\numberwithin{equation}{section}
\def\cB{\mathcal{B}}
\def\cC{\mathcal{C}}
\def\cD{\mathcal{D}}
\def\cF{\mathcal{F}}
\def\cH{\mathcal{H}}
\def\cL{\mathcal{L}}
\def\cS{\mathcal{S}}
\def\cW{\mathcal{W}}
\def\AA{\mathbb{A}}
\def\CC{\mathbb{C}}
\def\GG{\mathbb{G}}
\def\QQ{\mathbb{Q}}
\def\RR{\mathbb{R}}
\def\VV{\mathbb{V}}
\def\WW{\mathbb{W}}
\def\XX{\mathbb{X}}
\def\YY{\mathbb{Y}}
\def\ZZ{\mathbb{Z}}
\newcommand\bbC{\mathbb{C}}
\newcommand\bfG{\mathbf{G}}
\newcommand\bfH{\mathbf{H}}
\newcommand\bfM{\mathbf{M}}
\newcommand\bfN{\mathbf{N}}
\newcommand\bfP{\mathbf{P}}
\newcommand\bfU{\mathbf{U}}
\newcommand\bfV{\mathbf{V}}
\newcommand\bfW{\mathbf{W}}
\newcommand\bfY{\mathbf{Y}}
\newcommand\bfZ{\mathbf{Z}}
\newtheorem{theorem}{Theorem}[subsection]
\newtheorem{claim}[theorem]{Claim}
\def\fg{\mathfrak{g}}
\def\fk{\mathfrak{k}}
\def\fm{\mathfrak{m}}
\def\fn{\mathfrak{n}}
\def\fp{\mathfrak{p}}
\def\Ad{\mathrm{Ad}}
\def\diag{\mathrm{diag}}
\def\dim{\mathrm{dim}\hspace{0.1cm}}
\def\End{\mathrm{End}}
\def\Hom{\mathrm{Hom}}
\def\Gal{\mathrm{Gal}}
\def\GL{\mathrm{GL}}
\def\Lie{\mathrm{Lie}}
\def\Ind{\mathrm{Ind}}
\def\Im{\mathrm{Im}}
\def\Res{\mathrm{Res}}
\def\SL{\mathrm{SL}}
\def\SU{\mathrm{SU}}
\def\Sym{\mathrm{Sym}}
\def\Sp{\mathrm{Sp}}
\def\Stab{\mathrm{Stab}}
\def\tr{\mathrm{tr}}
\def\U{\mathrm{U}}
\def\linspan{\textnormal{-span}}
\renewcommand{\tilde}[1]{\widetilde{#1}}
\renewcommand{\hat}[1]{\widehat{#1}}
\def\fin{\mathrm{fin}}
\setlist[enumerate]{wide = 0pt, leftmargin=*}
\newtheorem{thm}{Theorem}[section]
\newtheorem{cor}[thm]{Corollary}
\newtheorem{prop}[thm]{Proposition}
\newtheorem{lemma}[thm]{Lemma}
\theoremstyle{definition}
\newtheorem{definition}[thm]{Definition}
\newtheorem{remark}[thm]{Remark}
\begin{document}

\title{Fourier Coefficients and Algebraic Cusp Forms on $\U(2,n)$}

\author{Anton Hilado}
\address{Department of Mathematics and Statistics, University of Vermont, Burlington, VT 05405, USA}
\email{anton.hilado@uvm.edu}

\author{Finn McGlade}
\address{Department of Mathematics, The University of Oklahoma, Norman, OK 73019, USA}
\email{finn.mcglade@ou.edu}

\author{Pan Yan}
\address{Department of Mathematics, The University of Arizona, Tucson, AZ 85721, USA}
\email{panyan@arizona.edu}


\date{\today}

\subjclass[2020]{Primary 11F03; Secondary 11F30}
\keywords{Quaternionic modular forms, Fourier coefficients, theta lifting, algebraicity}

\begin{abstract} 
We establish a theory of scalar Fourier coefficients for a class of non-holomorphic, automorphic forms on the unitary group $\mathrm{U}(2,n)$. By studying the theta lifts from holomorphic modular forms on $\mathrm{U}(1,1)$, we apply this theory to obtain examples of non-holomorphic cusp forms on $\U(2,n)$ whose Fourier coefficients are algebraic numbers. 
\end{abstract}

\maketitle

\goodbreak 

\tableofcontents

\goodbreak

\section{Introduction}
\label{section-introduction}
The Fourier coefficients of modular forms have served as a central object of study in number theory since the 19th century. Jacobi studied the Fourier coefficients of theta series in connection to the theory of quadratic forms \cite{Jacobi69}. Later, Siegel developed the Fourier expansion of holomorphic modular forms associated to the group of $2n$-by-$2n$ real symplectic matrices $\Sp(n)$. More recently, a certain class of \textit{quaternionic}\footnote{We use the word \textit{quaternionic} in the sense of \cite{GW96}.} modular forms on the split real Lie group $\mathrm{G}_2$ has been shown to have Fourier coefficients of arithmetic interest, see for example \cite{GGS02,LesliePollack22,pollack2023exceptional}. \\
\indent Let $n\in \ZZ_{\geq 1}$. The overarching goal of this paper is to study an analogous theory of Fourier coefficients for a class of modular forms on the unitary group $\U(2,n)$. These quaternionic modular forms on $\U(2,n)$ may be defined using the Schmid differentials $D_{\ell}^{\pm}$ associated to certain (continued) discrete series representations of $\U(2,n)$; for precise definitions of these operators, see Section~\ref{subsection-quaternionic-modular-forms-on-G}. Throughout this introduction we present our results using semi-classical notation, so that a weight $\ell\in \ZZ_{\geq 1}$ modular form $\varphi$ is, in particular, a function  on $\U(2,n)$ such that $D_{\ell}^{\pm}\varphi=0$.
\\ \indent Our first main theorem (Theorem \ref{thm1-intro}) is a multiplicity-at-most-one result for the generalized Whittaker spaces of certain quaternionic discrete series representation of $\U(2,n)$. In addition to giving a bound on the dimension of these generalized Whittaker space, Theorem \ref{thm1-intro} gives explicit formulas for the generalized Whittaker functions associated to the representation of $\U(2,n)$ of minimal $K$-type $\VV_{\ell}=\Sym^{\ell}\VV$. Here $\VV$ is a particular representation of a maximal compact subgroup $K_{\infty}$ of $\U(2,n)$. The results allow us to associate a set of scalar \textit{Fourier coefficients} $\{a_{\varphi}(T)\}_{T\geq 0}$ to a quaternionic modular form $\varphi$ on $\U(2,n)$.  These coefficients $a_{\varphi}(T)$ are indexed by vectors $T$ in the homogeneous cone of a rational hermitian space  $\bfV_0$ of signature $(1,n-1)$. \\
 \indent In our second main theorem (Theorem \ref{thm3-intro}), we construct quaternionic cusps forms $\varphi$ whose Fourier coefficients $a_{\varphi}(T)$ are algebraic numbers and not all equal to zero. The construction proceeds by theta lifting  of holomorphic cusp forms on $\U(1,1)$ using a specific choice of archimedean test data. As a byproduct of our analysis, we obtain formulas for the quaternionic cusp forms on $\U(2,n)$ which are obtained by considering theta lifts of holomorphic Poincar\'e series from $\U(1,1)$. \\
 \indent We now set up the notation necessary to state Theorem \ref{thm1-intro}. Suppose $V$ is a non-degenerate Hermitian space over $\CC$ of signature $(2,n)$.
 Let $P=M N$ denote the parabolic subgroup of $\U(2,n)$ stabilizing a fixed isotropic line $U\subseteq V$. The Levi factor $M\leq P$ is identified as $M= \U(V_0)\times \CC^{\times}$
   where $V_0$ is a fixed $n$-dimensional subspace of $V$ which has signature $(1,n-1)$ and is orthogonal to $U$. The unipotent radical $N$ is non-abelian with a one-dimensional center $Z=[N,N]$. We have an identification $V_0\xrightarrow{\sim} N^{\mathrm{ab}}$ (see \S \ref{The-Lie-algebras}) which we denote $w\mapsto \exp(w)$. 
The maximal compact $K_{\infty}$ stabilizes an orthogonal decomposition $V=V^+_2\oplus V^-_n$ where $V_2^+$ (resp. $V_n^-$) is a definite subspace of dimension $2$ (resp. $n$). Fix a unit vector $u_2\in V_0\cap V_2^+$ and let $u_1\in V_2^+$ be a second unit vector satisfying $\langle u_1, u_2\rangle=0$. Then $\{u_1^{\ell-v}u_2^{\ell+v} \colon v=-\ell, \ldots, \ell\}$ defines a basis for $\VV_{\ell}=(\Sym^{2\ell}V_2^+\otimes \det_{\U(2)}^{-\ell})\boxtimes \mathbf{1}$. Here $\mathbf{1}$ denotes the trivial representation of $\U(V_n^{-})$. Set $\beta_T\colon M\to \CC$ by $ \beta_T(h,z)=\frac{4\pi}{\sqrt{2}}\langle u_2, zT\cdot h\rangle$ where $h\in \U(V_0)$ and $z\in \CC^{\times}$ so that $(h,z)\in M=\U(V_0)\times \CC^{\times}$.
   \begin{thm}[Theorem~\ref{Theorem-Multiplicity-At-Most-1}]
\label{thm1-intro}
Fix $T\in V_0$ non-zero. Write 
$
\cC_{N,T}^{\mathrm{md}}(\U(2,n), \VV_{\ell})_{K_{\infty}}^{\cD_{\ell}=0}
$ to denote the space of smooth functions $\cW_{T}\colon\U(2,n)\to \VV_{\ell}$ satisfying:
\begin{compactenum}[(i)]
\item $\cW_T$ is of moderate growth. 
\item If $k\in K_{\infty}$ and $g\in \U(2,n)$, then $\cW_T(gk)=\cW_T(g)\cdot k$.
\item The functions $\cD_{\ell}^+\cW_{\chi}$ and $\cD_{\ell}^-\cW_{\chi}$ vanish identically on $\U(2,n)$. 
\item If $w\in V_0$, $u\in Z$, and $g\in \U(2,n)$ then 
$
\cW_T(\exp(w)ug)=e^{-2\pi i \Im(\langle T,w\rangle)}\cW_T(g).
$
\end{compactenum}
We have
\begin{equation}
\label{main-inequality}
\dim_{\CC}\left(\cC_{N,T}^{\mathrm{md}}(\U(2,n), \VV_{\ell})_{K_{\infty}}^{\cD_{\ell}=0}\right)
=
\begin{cases} 1, &\hbox{if $\langle T,T\rangle \geq 0$}, \\
0, &\hbox{if $\langle T,T\rangle< 0$.}
\end{cases}
\end{equation}
If $\langle T,T\rangle \geq 0$, then there exists a unique function $\cW_T\in \cC_{N,T}^{\mathrm{md}}(\U(2,n), \VV_{\ell})_{K_{\infty}}^{\cD_{\ell}=0}$ satisfying
\begin{equation*}
\small \cW_{T}(h,z):=\sum_{-\ell\leq v\leq \ell}|z|^{2\ell+2}\left(\frac{|\beta_T(h,z)|}{\beta_T(h,z)}\right)^vK_v\left(|\beta_T(h,z)|\right)\frac{u_1^{\ell-v}u_2^{\ell+v}}{(\ell-v)!(\ell+v)!}
\end{equation*}
for all $(h,z)\in M=\U(V_0)\times \CC^{\times}$.
Here $K_v$ is the Bessel function $K_v(x)=\frac{1}{2}\int_0^{\infty}t^{v-1}e^{-x(t+t^{-1})}dt$.
\end{thm}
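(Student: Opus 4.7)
The plan is to use the Iwasawa decomposition $\U(2,n) = NMK_\infty$ to reduce the problem to one on the Levi factor $M = \U(V_0) \times \CC^\times$. By (ii) and (iv), any $\cW_T\in \cC_{N,T}^{\mathrm{md}}(\U(2,n),\VV_\ell)_{K_\infty}^{\cD_\ell=0}$ is determined by its restriction to $M$ (subject to a compatibility with $M\cap K_\infty$), which we expand in the $K_\infty$-weight basis $\{u_1^{\ell-v}u_2^{\ell+v}\}$ as
\[
\cW_T(h,z) \;=\; \sum_{v=-\ell}^{\ell} f_v(h,z)\,\frac{u_1^{\ell-v}u_2^{\ell+v}}{(\ell-v)!(\ell+v)!}
\]
for unknown scalar functions $f_v \colon M \to \CC$. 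The goal is to show that the Schmid condition $\cD_\ell^\pm \cW_T = 0$ together with moderate growth pins the $f_v$ down uniquely up to one global constant.

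Next, I would compute $\cD_\ell^\pm$ explicitly in Iwasawa coordinates on $M$. These operators are built from right translation by root vectors of $\fn$ together with raising/lowering operators in $\fk_\infty^{\CC}$. Under the equivariance (iv), the derivatives along $N^{\mathrm{ab}}$ act on $\cW_T|_M$ as multiplication by the Whittaker character; after dualising the identification $V_0 \xrightarrow{\sim} N^{\mathrm{ab}}$, they manifest as multiplication by $\beta_T(h,z)$ and its complex conjugate. The remaining differentiations are along $\CC^\times \subseteq M$ and couple $f_v$ to $f_{v\pm 1}$, producing a finite coupled system of first-order PDEs whose $h$-dependence enters only through $\beta_T$. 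The $M\cap K_\infty$-compatibility together with the central character of the minimal $K$-type forces a prefactor $|z|^{2\ell+2}$, and phase-tracking $\beta_T$ motivates the ansatz
\[
f_v(h,z) \;=\; |z|^{2\ell+2}\,\Bigl(\tfrac{|\beta_T(h,z)|}{\beta_T(h,z)}\Bigr)^{v}\, g_v\bigl(|\beta_T(h,z)|\bigr),
\]
which reduces each equation $\cD_\ell^\pm \cW_T = 0$ to the modified Bessel equation of order $v$ for $g_v$, with coupling governed by the standard three-term recursion.

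At this stage the moderate growth condition (i) eliminates the exponentially increasing solution $I_v$, forcing $g_v = c_v K_v$, and the Bessel recursion equates all the $c_v$. This simultaneously proves the multiplicity-at-most-one bound and, after one verifies that the candidate function genuinely lies in the space, yields the stated explicit formula.

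The remaining case $\langle T,T\rangle < 0$ is handled by revisiting the same reduction and analysing the $\U(V_0)$-orbit of $T$: the locus where $\beta_T(\cdot,z)$ or the Bessel argument degenerates is positioned so that no non-zero function of the above form can simultaneously satisfy the Schmid system, the Whittaker equivariance, and moderate growth; the unique decaying candidate fails either the smoothness or the $\cD_\ell^\pm = 0$ requirement across the degeneracy locus. I expect the principal technical obstacle to be the explicit coordinate computation of $\cD_\ell^\pm$ on the weight-basis expansion and verifying that the reduced system really is the clean modified Bessel recursion in the argument $|\beta_T|$; once that is established, the uniqueness statement, the explicit formula, and the sign dichotomy all follow from classical special-function theory.
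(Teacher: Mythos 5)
Your proposal is correct and follows essentially the same route as the paper: restrict to $M$ via the Iwasawa decomposition, expand in the weight basis, turn $\cD_\ell^{\pm}\cW_T\rvert_M\equiv 0$ into a coupled first-order system in which the $N$-directions act through $\beta_T$, solve the resulting modified Bessel equation with moderate growth discarding $I_v$, tie the constants together by the three-term recursion (and the $\U(V_0)$-direction equations), and kill the $\langle T,T\rangle<0$ case by the blow-up of $K_v$ along the vanishing locus of $\beta_T$. The only points you leave implicit --- that the $\U(V_0)$-derivatives force the residual constant to be independent of $h$, and that in the negative case it is continuity of $\cW_T$ across $\{\beta_T=0\}$ that fails --- are exactly the computations the paper carries out, so no new idea is needed.
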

As a corollary to Theorem \ref{thm1-intro} we obtain a refined Fourier expansion for modular forms on $\U(2,n)$. 
\begin{cor}[Corollary~\ref{cor1}]
\label{corollary-intro1}
Suppose $\varphi$ is a modular form on $\U(2,n)$ satisfying $\cD_{\ell}^\pm \varphi \equiv 0$ and let $\varphi_N$ (resp. $\varphi_Z$) denote the constant term of $\varphi$ along $N$ (resp. $Z$). Then there exists a set of Fourier coefficients $\{a_{\varphi}(T)\in \CC\}_{T\in {V}_0}$ such that if $g\in \U(2,n)$ then
     \begin{equation}
     \label{eqn-intro-FE-QMF}
     \varphi_Z(g)=\varphi_N(g)+\sum_{T\in V_0-\{0\}\colon \langle T,T\rangle\geq 0 }a_{\varphi}(T)\cW_{T}(g). 
     \end{equation}
     Moreover, if $\varphi$ is cuspidal then \eqref{eqn-intro-FE-QMF} takes the form 
     \begin{equation}
         \label{eqn-intro-FE-CuspidalQMF}
         \varphi_Z(g)=\sum_{T\in V_0\colon \langle T,T\rangle> 0 }a_{\varphi}(T)\cW_{T}(g). 
     \end{equation}
\end{cor}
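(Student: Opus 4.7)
The plan is to obtain \eqref{eqn-intro-FE-QMF} by applying Theorem~\ref{thm1-intro} to each $N^{\mathrm{ab}}$-isotypic component of $\varphi_Z$, and then to derive the cuspidality characterization by relating the isotropic Fourier coefficients to constant terms along the other proper rational parabolics of $\U(2,n)$.

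First, since $\varphi_Z$ is $Z$-invariant on the left and (by automorphy) also left-invariant under a rational lattice $\Lambda\subset V_0\cong N^{\mathrm{ab}}$, Fourier analysis on $\Lambda\backslash V_0$ yields
\[
\varphi_Z(g) = \varphi_N(g) + \sum_{T\in \bfV_0-\{0\}} \varphi_{Z,T}(g), \qquad \varphi_{Z,T}(g) := \int_{\Lambda\backslash V_0}\varphi_Z(\exp(w)g)\,e^{2\pi i\Im\langle T,w\rangle}\, dw,
\]
with $\varphi_{Z,T}\equiv 0$ for $T$ outside the dual lattice $\Lambda^{\vee}$. I would then verify that each nonzero $\varphi_{Z,T}$ belongs to $\cC_{N,T}^{\mathrm{md}}(\U(2,n),\VV_\ell)_{K_\infty}^{\cD_\ell=0}$: the character property (iv) is immediate from the definition of the Fourier projection; the $K_\infty$-equivariance (ii) and the Schmid differential equations (iii) are preserved because the right $K_\infty$-action and the left-$\U(2,n)$-invariant operators $\cD_\ell^{\pm}$ both commute with projection onto the $T$-isotypic component of $N^{\mathrm{ab}}$; moderate growth (i) is inherited from $\varphi$. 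Invoking Theorem~\ref{thm1-intro} then forces $\varphi_{Z,T}=0$ whenever $\langle T,T\rangle<0$, and $\varphi_{Z,T}=a_\varphi(T)\cW_T$ for a unique $a_\varphi(T)\in\CC$ whenever $\langle T,T\rangle\geq 0$. Declaring $a_\varphi(T):=0$ on the remaining rational $T$ produces \eqref{eqn-intro-FE-QMF}.

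For the cuspidality equivalence, if $\varphi$ is cuspidal then the vanishing of the constant term along $P$ yields $\varphi_N\equiv 0$. To eliminate the remaining isotropic coefficients I would use that every rational isotropic vector $T\in\bfV_0$, together with a generator of the fixed isotropic line $U$, spans a rational isotropic $2$-plane of $V$, thus defining a distinct maximal rational parabolic $Q$ of $\U(2,n)$ whose unipotent radical $U_Q$ introduces an additional abelian direction beyond $N$. Expressing the vanishing constant term of $\varphi$ along $U_Q$ in terms of the Fourier expansion established above identifies it as a linear combination of Whittaker functions $\cW_{T'}$ indexed by an isotropic $\U(V_0)(\QQ)$-orbit; the uniqueness part of Theorem~\ref{thm1-intro} then forces $a_\varphi(T)=0$ for all isotropic $T$. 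Conversely, since $\U(2,n)$ has $\QQ$-rank at most $2$, every proper rational parabolic is conjugate to $P$, $Q$, or the Borel $B=P\cap Q$; writing the corresponding constant terms as integrals of $\{a_\varphi(T)\}$ over the appropriate lower-dimensional loci and noting that only $\varphi_N$ and the isotropic coefficients contribute shows that hypothesis \eqref{eqn-intro-FE-CuspidalQMF} suffices for cuspidality.

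The main obstacle I anticipate is the unfolding step for the cuspidality direction: identifying the constant term of $\varphi$ along $U_Q$ as a sum of Whittaker functions $\cW_{T'}$ supported on isotropic $T'$, cleanly enough that the uniqueness from Theorem~\ref{thm1-intro} can be applied pointwise to extract the vanishing of each individual $a_\varphi(T)$. This requires a careful setup of the quotient $U_Q/(U_Q\cap N)$ together with its pairing against $\Lambda^{\vee}$ via the Hermitian form, which in turn depends on the rational isotropy structure of $\bfV_0$.
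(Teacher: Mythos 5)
The first half of your argument (the expansion \eqref{eqn-intro-FE-QMF}) is essentially the paper's: Fourier-analyze $\varphi_Z$ along $N^{\mathrm{ab}}$, check that each nonzero isotypic piece lies in $\cC_{N,T}^{\mathrm{md}}(\U(2,n),\VV_{\ell})_{K_{\infty}}^{\cD_{\ell}=0}$, and invoke Theorem~\ref{thm1-intro}; the paper treats this as standard and cites Bump. The problem is the cuspidality direction, specifically the vanishing of $a_{\varphi}(T)$ for nonzero isotropic $T$. Your plan --- relate these coefficients to the constant term of $\varphi$ along the unipotent radical $U_Q$ of the parabolic attached to the isotropic $2$-plane spanned by $U$ and $T$ --- is exactly the step you flag as the ``main obstacle,'' and it is a genuine gap, not a technicality. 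Cuspidality only gives you the vanishing of the integral over \emph{all} of $U_Q$, which includes unipotent directions outside $N$; after substituting the expansion \eqref{eqn-intro-FE-QMF}, the resulting integrals of the degenerate Whittaker functions $\cW_{T'}$ over those extra directions need not converge, and could vanish identically, in which case the identity $\varphi_{U_Q}\equiv 0$ carries no information about the individual $a_{\varphi}(T)$. Even if one only integrates over $U_Q\cap N$, one obtains the vanishing of a \emph{sum} over the isotropic line through $T$ of further-integrated terms, and the uniqueness statement of Theorem~\ref{thm1-intro} (which lives in a fixed character space for $N$) does not by itself separate those contributions. None of this is carried out in your proposal, so the key implication ``cuspidal $\Rightarrow$ isotropic coefficients vanish'' is not established.

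The paper's argument for this step is much shorter and avoids other parabolics entirely: since $\varphi$ is a cusp form it is bounded, and the Fourier coefficient $\int_{N(\ZZ)\backslash N}\varphi(ng)\overline{\chi_{T}(n)}\,dn = a_{\varphi}(T)\cW_{T}(g)$ is an integral of a bounded function over a compact set, hence bounded in $g$. But for nonzero isotropic $T$ the explicit formula \eqref{Main-Formula-Mult-One} shows $\cW_{T}$ is unbounded on $M$: the Bessel functions $K_v(x)$ blow up as $x\to 0^{+}$, and when $\langle T,T\rangle = 0$ the quantity $|\beta_T(h,z)| = \tfrac{4\pi}{\sqrt{2}}|\langle u_2, zT\cdot h\rangle|$ is not bounded away from zero as $h$ ranges over $\U(V_0)$. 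Boundedness of the product then forces $a_{\varphi}(T)=0$. You should either supply the missing unfolding along $U_Q$ in full (including convergence and the separation of the coefficients on the line $ET$), or replace that portion of your argument with this growth comparison. (Note also that the paper's proof only addresses the forward implication; your sketch of the converse via the classification of rational parabolics is plausible, but the forward direction is where the substance lies.)
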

Our proof of Theorem \ref{thm1-intro} is adapted from \cite[Theorem 1.2.1]{Pollack21a} where the analogous multiplicity-at-most-one statement is obtained for modular forms on the quaternionic linear groups in Dynkin types $G_2, F_4, E_6, E_7$, $E_8$, $B_m$, and $D_{m+1}$ for $m\geq 3$. It relies on an analysis of the equations $D_{\ell}^{\pm}\varphi\equiv 0$ which is essentially carried out for the group $\SU(2,1)$ in \cite[Theorem 4.5]{KT95}. An analysis of the system $D_{\ell}^{\pm}\varphi\equiv 0$  in the case of $\SU(2,2)$ is made in \cite[\S 5 and \S 6]{Yamashita1991II}. Theorem \ref{thm1-intro} is closely related to a statement in representation theory. Suppose $\ell\geq n$ or $1\leq \lfloor \frac{n-1}{2}\rfloor\leq n<\ell$, and let $\Pi_{\ell}$ denote the $\U(2,n)$-representations with minimal $K$-type $\VV_{\ell}$ constructed in \cite{GW96}. The functions $\cW_T$ may occur as (generalized) Whittaker functions associated to $\Pi_{\ell}$. When $\Pi_{\ell}$ is discrete series and either $\langle T,T\rangle >0$ or $\langle T,T\rangle <0$, the formula \eqref{main-inequality} partially implies the type $A$ cases of a multiplicity-one result due to Wallach \cite[Theorem 16]{Wallach03}. Furthermore, there is a sense in which Theorem \ref{thm1-intro} extends Wallach's result to give a multiplicity-at-most-one statement in the cases when $\Pi_{\ell}$ is continued discrete series or when $T$ is non-zero and isotropic. We refer the reader to Remark \ref{rmk2-qmfs} for further details regarding the relationship between Theorem \ref{thm1-intro} and \cite[Theorem 16]{Wallach03}
\\ 
\indent 
We now move on to discuss Theorem \ref{thm2-intro} which gives examples of quaternionic modular forms on $\U(2,n)$. Suppose $W$ is a complex skew hermitian space of signature $(1,1)$ and let $\U(1,1)$ denote the isometry group of $W$. We take an isotropic basis $\{w_+,w_-\}$ of $W$ such that $\langle w_+,w_-\rangle_W=1$  where $\langle \,,\,\rangle_W$ denotes the skew-hermitian form on $W$. Writing elements of $\U(1,1)$ as matrices relative to the basis $\{w_+, w_-\}$ gives an identification $\U(1,1)=\{zg\colon z\in \CC^1, g\in \SL_2(\RR)\}$. Hence $\U(1,1)$ acts on the upper half plane $\cH_{1,1}=\{z\in \CC \colon \mathrm{Im}(z)>0\}$ by fractional linear transformations. Given an cuspidal automorphic function $\xi$ on $\U(1,1)$, the theory of the  Weil representation can be used to define certain theta liftings $\theta_{\psi,\chi}(\xi,\phi)$ of $\xi$ to the space of automorphic forms on $\U(2,n)$. In more detail, the Cartesian product $\U(1,1)\times \U(2,n)$ supports a special family of automorphic forms $\{\theta_{\psi,\chi}(\phi)\}$ known as \textit{theta series}. These theta series are indexed by triples $(\psi,\phi,\chi)$ consisting of Hecke character $\chi$, a finite adelic Schwartz function $\phi$, and an additive character $\psi$. We refer the reader to subsection \ref{subsection-The-metaplectic-group} for details. The lifting $\theta_{\psi,\chi}(\xi,\phi)$ is defined as the Petersson inner product of $\xi$ and $\theta_{\psi,\chi}(\phi)$. Theorem \ref{thm2-intro} describes the modular forms on $\U(2,n)$ which arise as theta lifts from $\U(1,1)$. 
\begin{thm}[Theorem~\ref{prop-relation-of-Fourier-coefficients}]
\label{thm2-intro}
Suppose $\Gamma\leq \U(1,1)$ is an arithmetic subgroup such that $\Gamma\backslash \cH_{1,1}$ is non-compact.
Suppose $\ell>n+1$ and let $\xi_f\colon \Gamma\backslash \U(1,1)\to \CC$ be the automorphic function associated to a holomorphic weight $2\ell+2-n$ modular form $f(\tau)=\sum_{t\in \QQ_{>0}}b_f(t)e^{2\pi i \tau t}$ on $\cH_{1,1}$. Assume $\xi_f(zg)=z^{n+2}\xi_f(g)$ for all $g\in \SL_2(\RR)$ and $z\in \CC^1$.  
\begin{compactenum}[(a)]
    \item There exists a triple $(\psi_0,\phi_0,\chi_0)$ such that the theta lift $\theta(\xi_f,\phi_0):=\theta_{\psi_0,\chi_0}(\xi_f,\phi_0)$ is a non-zero weight $\ell$ cuspidal quaternionic modular form on $\U(2,n)$.
    \item The constant term $\theta(\xi_f,\phi_0)_Z$ is non-zero and has Fourier expansion 
$$
\theta(\xi_f, \phi_0)_Z(g)=\sum_{T\in V_0\colon \langle T,T\rangle> 0 }a_{\theta(\xi_f, \phi_0)}(T)\cW_{T}(g)
$$
where each $a_{\theta(\overline{\xi}_f, \phi_0)}(T)$ is a finite linear combination of the Fourier coefficients $b_f(t)$.
\end{compactenum}
\end{thm}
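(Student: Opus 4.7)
The plan is to break the theorem into the standard two stages of a theta-lifting argument: first construct $(\psi_0,\phi_0,\chi_0)$ so that the resulting lift lands in the correct $K_\infty$-isotypic subspace and is annihilated by the Schmid operators, and then unfold the integral defining the Fourier coefficients of the lift to read off the formula in (b), with non-vanishing falling out as a corollary of the explicit unfolded expression.

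For part (a), I would first fix the split datum at the finite places by picking a Hecke character $\chi_0$ satisfying the usual unitary-group splitting condition compatible with $\psi_0$, and choose $\phi_{0,f}$ to be a Schwartz function on the finite adelic Weil-representation space whose support is concentrated on a suitable lattice (this plays no essential role in the archimedean analysis but is needed to ensure convergence and to make the lift non-trivial). The archimedean piece $\phi_{0,\infty}$ is the crucial choice. I would select it as a polynomial times Gaussian vector on which the maximal compact of $\U(1,1)\times \U(2,n)$ acts by the character giving the holomorphic weight $2\ell+2-n$ on the $\U(1,1)$-side and the minimal $K_\infty$-type $\VV_\ell=\Sym^{2\ell}V_2^+\otimes\det_{\U(2)}^{-\ell}$ on the $\U(2,n)$-side; this is the standard highest-weight / quaternionic joint harmonic vector in the Fock model. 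Then $\theta_{\psi_0,\chi_0}(\phi_0)$ takes values in $\VV_{2\ell+2-n}\otimes \VV_\ell$, so pairing with the holomorphic $\xi_f$ via the Petersson integral produces a $\VV_\ell$-valued function on $\U(2,n)$. The Schmid equations $D_\ell^{\pm}\theta(\xi_f,\phi_0)=0$ then follow by differentiating under the integral and translating the action of the corresponding root vectors to the Weil-representation side, where they annihilate $\phi_{0,\infty}$ by design; this is the archimedean calculation that I expect to be the main technical obstacle, since one must verify exactly which negative-root differential operators kill the chosen joint Schr\"odinger / Fock vector. Cuspidality is immediate from cuspidality of $\xi_f$ together with the fact that the relevant dual-pair tower has $\U(1,1)$ as its first non-trivial stage (so there is no smaller tower on which a non-trivial earlier lift could obstruct cuspidality of $\theta(\xi_f,\phi_0)$ on $\U(2,n)$).

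For part (b), I would unfold the integral defining the constant term along $Z=[N,N]$. Because $\theta_{\psi_0,\chi_0}(\phi_0)$ is a sum over pairs $(x,y)$ of vectors in $W\otimes V$, the $Z$-constant term picks out a restricted sum which, after applying Poisson summation in the $N^{\mathrm{ab}}=V_0$-direction, decomposes according to characters $\psi(\langle T,\cdot\rangle)$ indexed by $T\in \mathbf{V}_0$. Pairing with $\xi_f$ and recognizing the result as an automorphic form on $\U(1,1)$ tested against $f$, each $T$-isotypic component becomes a sum of products $\phi_{0,f}(\cdot)\cdot b_f(t)$ over finitely many $t\in \QQ_{>0}$ determined by $T$ through the moment map of the dual pair, with archimedean factor equal to the integral of $f$ against the $\psi_t$-Whittaker function of the theta kernel. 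Combined with the uniqueness established in Theorem~\ref{thm1-intro}, the archimedean integral must coincide (up to a scalar independent of $f$) with the Bessel-type function $\cW_T$, forcing the formula in (b) and producing $a_{\theta(\xi_f,\phi_0)}(T)$ as the asserted finite linear combination of the $b_f(t)$.

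Non-vanishing, needed to complete (a), I would obtain as a byproduct of the unfolding: choose $\phi_{0,f}$ so that some explicit Fourier coefficient $a_{\theta(\xi_f,\phi_0)}(T_0)$ equals a single non-zero scalar multiple of a single $b_f(t_0)$, and choose $f$ so that $b_f(t_0)\neq 0$. By Corollary~\ref{corollary-intro1}, a non-zero Fourier coefficient forces $\theta(\xi_f,\phi_0)$ itself to be non-zero, completing (a). The step I expect to be most delicate is the archimedean verification that the chosen joint highest-weight vector in the Fock model is simultaneously annihilated by the appropriate $\overline{\mathfrak{p}}^{\pm}$-root vectors on the $\U(2,n)$-side translating to the Schmid operators $D_\ell^{\pm}$; the remaining steps are unfolding and bookkeeping in the spirit of Rallis' classical computations, but the Schmid-operator compatibility is the genuinely new archimedean input that ties the construction to Theorem~\ref{thm1-intro}.
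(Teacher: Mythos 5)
Your overall architecture --- a polynomial-times-Gaussian archimedean vector realizing the joint $K$-types, unfolding of the theta integral along $\bfZ$ and $\bfN$ to compute Fourier coefficients, identification of the archimedean factor with $\cW_T$ via Theorem~\ref{Theorem-Multiplicity-At-Most-1}, and nonvanishing by a choice of finite test data --- is the same as the paper's, but the step you defer is exactly where the content lies, and your mechanism for it is not correct as stated. You claim the Schmid operators, transported to the Weil representation, ``annihilate $\phi_{0,\infty}$ by design.'' They do not annihilate the kernel pointwise: for fixed $v$ the function $g\mapsto Q_\ell(vg)e^{-2\pi\|vg\|^2}$ is not a solution of the Schmid equations, and the annihilation only emerges after integrating over the torus of $\U(1,1)$, which converts the Gaussian into the homogeneous function $B_{\ell,v}(g)=Q_{\ell}(vg)/\Vert\mathrm{pr}_{2}(vg)\Vert^{4\ell+2}$ (Proposition~\ref{prop-archimedean-integral}). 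The paper's proof of (a) then rests on the explicit verification $\cD_{\ell}^{\pm}B_{\ell,v}\equiv 0$ (Theorem~\ref{thm-quaternionicity-B_l,v}), applied to lifts of holomorphic Poincar\'e series, which span the cusp forms. Your alternative route --- reading quaternionicity off the joint harmonics in the Fock model, i.e.\ essentially Howe duality plus Li's determination of the archimedean correspondence \cite{Li1990} --- is viable and is acknowledged in the introduction, but it requires the representation-theoretic input that the archimedean component of the lift is $\Pi_{\ell}$ (and the vanishing happens only after the $K$-type projection, not on the raw $\fp^{\mp}$-action); as written this is a gap, not a proof.

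Your cuspidality argument is a genuine error: the obstruction to cuspidality of a lift to $\U(\bfV)$ lives in the Witt tower of the \emph{target} space, and since $\bfV$ of signature $(2,n)$ contains the hyperbolic plane $\bfU\oplus\bfU^{\vee}$, that tower does contain the smaller space $\bfV_0$ of signature $(1,n-1)$; non-vanishing of the lift of $\xi_f$ to $\U(\bfV_0)$ is precisely what could make $\theta(\xi_f,\phi_0)$ fail to be cuspidal, so ``no smaller stage'' is false and cuspidality is not immediate from cuspidality of $\xi_f$. The paper instead proves square-integrability and quaternionicity (discrete series at infinity when $\ell\geq n$) and invokes Wallach's criterion \cite{Wallach84}; if you prefer the tower argument you must additionally show the lift to $\U(1,n-1)$ vanishes. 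Finally, a smaller but real point for (b) and for your nonvanishing-by-one-coefficient plan: the proportionality constant between the unfolded archimedean integral and $\cW_T$ must be shown to be nonzero (and, for the clean expansion, independent of $T$); this does not follow from multiplicity one alone and occupies Lemma~\ref{main-technical-lemma} and Proposition~\ref{prop-Fourier-transform-A_l} in the paper, while the finiteness of the finite-adelic integral giving the ``finite linear combination of the $b_f(t)$'' is the content of the argument in Theorem~\ref{thm-algebraicity}.
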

Statement (a) of Theorem \ref{thm2-intro} can be deduced from the literature. For example, 
a duality theorem of R. Howe \cite{Howe89} together with the explicit determination of the local archimedean theta correspondence given in \cite{Li1990} implies the existence of $(\psi_0,\phi_0,\chi_0)$ such that $\theta(\xi_f, \phi_0)$ is a quaternionic modular form. For $n\geq 2$, the non-vanishing of $\theta(\xi_f, \phi_0)$ follows by an application of the tower property for unitary dual pairs (see for example \cite{Wu2013}), and the cuspidality can be deduced from \cite[Theorem 4.3]{Wallach84}. In this paper we give a more explicit function theoretic approach to Theorem \ref{thm2-intro}. For example, in Theorem \ref{thm-quaternionicity-B_l,v} we show that $\theta(\xi_f, \phi_0)$ is a quaternionic modular form on $\U(2,n)$ by explicitly verifying that $D_{\ell}^{\pm}\theta(P(\cdot,\mu_{-t}), \phi_0)\equiv 0$ as $P(\cdot,\mu_{-t})$ ranges over the space of weight $2\ell+2-n$ holomorphic Poincar\'e series. As a byproduct of this analysis, we obtain an explicit family of functions $\{B_{\ell,T}\colon \U(2,n)\to \VV_{\ell}\colon T\in V, \langle T,T\rangle>0\}$ satisfying $D_{\ell}^{\pm}B_{\ell,T}\equiv 0$. The functions $B_{\ell,T}$ furnish us with  integral representations for the functions $\cW_T$ of Theorem \ref{thm1-intro} (see \eqref{intergral-rep-of-W_v}). Our final theorem uses this integral representation of $\cW_T$, to show that the theta lifting from $\U(1,1)$ to $\U(2,n)$ preserves algebraicity of Fourier coefficients in the following sense. 
\begin{thm}[Theorem~\ref{thm-algebraicity}]
\label{thm3-intro}
    Let notation be as in Theorem \ref{thm2-intro}. Assume $L/\QQ$ is an algebraic extension such that $b_f(t)\in L$ for all $t\in \QQ_{>0}$. Let $L(\mu_{\infty})/L$ denote the extension obtained  by adjoining all roots of unity to $L$. If $T\in V_0$ satisfies $\langle T,T\rangle>0$ then $a_{\theta(\xi_f,\phi_0)}(T)\in L(\mu_{\infty})$. 
\end{thm}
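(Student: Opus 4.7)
The plan is to reduce Theorem \ref{thm3-intro} to an algebraicity statement about the coefficients in the explicit linear relation already provided by Theorem \ref{thm2-intro}(b). That theorem gives
\[
a_{\theta(\xi_f,\phi_0)}(T) \;=\; \sum_{t\in \QQ_{>0}} c(T,t)\, b_f(t)
\]
as a finite sum; since $b_f(t)\in L$ for every $t$, it suffices to show $c(T,t)\in \QQ(\mu_\infty)$ for every pair $(T,t)$, as then the sum lies in $L\cdot \QQ(\mu_\infty)=L(\mu_\infty)$.

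To access the coefficients $c(T,t)$ concretely, I would first decompose $\xi_f$ spectrally as a convergent linear combination of the automorphic lifts of holomorphic weight $2\ell+2-n$ Poincar\'e series $P(\cdot,\mu_{-t})$, and use continuity of the theta lifting to reduce to the case of Poincar\'e-series input. For such input, Theorem \ref{thm-quaternionicity-B_l,v} expresses $\theta(\xi_{P(\cdot,\mu_{-t})},\phi_0)$ explicitly as a finite sum of translates of the functions $B_{\ell,S}$, and the integral representation \eqref{intergral-rep-of-W_v} converts this further into a sum of translates of the Whittaker functions $\cW_S$. Comparing the resulting expansion with Corollary \ref{corollary-intro1} identifies $c(T,t)$ as a product of three factors: the value at a rational vector of the finite-adelic Schwartz function $\phi_0$, the value at a finite id\`ele of the Hecke character $\chi_0$, and an archimedean normalizing constant arising from matching $B_{\ell,T}$ against $\cW_T$.

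Each of these three factors is algebraically controlled. The function $\phi_0$ may be taken to be the characteristic function of a lattice coset, hence $\QQ$-valued; the character $\chi_0$ is of finite order by construction, so all its finite-id\`ele values lie in $\mu_\infty$. The main obstacle, and the only step requiring substantive computation, is to verify that the archimedean normalization is rational. This amounts to checking that the archimedean integral pairing the holomorphic weight $2\ell+2-n$ lowest-weight vector on $\U(1,1)$ against $B_{\ell,T}$ reduces, after cancellation of a standard $\Gamma$-factor, to a rational multiple of the Bessel integral $K_v$ already built into the formula for $\cW_T$ in Theorem \ref{thm1-intro}. This is precisely the content of the archimedean calculation underlying Theorem \ref{thm-quaternionicity-B_l,v}: the Schwartz datum $\phi_{0,\infty}$ was chosen so that the exponential-Bessel shape of $B_{\ell,T}$ matches the archimedean theta integral on the nose, leaving only a rational normalizing scalar. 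Combining the three factors gives $c(T,t)\in \QQ(\mu_\infty)$, and hence $a_{\theta(\xi_f,\phi_0)}(T)\in L(\mu_\infty)$, as required.
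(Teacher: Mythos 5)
Your reduction ``it suffices to show the coefficients $c(T,t)$ lie in $\QQ(\mu_\infty)$'' is reasonable in spirit, but the route you take to those coefficients is not the paper's and contains genuine gaps. The paper never passes through a spectral decomposition of $\xi_f$ into Poincar\'e series for the algebraicity statement: Poincar\'e series are used only to prove the archimedean annihilation $\cD_\ell^{\pm}B_{\ell,v}\equiv 0$ and cuspidality/nonvanishing. For the Fourier coefficients it works with an arbitrary holomorphic cusp form $f$ directly, via Proposition \ref{prop-Fourier-coefficient-of-theta-lift}, which expresses the $\eta_T$-coefficient of $\theta(f,\phi)$ as an integral of $\omega(g,h)\phi(z_T)$ against $f_{-\langle T,T\rangle}(h)$; this factors into an archimedean integral and the finite-adelic integral \eqref{eq-Fourier-coefficient-nondegenerate}. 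The entire content of the paper's proof of Theorem \ref{thm-algebraicity} is then a \emph{finiteness} argument you omit: using the Iwasawa decomposition, right invariance under a compact open subgroup, and the lattice support of $\phi_\fin$ (which forces $a,\overline{a}^{-1}\in t^{-1}\hat{\cO}_E$ for the contributing $\diag(a,\overline{a}^{-1})\in \bfM_{\mathbb{Y}}(\AA_\fin)$), the integral over $\bfN_{\mathbb{Y}}(\AA_\fin)\backslash\bfH(\AA_\fin)$ collapses to a finite sum whose terms are products of values of $\chi$, rational volumes and norms, values of a characteristic function, and the coefficients $a_f(t)(h)\in L$. Your assertion that $c(T,t)$ ``is a product of three factors'' presupposes exactly this collapse and never proves it; and your detour through Poincar\'e series makes matters worse, since the coefficients of $\xi_f$ in such a decomposition are Petersson inner products, i.e.\ $b_f(t)$ multiplied by $\Gamma$-factors and powers of $\pi$, transcendental multiples that your argument never cancels.

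Two of your three ``factors'' are also misjudged. The character $\chi_0$ cannot be of finite order: its restriction to $\AA^{\times}$ is the quadratic character attached to $E/\QQ$, whose archimedean component is the sign character of $\RR^{\times}$, while a finite-order continuous character of the connected group $\CC^{\times}$ is trivial; so controlling the finite-id\`ele values of $\chi_0$ requires more than ``roots of unity.'' And the rationality of the archimedean normalization is not ``the content of Theorem \ref{thm-quaternionicity-B_l,v}'': that theorem only gives $\cD_\ell^{\pm}B_{\ell,v}\equiv 0$, and the archimedean constants actually occurring (the Gaussian integral in Proposition \ref{prop-archimedean-integral} and the constant $C_{u_2,\ell}$ of Lemma \ref{main-technical-lemma}) are never shown to be algebraic, and presumably are not. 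The paper's essential device, which your proposal lacks and needs, is Proposition \ref{prop-Fourier-transform-A_l}: the constant $C_T$ relating the archimedean integral to $\cW_{-iT}$ is nonzero and \emph{independent of $T$}, so it is a single overall normalization; after absorbing it, the Fourier coefficients are literally the finite-adelic quantities \eqref{eq-Fourier-coefficient-nondegenerate}, and algebraicity becomes the purely nonarchimedean finiteness statement described above. Without the $T$-independence of $C_T$ (or an honest proof that your archimedean scalar is rational), your argument does not close.
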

We end the introduction by giving a brief outline of the structure of our paper. In \S\ref{section-preliminaries} we define a $\QQ$-rational form of $\U(2,n)$, which is denoted $\bfG$, and review several pieces of structure theory pertaining to $\bfG$. The explicit forms of the Schmid differential operators are given in \S\ref{The-Fourier-Expansion-of-QMFS-on-G}, as is the proof of Theorem \ref{thm1-intro}. After recalling some generalities regarding the theory of the theta correspondence in \S\ref{section-generalities-theta-correspondence}, we dedicate \S\ref{section-theta-lifting} to studying the quaternionic modular forms on $\bfG$ obtained as theta lifts from $\U(1,1)$. In particular, \S \ref{section-theta-lifting} contains a conditional proof of Theorem \ref{thm2-intro}. In \S\ref{section-Fourier-coefficients-of-theta-lift} we complete the proof of Theorem \ref{thm2-intro} and establish the algebraicity statement Theorem \ref{thm3-intro}.

\subsection*{Acknowledgements}
This work was initiated as part of a project group during the Arizona Winter School 2022. The authors thank The Southwest Center for Arithmetic Geometry, as this work would not have been possible if not for their stimulating winter school. The authors thank Aaron Pollack and Spencer Leslie for their guidance and support throughout the course of this work. Thanks are due to Jon Aycock, Poornima Belvotagi, Eric Chen, Chung-Hang Kwan, Nhat Hoang Le, Alice Lin, Angus McAndrew, Gyujin Oh, Alexander Schlesinger,  Shenrong Wang, and Katharine Woo for helpful conversations during the initial stages of this work. The authors thank the referee for the careful reading and helpful suggestions. 
PY is partially supported by an AMS-Simons Travel Grant.

\section{Preliminaries on the Group $\bfG$}
\label{section-preliminaries}

\subsection{Hermitian Spaces}
Fix an imaginary quadratic extension $E/\mathbb{Q}$. We regard $E$ as a subfield of $\CC$ via a fixed embedding $E\hookrightarrow \CC$. Write $\AA_E$ to denote the adele ring of $E$. The adele ring of $\QQ$ is denoted by $\AA=\AA_\QQ$. Let $x\mapsto \overline{x}$ be the non-trivial element of $\Gal(E/\QQ)$ and write $\tr_{E/\QQ}(x)=x+\overline{x}$ for the trace map from $E$ to $\QQ$. Let $\psi_{\QQ}=\psi=\prod_{v\leq \infty}\psi_v$ denote the standard additive character of $\QQ\backslash \AA$. So for $p<\infty$ , $\psi_p$ is the additive character of $\QQ_p$ of conductor $\ZZ_p$ and $\psi_{\infty}(x)=e^{2\pi i x}$.  Define $\psi_E\colon E\backslash \AA_E\to \CC$ by $\psi_E(x)=\psi_\QQ(\frac{1}{2}\tr_{E/\mathbb{Q}}(x))$.

Fix $n\in \ZZ_{\geq 1}$, and let $\mathbf{V}$ be a non-degenerate hermitian space over $E$ of signature $(2,n)$. Write $\langle \cdot ,\cdot \rangle$ for the hermitian form on $\mathbf{V}$ with the convention that $\langle\cdot, \cdot \rangle$ is conjugate linear in the second variable. The Hasse principle implies that $\bfV$ contains an isotropic line $\bfU$. Let $\bfU^{\vee}\subseteq \bfV$ denote a second isotropic line such that $\langle \bfU, \bfU^{\vee}\rangle\neq \{0\}$. Fix $b_1\in \bfU$ and $b_2\in \bfU^{\vee}$ satisfying $\langle b_1,b_2\rangle=1$ and consider the elements in $V:=\bfV\otimes_{\QQ}\RR$ given by $u_1=\frac{1}{\sqrt{2}}(b_1+b_2)$ and $v_n=\frac{1}{\sqrt{2}}(b_1-b_2)$. Define $\bfV_0=(\bfU\oplus \bfU^{\vee})^{\perp}$ so that $\bfV_0$ is a hermitian space of signature $(1,n-1)$ and
\begin{equation}
\label{Heisenberg-decomp-of-V}
\bfV=\bfU\oplus \bfV_0\oplus \bfU^{\vee}
\end{equation} 
We extend $\{u_1,v_n\}$ to a basis $\{u_1,u_2, v_1, \ldots,v_n\}$ of $V$ satisfying:
\begin{compactenum}[(i)]
\item[(i)] If $i,j\in \{1,2\}$ then $\langle u_i,u_j\rangle =\delta_{ij}$.
\item[(ii)] If $i,j\in \{1,\ldots, n\}$ then $\langle v_i,v_j\rangle =-\delta_{ij}$ and $\langle u_2,v_i\rangle =0$.
\item[(iii)] The subspace $V_0=\bfV_0\otimes_{\QQ}\RR$ is spanned by $\{u_2, v_1, \ldots , v_{n-1}\}$. 
\end{compactenum}
Finally, let $V_2^+=\CC\linspan\{u_1,u_2\}$ and $V_n^-=\CC\linspan\{v_1, \ldots, v_n\}$ so that
\begin{equation}
    \label{max-compact-identified}
    V=V_2^+\oplus V_n^-
\end{equation}
is a decomposition of $V$ into definite subspaces.  
\subsection{The Groups $\bfG$ and $\bfP$} Write $\mathbf{G}:=\U(\bfV)$ to denote the unitary group attached to $\bfV$ with the convention that $\bfG$ acts on the right of $\bfV$. More precisely, $\bfG$ is the algebraic group over $\QQ$ whose points on a test $\QQ$-algebra $R$ satisfy
$$
\mathbf{G}(R)=\{g\in \GL(\mathbf{V}\otimes_{\QQ}R) \colon \hbox{if $v,w\in \mathbf{V}\otimes_{\QQ}R$ then $\langle v\cdot g, w\cdot g\rangle =\langle v,w\rangle$}\}. 
$$
The Heisenberg parabolic $\bfP\leq \bfG$ is defined as the stabilizer in $\bfG$ of $\bfU$. A Levi factor $\bfM\leq \bfP$ may be defined as the stabilizer of $\bfU^{\vee}$ in $\bfP$. Through its action on the decomposition (\ref{Heisenberg-decomp-of-V}), $\bfM$ is identified as
$$
\bfM=\U(\bfV_0)\times \Res_{E/\QQ}\GG_{m}.
$$
Given a $\QQ$-algebra $R$ we write elements in $\bfM(R)$ as pairs $(h,z)$ with $h\in \U(\bfV_0)(R)$ and $z\in (R\otimes_{\QQ}E)^{\times}$. We normalize the coordinate $z$ so that the element $(h,z)\in \bfM(\QQ)$ acts on a vector $(u,v,u^{\vee})\in \bfU\oplus \bfV_0\oplus \bfU^{\vee}$ via the formula
$$
(u,v,u^{\vee})\cdot (h,z)=(z^{-1}u,v\cdot h, \overline{z}u^{\vee}).
$$
\subsection{The Lie algebras $\fg_0$ and $\fn_0$}
\label{The-Lie-algebras}
We  write $\overline{V}$ to denote the vector space obtained from $V$ by twisting the complex structure via the conjugation involution, i.e., $\overline{V}$ is the $\CC$-vector space spanned by the set $\{\overline{v}\colon v\in V\}$ subject to the relations $\overline{v+w}= \overline{v}+\overline{w}$ for all $v,w\in V$ and $\overline{\alpha}\cdot \overline{v}=\overline{\alpha\cdot v}$ for all $\alpha\in \CC$ and $v\in V$. Then $\overline{V}$ is identified with the $\CC$-linear dual $V^{\vee}$ of $V$ via the map $\overline{v}\mapsto \langle \cdot , v\rangle$. Write $\dag\colon V\otimes_{\CC}\overline{V}\to V\otimes_{\CC}\overline{V}$ for the $\CC$-semilinear involution induced by $(v\otimes \overline{w})^{\dag}=w\otimes \overline{v}$. Using the identification $\End(V)\simeq V\otimes_{\CC}V^{\vee}\simeq V\otimes_{\CC}\overline{V}$, $\fg_0:=\Lie(G)$ can be realized as 
\begin{equation}
\label{identification-g0}
\fg_0=(V\otimes_{\CC}\overline{V})^{\dag=-1}.
\end{equation}
Thus $\fg_0=\RR\linspan\{v\otimes \overline{w}-w\otimes \overline{v} \colon v,w\in V\}$. The primary benefit to this presentation of $\fg_0$ is that the right adjoint action of $G$ on $\fg_0$ is given by the simple formula
$$
(v\otimes \overline{w}-w\otimes \overline{v})\cdot g=vg\otimes \overline{wg}-wg\otimes \overline{vg}. 
$$ 

Let $\bfN\trianglelefteq \bfP$ denote the unipotent radical of $\bfP$ and set $N=\bfN(\RR)$. The commutator subgroup of $N$ is equal to the center $Z$ of $N$. Thus characters of $N$ are in bijection with characters of $N^{\mathrm{ab}}=N/Z$. In terms of the identification (\ref{identification-g0}), $Z=\{\exp(tib_1\otimes \overline{b_1}) \colon t\in \RR\}$. Let $\fn_0^{\mathrm{ab}}=\Lie(N^{\mathrm{ab}})$ so that
\begin{equation}
\label{n_0-abelianised}
\fn^{\mathrm{ab}}_0= \RR\linspan\{v\otimes \overline{b}_1-b_1\otimes \overline{v} \colon v\in V_0\}.
\end{equation}
The presentation (\ref{n_0-abelianised}) allows us to parameterize the set of unitary characters of $N$ using the vector space $V_0:=\bfV_0\otimes_{\QQ}\RR$ as follows. Given $T\in V_0$, let $\chi_{T,\infty}$ be the unique character of $N$ such that if $w\in V_0$ then
\begin{equation}
\label{defn-chiv0}
\chi_{T,\infty}(\exp(w\otimes \overline{b}_1-b_1\otimes \overline{w}))=e^{-2\pi i\Im(\langle T, w\rangle)}.
\end{equation} 
Since $(v,w)\mapsto -\Im(\langle v,w\rangle)$ defines a non-degenerate symplectic form on $V_0$, we obtain an identification 
\begin{equation}
\label{characters-of-N(R)}
    V_0\xrightarrow{\sim} \Hom(N,\CC^1), \qquad T\mapsto \chi_{T,\infty}.
\end{equation}
If $T\in \bfV_0$ we let $w\in \bfV_0(\AA)$ and define $\chi_{T}\colon \mathbf{N}(\QQ)\backslash \mathbf{N}(\AA)\to \CC^1$ via the formula $\chi_{T}(\exp(w\otimes \overline{b}_1-b_1\otimes \overline{w}))=\psi(-\Im(\langle T,w\rangle))$. The adelic analogue of (\ref{characters-of-N(R)}) is given by 
\begin{equation}
\label{characters-of-N(A)}
\bfV_0\xrightarrow{\sim} \Hom(\bfN(\QQ)\backslash \bfN(\AA), \CC^1), \qquad T\mapsto \chi_{T}. 
\end{equation}
Note that if $m=(h,z)\in \bfM(\QQ)$ and $T\in\bfV_0$, then
\begin{equation}
\label{M-character-action}
\chi_{T}(m\exp(w\otimes \overline{b}_1-b_1\otimes \overline{w})m^{-1})
=
\chi_{zT\cdot h}(\exp(w\otimes \overline{b}_1-b_1\otimes \overline{w})). 
\end{equation}
\subsection{The Cartan Decomposition in $\fg_0$}
\label{The-Cartan-Decomposition-in-g}
Define a maximal compact subgroup $K_{\infty}= \U(V_2^+)\times \U(V_n^-)$ as the stabilizer in $G$ of the decomposition (\ref{max-compact-identified}).
Let $\iota\in K_{\infty}$ be the element which acts as the identity on $V_2^+$ and acts as $-1$ on $V_n^-$. The Cartan involution associated to $K_{\infty}$ is given by $\theta:=\Ad(\iota)$. Therefore $\fk_0:=\Lie(K_{\infty})$ is the $+1$ eigenspace of $\theta$ and $\fp_0:=\fg_0^{\theta=-1}$ is a $K_{\infty}$-stable subspace of $\fg_0$. We wish to identify $\fp:=\fp_0\otimes_{\RR} \CC$ as a $K_{\infty}$-representation. As $\fp_0$ is itself a $\RR$-subspace of the $\CC$-vector space $V\otimes_{\CC}\overline{V}$, we write $\sqrt{-1}$ to denote the imaginary unit in the copy of $\CC$ appearing in the tensor $\fp_0\otimes_{\RR} \CC$.
 Given $j\in\{1,2\}$ and $k\in \{1, \ldots , n\}$, let
\begin{equation}
\label{eq-mathfrakp-basis}
[u_j\otimes \overline{v}_k]^{\pm}=(u_j\otimes \overline{v}_k-v_k\otimes \overline{u}_j)\otimes \left(\frac{1}{2}\right)+i(u_j\otimes \overline{v}_k+v_k\otimes \overline{u}_j)\otimes \left(\frac{\mp \sqrt{-1}}{2}\right).
\end{equation}
Define
$$
\fp^{\pm}=\CC\linspan\{[u_j\otimes \overline{v}_k]^{\pm} \colon j=1,2 \quad \hbox{and}\quad k=1, \ldots, n\}. 
$$
Then as a $K_{\infty}$-module,
\begin{equation}
\label{decomp-of-fracp}
\fp=\fp^+\oplus \fp^-.
\end{equation}
Moreover, we have $K_{\infty}$-module identifications
\begin{equation}
\label{identip}
\fp^+\simeq V_2^+\otimes \overline{V^-_n} \qquad\hbox{and} \qquad \fp^-\simeq \overline{V^+_2}\otimes V_n^-
\end{equation}
satisfying $[u_j\otimes \overline{v}_k]^+\mapsto u_j\otimes \overline{v}_k$ and $[u_j\otimes \overline{v}_k]^-\mapsto \overline{u}_j\otimes v_k$ respectively.

\section{The Fourier Expansion of Quaternionic Modular Forms on $\mathbf{G}$}
\label{The-Fourier-Expansion-of-QMFS-on-G}
Recall that $\mathbf{G}$ is the unitary group associated to a hermitian space over $E$ of signature $(2,n)$ and $\mathbf{P}=\mathbf{M} \mathbf{N}$ is the Heisenberg parabolic in $\mathbf{G}$ defined as the stabilizer of the isotropic line $\mathbf{U}=E b_1$. Fix a right invariant measure $dn$ on $\bfN(\QQ)\backslash \bfN(\AA)$. 
\subsection{Quaternionic Modular Forms on $\mathbf{G}$}
\label{subsection-quaternionic-modular-forms-on-G}
\indent Recall that $V=\mathbf{V}\otimes_{\QQ}\RR$, $G=\mathbf{G}(\RR)$, and $K_{\infty}= \U(V_2^+)\times \U(V_n^-)$ is a maximal compact subgroup of $G$ defined as the stabilizer of the orthogonal decomposition $V=V_2^+\oplus V_n^-$ (see (\ref{max-compact-identified})). Given $\ell\in \ZZ_{\geq 1}$, consider the $K_{\infty}$-representation 
$$
\VV_{\ell}:=\left(\Sym^{2\ell}V_2^+\otimes \det {}_{\U(V_2^+)}^{-\ell}\right)\boxtimes \mathbf{1}.
$$ 
Here $\mathbf{1}$ denotes the trivial representation of $\U(V_n^-)$. 
When $\ell\geq n$, Gross and Wallach \cite{GW96} construct an irreducible unitary representation $\Pi_{\ell}$ of $G$ such that $\Pi_{\ell}$ is discrete series and contains $\VV_{\ell}$ as its minimal $K$-type with multiplicity $1$. Similarly, if $1\leq \lfloor \frac{n-1}{2}\rfloor\leq \ell<n$ then Gross and Wallach (loc. cite) construct an irreducible unitary representation $\Pi_{\ell}$ which is no longer discrete series, but contains $\VV_{\ell}$ as its minimal $K$-type with multiplicity $1$. In \S \ref{The-Fourier-Expansion-of-QMFS-on-G} of this paper, $\ell$ will denote an integer satisfying either $\ell\geq n$ or $1\leq \lfloor \frac{n-1}{2}\rfloor\leq \ell<n$. \\
\indent The Schmid operators studied in this paper fall into a broad family of differential operators defined in \cite{Schmid89}. For the special case of the Schmid operators $\cD_{\ell}^+$ and $\cD_{\ell}^-$ associated to the representation $\Pi_{\ell}$, the definition proceeds as follows. In the notation of (\ref{decomp-of-fracp}) fix a basis $\{X_{\gamma}^+\}$ of $\fp^+$. The Killing form on $\fg$ induces a duality isomorphism $\fp^{-}\simeq (\fp^+)^{\vee}$. Let $\{X_{\gamma}^-\}$ denote the basis of $\fp^-$ dual to $\{X_{\gamma}^+\}$. Given $X\in \fg_0$ and $\varphi\in \cC^{\infty}(G, \VV_{\ell})$, the right regular action of $X$ on $\varphi$ is defined by $X\varphi(g)=\frac{d}{dt}(\varphi(g\exp(tX))\rvert_{t=0}$. This action extends linearly to $\fg$. \\
\indent Define operators $\tilde{\cD}_{\ell}^+$ and $\tilde{\cD}_{\ell}^-$ via
$$
\tilde{\cD}_{\ell}^{\pm}\colon \cC^{\infty}(G, \VV_{\ell})\to \cC^{\infty}(G, \VV_{\ell}\otimes_{\CC}\fp^{\mp}), \quad \varphi\mapsto \tilde{\cD}_{\ell}^{\pm}\varphi:=\sum_{\gamma}X_{\gamma}^{\pm}\varphi\otimes X_{\gamma}^{\mp}.
$$
For $v\in \{-\ell, \ldots, \ell\}$, let
\begin{equation}
\label{definition-basis-V-ELL}
[u_1^{\ell-v}]=\frac{u_1^{\ell-v}}{(\ell-v)!} \qquad \hbox{and} \qquad [u_2^{\ell+v}]=\frac{u_2^{\ell+v}}{(\ell+v)!}
\end{equation}
so that $\cB=\{[u_1^{\ell-v}][u_2^{\ell+v}]\colon v=-\ell,\ldots, 0,\ldots, \ell\}$ is a basis of $\mathbb{V}_{\ell}$. In terms of the basis $\cB$, a $\U(V_2^+)$-equivariant contraction $\Sym^{2\ell}V_2^+\times \overline{V_2^+}\to \Sym^{2\ell-1}V_2^+$is given by
$$
[u_1^{\ell-v}][u_2^{\ell+v}]\otimes \overline{u_1}\mapsto [u_1^{\ell-v-1}][u_2^{\ell+v}] \quad \hbox{and} \quad [u_1^{\ell-v}][u_2^{\ell+v}]\otimes \overline{u_2}\mapsto [u_1^{\ell-v}][u_2^{\ell+v-1}].
$$
We have another contraction $\Sym^{2\ell}V_2^+\times V_2^+\to \det_{\U(V_2^+)}^{-1}\otimes \Sym^{2\ell-1}V_2^+$ defined by
$$
[u_1^{\ell-v}][u_2^{\ell+v}]\otimes u_1\mapsto -[u_1^{\ell-v}][u_2^{\ell+v-1}] \quad \hbox{and} \quad [u_1^{\ell-v}][u_2^{\ell+v}]\otimes u_2\mapsto [u_1^{\ell-v-1}][u_2^{\ell+v}].
$$
These contractions yield $K_{\infty}$-equivariant projections
$$
\begin{cases}
\pi^+\colon \VV_{\ell}\otimes \fp^-=(\Sym^{2\ell}V_2^+\otimes \det{}_{\U(V_2^+)}^{-\ell}\otimes \overline{V_2^+})\boxtimes V_n^-\to (\Sym^{2\ell-1}V_2^+\otimes \det{}_{\U(V_2^+)}^{-\ell})\boxtimes V_n^-,
\\
\pi^-\colon \VV_{\ell}\otimes \fp^+=(\Sym^{2\ell}V_2^+\otimes \det{}_{\U(V_2^+)}^{-\ell}\otimes V_2^+)\boxtimes \overline{V_n^-}\to (\Sym^{2\ell-1}V_2^+\otimes \det{}_{\U(V_2^+)}^{-(\ell+1)})\boxtimes \overline{V_n^-}
\end{cases}
$$
The Schmid operators $\cD_\ell^+$ and $\cD_\ell^-$ are defined as
\begin{equation}
\label{defn-Schmid-differential}
\begin{cases}
\cD_{\ell}^+\colon \cC^{\infty}(G, \VV_{\ell})\to \cC^{\infty}(G,(\Sym^{2\ell-1}V_2^+\otimes \det{}_{\U(V_2^+)}^{-\ell})\boxtimes V_n^- ), \quad \varphi\mapsto \pi^+\circ \tilde{\cD}_{\ell}^+\varphi, \\
\cD_{\ell}^-\colon \cC^{\infty}(G, \VV_{\ell})\to  \cC^{\infty}(G,(\Sym^{2\ell-1}V_2^+\otimes \det{}_{\U(V_2^+)}^{-(\ell+1)})\boxtimes \overline{V_n^-}), \quad \varphi\mapsto \pi^-\circ \tilde{\cD}_{\ell}^-\varphi. 
\end{cases}
\end{equation}
With the operators $\cD_{\ell}^{\pm}$ in hand, we may define quaternionic modular forms on $\mathbf{G}$. 
\begin{definition}
\label{definition-quaternionic-modular-form}
A \textit{weight $\ell$ quaternionic modular form on $\mathbf{G}$} is a smooth function $F\colon \mathbf{G}(\AA) \to \VV_{\ell}$ of moderate growth such that:
\begin{compactenum}[(i)] 
\item If $\gamma\in \mathbf{G}(\QQ)$ and $g\in \mathbf{G}(\AA)$ then $F(\gamma g)=F(g)$.
\item If $k\in K_{\infty}$ and $g\in \mathbf{G}(\AA)$ then $F(gk)=F(g)\cdot k$. 
\item The functions $\cD_{\ell}^+F\rvert_G$ and $\cD_{\ell}^-F\rvert_G$ vanish identically on $G$. 
\end{compactenum}
\end{definition}
\subsection{Generalized Whittaker Coefficients}
\label{subsection-Generalized-Whittaker-Coefficients}
We now introduce the class of generalized Whittaker functions which feature in our study of the Fourier expansion of quaternionic modular forms on $\bfG$. 
\begin{definition}
\label{definition-generalized-whittaker-coefficient}
Let $N=\bfN(\RR)$ and fix $T\in V_0$. Write 
$$
\cC_{N,T}^{\mathrm{md}}(G, \VV_{\ell})_{K_{\infty}}^{\cD_{\ell}=0}
$$ to denote the space of smooth moderate growth functions $\cW_{\chi}\colon G\to \VV_{\ell}$ satisfying:
\begin{compactenum}[(a)] 
\item If $k\in K_{\infty}$ and $g\in G$ then $\cW_{\chi}(gk)=\cW_{\chi}(g)\cdot k$. 
\item If $n\in N$ and $g\in G$ then $\cW_{\chi}(ng)=\chi_{T,\infty}(n)\cW_{\chi}(g)$. 
\item The functions $\cD_{\ell}^+\cW_{\chi}$ and $\cD_{\ell}^-\cW_{\chi}$ vanish identically on $G$. 
\end{compactenum}
\end{definition}
\begin{thm}
\label{Theorem-Multiplicity-At-Most-1}
Suppose $T\in V_0$ is non-zero. Define a function 
$$
\beta_T\colon M\to \CC,\qquad \beta_T(h,z)=\frac{4\pi}{\sqrt{2}}|\langle u_2, zT\cdot h\rangle|.
$$
We have
$$
\dim_{\CC}\left(\cC_{N,T}^{\mathrm{md}}(G, \VV_{\ell})_{K_{\infty}}^{\cD_{\ell}=0}\right)
=
\begin{cases} 1, &\hbox{if $\langle T,T\rangle \geq 0$}, \\
0, &\hbox{if $\langle T,T\rangle< 0$.}
\end{cases}
$$
If $\langle T,T\rangle \geq 0$, then there exists a unique function $\cW_T\in \cC_{N,T}^{\mathrm{md}}(G, \VV_{\ell})_{K_{\infty}}^{\cD_{\ell}=0}$ satisfying
\begin{equation}
\label{Main-Formula-Mult-One}
\small \cW_{T}(h,z):=\sum_{-\ell\leq v\leq \ell}|z|^{2\ell+2}\left(\frac{|\beta_T(h,z)|}{\beta_T(h,z)}\right)^vK_v\left(|\beta_T(h,z)|\right)[u_1^{\ell-v}][u_2^{\ell+v}]
\end{equation}
for all $(h,z)\in M$.
Here $K_v$ denotes the $K$-Bessel function $K_v(x)=\frac{1}{2}\int_0^{\infty}t^{v-1}e^{-x(t+t^{-1})}dt$.
\end{thm}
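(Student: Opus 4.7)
The plan is to adapt the strategy of Pollack \cite{Pollack21a} and Kim--Takase \cite{KT95} to the setting of $\U(2,n)$.

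First, one uses the Iwasawa-type decomposition $G=NMK_{\infty}$ to show that any $\cW_T$ satisfying conditions (a), (b) of Definition~\ref{definition-generalized-whittaker-coefficient} is determined uniquely by its restriction to $M$, subject to the constraint that this restriction is equivariant for the residual right action of $K_M:=M\cap K_{\infty}$ on $\VV_{\ell}$. By the left $N$-equivariance under $\chi_{T,\infty}$, it then suffices to impose the Schmid equations $\cD_{\ell}^{\pm}\cW_T=0$ at points of $M$ only.

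Next, I would expand $\cW_T|_M=\sum_{v=-\ell}^{\ell}f_v(h,z)[u_1^{\ell-v}][u_2^{\ell+v}]$ in the basis $\cB$ and translate the Schmid equations into a system of first-order linear PDEs on the component functions $f_v$. Concretely, for each basis element $X_{\gamma}^{\pm}=[u_j\otimes\overline{v}_k]^{\pm}$ of $\fp^{\pm}$ and each $m\in M$, one decomposes $\Ad(m^{-1})X_{\gamma}^{\pm}$ as a sum of pieces in $\fn$, $\fm$, and $\fk_{\infty}$ via a PBW-style argument. The $\fn$-component then contributes algebraic multiplication by a quantity proportional to $\beta_T(h,z)$ (via $d\chi_{T,\infty}$), the $\fm$-component differentiates the $f_v$ along $M$, and the $\fk_{\infty}$-component acts through the $K_{\infty}$-module structure on $\VV_{\ell}$. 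After applying the $K_{\infty}$-equivariant projections $\pi^{\pm}$ from \eqref{defn-Schmid-differential}, one obtains a coupled system relating consecutive $f_v$ and their derivatives.

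The third step is to exploit the right $K_M$-equivariance to collapse the system to an ODE in a single radial variable. Since $K_M$ acts on $M$ with orbits parametrized generically by $|\beta_T|$, each $f_v$ must take the form $|z|^{2\ell+2}\bigl(|\beta_T|/\beta_T\bigr)^v g_v(x)$ with $x:=|\beta_T(h,z)|$ and $g_v$ a function of one real variable. Substituting into the system from the previous step should produce the modified Bessel ODE $x^2 g_v''+xg_v'-(x^2+v^2)g_v=0$ for each $v$, whose two-dimensional solution space is spanned by $I_v(x)$ and $K_v(x)$; the moderate growth condition rules out $I_v$, yielding the multiplicity-at-most-one bound. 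Direct substitution of \eqref{Main-Formula-Mult-One} into the Schmid equations, using standard recursion identities for $K_v$, then furnishes existence when $\langle T,T\rangle\geq 0$.

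For the case $\langle T,T\rangle<0$, the reduction shows that no nonzero moderate-growth function survives: the sign of $\langle T,T\rangle$ ultimately controls whether the radial equation admits a $K_v$-type decaying solution on the relevant domain, or forces every solution to grow. The main technical obstacle is expected to be the bookkeeping in Step~2, where the Schmid operators must be tracked carefully through the PBW decomposition and the contractions $\pi^{\pm}$ so that the resulting system in Step~3 is manifestly of modified Bessel form; this is the heart of the analogous computation carried out for $\SU(2,1)$ in \cite[Theorem~4.5]{KT95} and must be extended uniformly in $n$ to handle the full $\U(2,n)$ case, including the continued-discrete-series range $1\leq\lfloor(n-1)/2\rfloor\leq\ell<n$.
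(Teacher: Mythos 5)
Your overall architecture matches the paper's: restrict to $M$, write the Iwasawa ($\fn+\fm+\fk$) decomposition of the basis vectors of $\fp^{\pm}$, turn $\cD_{\ell}^{\pm}\cW_T\rvert_M\equiv 0$ into a coupled first-order system for the components $f_v$, recognize a modified Bessel equation, kill $I_v$ by moderate growth, and verify the candidate formula directly. However, there are two genuine gaps. First, your Step 3 asserts that right $K_M$-equivariance alone forces $f_v(h,z)=|z|^{2\ell+2}(|\beta_T|/\beta_T)^v g_v(|\beta_T|)$ because ``$K_M$ acts on $M$ with orbits parametrized generically by $|\beta_T|$.'' This is false for $n\geq 2$: $K_M\cong \U(1)\times(\U(1)\times\U(n-1))$, so the orbit space of $K_M$ on $M=\U(V_0)\times\CC^{\times}$ has real dimension $2n-1$, while $(|z|,|\beta_T|)$ supplies only two invariants. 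The collapse to a single radial variable is \emph{not} a symmetry statement; it must be extracted from the remaining Schmid equations in the $V_0$-directions (the equations involving $[u_2\otimes\overline{v}_k]^{\pm}$ for $1\leq k<n$), which is exactly how the paper shows the leading coefficient $Y_{T,0}(h,s)$ is constant in $h$ (using that $[u_2\otimes\overline{v}_k]^{+}\beta_T\equiv 0$ and $[u_2\otimes\overline{v}_k]^{-}\overline{\beta}_T\equiv 0$, together with the recursions for $K_v$). Only in the $n=1$ case of \cite{KT95} does your shortcut suffice.

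Second, your explanation of the dichotomy in $\langle T,T\rangle$ misidentifies the mechanism. The radial ODE is the same modified Bessel equation for every sign of $\langle T,T\rangle$, and moderate growth removes $I_v$ in every case; nothing about the ODE ``forces every solution to grow'' when $\langle T,T\rangle<0$. The actual point is geometric: for $T\neq 0$, the locus $\{m\in M:\beta_T(m)=0\}$ is nonempty precisely when $\langle T,T\rangle<0$ (if $\langle T,T\rangle\geq 0$ the projection of $zT\cdot h$ onto $u_2$ never vanishes). When that locus is nonempty, the surviving solution $Y_0\cdot(|\beta_T|/\beta_T)^vK_v(|\beta_T|)$ blows up as $\beta_T\to 0$ because $K_v$ has a pole at $0$, contradicting the smoothness (continuity) of $\cW_T$ at such points, so $Y_0=0$; when $\langle T,T\rangle\geq 0$ the nonvanishing of $\beta_T$ on all of $M$ is also exactly what makes the candidate formula \eqref{Main-Formula-Mult-One} smooth, which your existence step tacitly needs. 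Without identifying this vanishing-locus dichotomy, neither the vanishing statement for $\langle T,T\rangle<0$ nor the smoothness of the candidate for $\langle T,T\rangle\geq 0$ (including the isotropic case $\langle T,T\rangle=0$, which lies outside the discrete-series multiplicity results you could otherwise cite) is actually established.
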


\begin{remark}
    The $n=1$ case of the above theorem is established in \cite{KT95}. 
\end{remark}
\begin{remark}
\label{rmk2-qmfs}
Fix $T\in V_0$ and let $\Pi_{\ell}^{\infty}$ be the space of smooth vectors in $\Pi_{\ell}$ taken relative to its Fr\'echet topology. The space of \textit{generalized Whittaker functionals} is 
$$
\mathrm{Wh}_N(\Pi_{\ell},\chi_{T,\infty}):=\Hom_N^{\mathrm{cont}}(\Pi_{\ell}^{\infty},\chi_{T,\infty}).
$$ 
In \cite[Theorem 16]{Wallach03}, Wallach calculates the dimension of $\mathrm{Wh}_N(\Pi_{\ell},\chi_{T,\infty})$ in the case when $\Pi_{\ell}$ is a discrete series representation and the character $\chi_T$ is generic. The result of (loc. cite) applies to a rather general class of real Lie groups, however, in the case of the group $\U(2,n)$, we may summarize the result of (loc. cite) as the statement that if  $\ell\geq n$, then
\begin{equation}
\label{Wallach-Theorem-16}
\dim_{\CC} \mathrm{Wh}_N(\Pi_{\ell},\chi_{T,\infty})=
\begin{cases}
    1, &\hbox{if $\langle T,T\rangle > 0$,} \\
    0, &\hbox{if $\langle T,T\rangle <0$.}
\end{cases}
\end{equation} 
Here, we have applied the fact that the character $\chi_{T,\infty}$ is admissible in the sense of \cite{Wallach03} if and only if $\langle T,T\rangle >0$. To see this equivalence, let $N_1$ denote the subgroup of $N$ with Lie algebra spanned by $\{b_1\otimes \overline{u}_2-u_2\otimes \overline{b}_1, i(b_1\otimes \overline{u}_2+u_2\otimes \overline{b}_1), ib_1\otimes \overline{b}_1\}$. If $\langle T,T\rangle >0$ and $m\in \mathbf{M}^{\mathrm{der}}(\RR)$ then $\langle T\cdot m, u_2\rangle \neq 0$. Hence, the character $\chi_{T\cdot m,\infty}$ is non-trivial on $N_1$, and $\chi_T$ is admissible in the sense of (loc. cite). \\
\indent Conversely, if $\langle T, T\rangle \neq 0$ and $\chi_{T\cdot m,\infty}$ is non-trivial on $N_1$ for all $m\in \mathbf{M}^{\mathrm{der}}(\RR)$, then $\langle T\cdot m, u_2\rangle \neq 0$ for all $m\in \mathbf{M}^{\mathrm{der}}(\RR)$. Applying the Witt extension theorem, we conclude that $\langle T, T\rangle >0$, which finishes the proof that $\langle T, T\rangle>0$ if and only if $\chi_{T,\infty}$ is admissible. \\
\indent Now, if $\{v_i\}_{-\ell\leq v\leq \ell}$ is a basis for $\VV_{\ell}$ and $\{v_i^{\vee}\}_{-\ell\leq v\leq \ell}$ is the dual basis of $\VV_{\ell}^{\vee}\simeq \VV_{\ell}$ then the map 
\begin{equation}
\label{injective-map}
\mathrm{Wh}_N(\Pi_{\ell},\chi_{T,\infty})\to \mathcal{C}_{N,T}^{\mathrm{md}}(G,\VV_{\ell})_{K_{\infty}}^{\cD_{\ell}=0}, \quad \cL\mapsto \left(g\mapsto \sum_{i=-\ell}^{\ell}\cL(g\cdot v_i) v_i^{\vee}\right)
\end{equation}
is clearly injective. Our proof of Theorem \ref{Theorem-Multiplicity-At-Most-1} does not depend upon \cite[Theorem 16]{Wallach03}, however, in light of \eqref{injective-map} and \eqref{Wallach-Theorem-16}, the reader may observe that these results are closely related. 
\end{remark}
We dedicate \S \ref{subsection-Explicating-the-Schmid-Equations} and \S \ref{subsection-solving-the-schmid-equations} to a detailed presentation of the proof of Theorem \ref{Theorem-Multiplicity-At-Most-1}. To give an indication of the method, suppose $T\in V_0$ satisfies $T\neq 0$ and let $\mathcal{W}_T\colon G\to \VV_{\ell}$ be a smooth function of moderate growth satisfying hypotheses (a) and (b) of Definition \ref{definition-generalized-whittaker-coefficient}. We write $\{W_{T,v}\colon G\to \VV_{\ell}\}_{-\ell\leq v\leq \ell}$ for the unique family of scalar valued functions such that if $g\in G$ then
\begin{equation}\label{defn-WTv}
\cW_T(g)=\sum_{-\ell\leq v \leq \ell}W_{T,v}(g)[u_1^{\ell-v}][u_2^{\ell+v}].
\end{equation}
The first step in the proof of Theorem \ref{Theorem-Multiplicity-At-Most-1} is to consider the restrictions $\mathcal{D}_{\ell}^{+}\cW_T\rvert_M$ and $\mathcal{D}_{\ell}^{-}\cW_T\rvert_M$. We express the conditions $\mathcal{D}_{\ell}^{\pm}\mathcal{W}_T\rvert_M\equiv 0$ as a system of differential equations involving the functions $\mathcal{W}_{T,v}$ (see Proposition \ref{System-Scalar-DEs}). In \S \ref{subsubsection-establishing-the-candidate-solution} we arrive at the formula (\ref{Main-Formula-Mult-One}) by solving a subset of the system of equations in Proposition \ref{System-Scalar-DEs}. Formula (\ref{Main-Formula-Mult-One}) is not smooth as a function on $M$ unless $\langle T,T\rangle \geq 0$, (see Proposition \ref{Proposition-a-large-portion}) and this observation, together with our analysis in \S \ref{subsubsection-establishing-the-candidate-solution} implies the $(T,T)<0$ case of Theorem \ref{Theorem-Multiplicity-At-Most-1} as well as the inequality $\dim \cC_{N,{T}}^{\mathrm{md}}(G, \VV_{\ell})_{K_{\infty}}^{\cD_{\ell}=0}\leq 1$ (see Proposition \ref{Proposition-a-large-portion}). It remains to show that if $\langle T,T\rangle \geq 0$ then (\ref{Main-Formula-Mult-One}) defines an element of $\cC_{N,T}^{\mathrm{md}}(G, \VV_{\ell})_{K_{\infty}}^{\cD_{\ell}=0}$.  This final check is carried out in \S \ref{subsubsection-verifying-schmid-equations}. 
\begin{cor}\label{cor1} Suppose $F\colon \bfG(\AA)\to \VV_{\ell}$ is a weight $\ell$ quaternionic modular form. Let $\bfZ$ be the center of the Heisenberg unipotent radical $\bfN$. Write $F_{\bfZ}$ (resp. $F_{\bfN}$) for the constant term of $F$ along $\bfZ$ (resp. $\bfN$). 
There exist locally constant functions $
\{a_{T}(F, \cdot )\colon \bfG(\AA_\fin)\to \CC\}_{T\in \bfV_0\colon \langle T,T\rangle\geq 0}
$ such that if ${g_\fin}\in \bfG(\AA_\fin)$ and $g_{\infty}\in \bfG(\RR)$ then 
\begin{equation}
\label{FE-Noncuspidal-QMF1}
F_{\bfZ}(g_\fin g_{\infty})=F_{\bfN}(g_\fin g_{\infty})+\sum_{T\in \bfV_0-\{0\}\colon \langle T,T\rangle \geq 0}a_T(F,g_\fin)\cW_T(g_{\infty}).
\end{equation}
Moreover, if $F$ is cuspidal then the Fourier expansion of $F_{\bfZ}$ takes the form
\begin{equation}
\label{FE-Cuspidal-QMF}
F_{\bfZ}({g_\fin}g_{\infty})=\sum_{T\in \bfV_0\colon \langle T, T\rangle> 0}a_{T}(F,{g_\fin})\cW_{T}(g_{\infty}).
\end{equation}
\end{cor}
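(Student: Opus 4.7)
The plan is to combine an abelian Fourier expansion of $F_\bfZ$ along $\bfN/\bfZ$ with the multiplicity-at-most-one result of Theorem \ref{Theorem-Multiplicity-At-Most-1}. Since $\bfN/\bfZ \simeq \bfV_0$ is abelian, I first expand
\begin{equation*}
F_\bfZ(g) = \sum_{T \in \bfV_0} F_{\bfN, T}(g), \qquad F_{\bfN, T}(g) := \int_{\bfN(\QQ)\backslash\bfN(\AA)} F(ng)\,\overline{\chi_T(n)}\, dn,
\end{equation*}
parametrized via the identification \eqref{characters-of-N(A)}. The $T = 0$ summand is precisely $F_\bfN$, and convergence follows from the moderate growth of $F$.

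For each nonzero $T \in \bfV_0$ and each $g_\fin \in \bfG(\AA_\fin)$, the next step is to verify that the archimedean slice $g_\infty \mapsto F_{\bfN, T}(g_\fin g_\infty)$ lies in $\cC_{N,T}^{\mathrm{md}}(G, \VV_\ell)_{K_\infty}^{\cD_\ell = 0}$. Right $K_\infty$-equivariance and moderate growth pass through from $F$; the transformation law $F_{\bfN, T}(n_\infty g) = \chi_{T, \infty}(n_\infty) F_{\bfN, T}(g)$ for $n_\infty \in N$ is a standard change of variables, using $\chi_T\rvert_N = \chi_{T, \infty}$. Condition (c) of Definition \ref{definition-generalized-whittaker-coefficient} holds because the Schmid operators $\cD_\ell^{\pm}$ are built from the right-regular action of elements of $\fp^{\pm} \subset \fg$ and therefore commute with left-translation by $\bfN(\AA)$; in particular they pass through the integral against $\overline{\chi_T}$, so $\cD_\ell^{\pm} F \equiv 0$ yields $\cD_\ell^{\pm} F_{\bfN, T} \equiv 0$. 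Invoking Theorem \ref{Theorem-Multiplicity-At-Most-1} then forces $F_{\bfN, T}(g_\fin \cdot) \equiv 0$ whenever $\langle T, T\rangle < 0$ and produces a unique scalar $a_T(F, g_\fin) \in \CC$ with $F_{\bfN, T}(g_\fin g_\infty) = a_T(F, g_\fin)\,\cW_T(g_\infty)$ whenever $\langle T, T\rangle \geq 0$. Local constancy of $g_\fin \mapsto a_T(F, g_\fin)$ follows from the right $K_\fin$-invariance of $F$ under some open compact subgroup $K_\fin \leq \bfG(\AA_\fin)$. Assembling these pieces proves \eqref{FE-Noncuspidal-QMF1}.

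For the cuspidal refinement \eqref{FE-Cuspidal-QMF}, I must additionally show that cuspidality of $F$ forces $F_\bfN \equiv 0$ and $F_{\bfN, T} \equiv 0$ for every nonzero isotropic $T$. The first vanishing is immediate since $\bfN$ is the unipotent radical of the proper parabolic $\bfP$. For the second, when $\langle T, T\rangle = 0$ and $T \neq 0$ the character $\chi_T$ is degenerate: the stabilizer of $T$ under the $\bfM(\QQ)$-action \eqref{M-character-action} contains the non-trivial unipotent radical $\bfU_{\bfP_T}$ of the parabolic of $\bfM$ stabilizing the isotropic line $\QQ T \subset \bfV_0$. The subgroup $\bfR := \bfU_{\bfP_T} \ltimes \bfN$ is then the unipotent radical of the proper parabolic $\bfP_T \ltimes \bfN \subsetneq \bfP$ of $\bfG$, and a standard unipotent averaging argument of Piatetski-Shapiro type expresses $F_{\bfN, T}$ as a finite combination of constant terms of $F$ along $\bfR$ (or its $\bfG(\QQ)$-conjugates), each of which vanishes by the cuspidality of $F$. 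The principal obstacle will be the final step of the cuspidal case, as it requires setting up the precise averaging and tracking how the character $\chi_T$ interacts with the additional unipotent directions; all other pieces are formal consequences of Theorem \ref{Theorem-Multiplicity-At-Most-1} and standard properties of Fourier expansions on adelic quotients.
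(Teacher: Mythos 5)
The first half of your argument (the expansion \eqref{FE-Noncuspidal-QMF1}) is fine and is exactly the standard route the paper takes: expand $F_{\bfZ}$ along the abelian quotient $\bfN/\bfZ$, check that each slice $g_{\infty}\mapsto F_{\bfN,T}(g_{\fin}g_{\infty})$ lies in $\cC_{N,T}^{\mathrm{md}}(G,\VV_{\ell})_{K_{\infty}}^{\cD_{\ell}=0}$ (the Schmid operators act by right translation, hence commute with the integral over the compact set $[\bfN]$), and invoke Theorem \ref{Theorem-Multiplicity-At-Most-1}. Likewise $F_{\bfN}\equiv 0$ for cuspidal $F$ is immediate.

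The gap is in your treatment of the nonzero isotropic $T$. Your key claim --- that $F_{\bfN,T}$ is a finite combination of constant terms of $F$ along $\bfR=\bfU_{\bfP_T}\ltimes\bfN$ and its conjugates --- is not justified and is not correct as stated. Expanding $F_{\bfN,T}$ along $[\bfU_{\bfP_T}]$ produces Fourier coefficients of $F$ along $\bfR$ whose characters restrict to $\chi_T$ on $\bfN$, i.e.\ are \emph{nontrivial} on $\bfR$; cuspidality only annihilates the integral of $F$ against the trivial character of the unipotent radical of a parabolic, so it says nothing about these terms. More fundamentally, the vanishing you are after is not a soft consequence of cuspidality: for a general (say, generic) cusp form on $\U(2,n)$ the coefficients $F_{\bfN,T}$ with $\langle T,T\rangle=0$, $T\neq 0$ need not vanish, so no purely group-theoretic averaging argument that ignores the quaternionic condition can succeed. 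Any correct proof must use the archimedean structure, and this is exactly what the paper does: by the first part, $F_{\bfN,T}(g_{\fin}g_{\infty})=a_T(F,g_{\fin})\cW_T(g_{\infty})$, and one compares growth on the two sides of (\ref{comparison-of-hands}). Since $F$ is cuspidal it is bounded, and the integral over the compact domain $[\bfN]$ is bounded in $g_{\infty}$; on the other hand, for isotropic $T$ the quantity $\beta_T$ is not bounded away from zero on $M^{z=1}$, and each $K_v(x)$ blows up as $x\to 0^{+}$, so $\cW_T$ is unbounded along $M$. This forces $a_T(F,g_{\fin})=0$, giving \eqref{FE-Cuspidal-QMF}. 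If you wish to salvage an averaging-style argument you would at minimum need the extra left-invariance of $\cW_T$ under the real points of the stabilizer of $T$ (which does hold, by the explicit formula), and even then one only obtains vanishing of certain finite-adelic averages of $a_T(F,\cdot)$, not the pointwise vanishing required; the boundedness argument is both shorter and complete.
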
  
\begin{proof} The proof of formula (\ref{FE-Noncuspidal-QMF1}) is standard given Theorem \ref{Theorem-Multiplicity-At-Most-1}. Indeed, the domain of integration $\bfN(\QQ)\backslash \bfN(\AA)$ is compact. So if $g_f\in \bfG(\AA_{\mathrm{fin}})$, then the function
$$
F_T(\cdot\,g_f)\colon \bfG(\RR)\to \mathbb{V}_{\ell}, \qquad F_T(g_{\infty}g_f)= \int_{\bfN(\QQ)\backslash \bfN(\AA)}F(ng_{\infty}g_{\mathrm{fin}})\chi_{T}(n)^{-1}\,dn
$$
satisfies the hypotheses of Definition \ref{definition-generalized-whittaker-coefficient}. Applying Theorem \ref{Theorem-Multiplicity-At-Most-1}, it follows that $F_T\equiv 0$ whenever $\langle T, T\rangle <0$. Similarly, Theorem \ref{Theorem-Multiplicity-At-Most-1} implies that there exists a scalar $a_T(F, g_\fin)\in \CC$  such that $F_T(g_fg_{\infty})=a_{T}(F,{g_\fin})\cW_{T}(g_{\infty})$ whenever $T\neq 0$ and $\langle T, T\rangle \geq 0$. The proof of \eqref{FE-Noncuspidal-QMF1} now follows by Fourier expanding $F_{\mathbf{Z}}$ in characters of $\mathbf{N}/\mathbf{Z}$. For the proof of (\ref{FE-Cuspidal-QMF}), assume $F$ is cuspidal. Let $T\in \bfV_0$ be non-zero and suppose $\langle T, T\rangle=0$. Fixing ${g_\fin}\in \bfG(\AA_\fin)$ it suffices to show that $a_{T}(F, {g_\fin} )= 0$. We consider the equality 
\begin{equation}
\label{comparison-of-hands}
a_{T}(F,{g_\fin})\cW_{T}(g_{\infty}) =\int_{\bfN(\QQ)\backslash \bfN(\AA)}F(n{g_\fin}g_{\infty})\overline{\chi_{T}(n)}dn.
\end{equation}
On the one hand, $F$ is a cusp form and thus bounded on $\bfG(\AA)$. It follows that the left-hand side of (\ref{comparison-of-hands}) is bounded as a function of $g_{\infty}$ relative to the $K_{\infty}$ invariant norm on $\mathbb{V}_{\ell}$. On the other hand, if $-\ell\leq v\leq \ell$ then the $K$-Bessel function $K_v(x)$ is unbounded as $x\to 0^+$. Denote $M^{z=1}=\{(h,z)\in M: z=1\}$. Since $M^{z=1}\to \RR_{>0}$, $(h, 1)\mapsto |\langle u_2\cdot h^{-1},T\rangle|$ is not bounded away from zero, \eqref{Main-Formula-Mult-One} implies that $\mathcal{W}_T(g_{\infty})$ is unbounded relative to the $K_{\infty}$ invariant norm on $\mathbb{V}_{\ell}$. Hence $a_{T}(F, {g_\fin} )\equiv 0$ and the proof is complete.
\end{proof}
\subsection{Explicating the Schmid Equations}
\label{subsection-Explicating-the-Schmid-Equations}
In this subsection we begin the proof of Theorem \ref{Theorem-Multiplicity-At-Most-1}. Throughout \S \ref{subsection-Explicating-the-Schmid-Equations} we fix $T\in V_0$ such that $T\neq 0$ and suppose $\cW_T\colon G\to \RR$ is a smooth function satisfying conditions (a) and (b) of Definition \ref{definition-generalized-whittaker-coefficient}. 
\subsubsection{Iwasawa Coordinates}
Let $\fn$ (resp. $\fm$) denote the complexified Lie algebra of $N$ (resp. $M$).
The next lemma will be used to study the restrictions $\cD_{\ell}^{\pm}\cW_T\rvert_{M}$. The proof is a direct computation. 
\begin{lemma}
\label{Iwasawa-Coordinates}
In terms of the Iwasawa decomposition $\fg=\fn+\fm+\fk$, the element $[u_j\otimes \overline{v_k}]^{\pm}$ is expressed as $[u_j\otimes\overline{v_k}]^{\pm}=X^{\pm}+Y^{\pm}+H^{\pm}$
where $X^{\pm}\in \fn$, $Y^{\pm}\in \fm$, and $H^{\pm}\in \fk$ and the triple $(X^{\pm},Y^{\pm},H^{\pm})$ is defined as follows:
\begin{itemize}
    \item[(a)] For $j=1$ and $k=n$, $X^{\pm}=(2ib_1\otimes \overline{b}_1)\otimes \left(\frac{\mp\sqrt{-1}}{2}\right)$, $Y^{\pm}=(b_2\otimes \overline{b}_1-b_1\otimes \overline{b}_2)\otimes \left(\frac{1}{2}\right)$, \\ and $H^{\pm}=i(b_1\otimes \overline{b}_1+b_2\otimes \overline{b}_2)\otimes \left(\frac{\pm \sqrt{-1}}{2}\right)$.
    \item[(b)] For $j=1$ and $1\leq k< n$, $X^{\pm}=\frac{2}{\sqrt{2}}[b_1\otimes \overline{v_k}]^{\pm}$, $Y^{\pm}=0$, and $H^{\pm}=-[v_n\otimes \overline{v_k}]^{\pm}$.
    \item[(c)] For $j=2$ and $k=n$, $X^{\pm}=\frac{2}{\sqrt{2}}[u_2 \otimes \overline{b_1}]^{\pm}$, $Y^{\pm}=0$, and $H^{\pm}=-[u_2\otimes \overline{u_1}]^{\pm}$.
    \item[(d)] For $j=2$ and $1\leq k <n$, $X^{\pm}=0$, $Y^{\pm}=[u_2\otimes \overline{v}_k]^{\pm}$, $H^{\pm}=0$. 
\end{itemize}
\end{lemma}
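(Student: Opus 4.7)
The plan is to verify the four cases by direct computation, using two changes of basis: $u_1=(b_1+b_2)/\sqrt{2}$ and $v_n=(b_1-b_2)/\sqrt{2}$ (equivalently $b_1=(u_1+v_n)/\sqrt{2}$, $b_2=(u_1-v_n)/\sqrt{2}$), together with the characterizations
\begin{itemize}
\item $\fn^{\mathrm{ab}}$ is spanned by $\{v\otimes\overline{b_1}-b_1\otimes\overline{v}:v\in V_0\}$ and $\fz=\RR\cdot ib_1\otimes\overline{b_1}$;
\item $\fm$ consists of elements preserving both $\bfU$ and $\bfU^{\vee}$;
\item $\fk$ consists of elements preserving both $V_2^+$ and $V_n^-$.
\end{itemize}
A preliminary observation is that formula (2.6) is $\RR$-bilinear in the two tensor slots and vanishes when the same vector is substituted in both, so one can meaningfully write $[x\otimes\overline{y}]^{\pm}$ for any $x,y\in V$ and exploit linearity in $x$ and $y$.

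For case (a), I would expand $u_1\otimes\overline{v_n}-v_n\otimes\overline{u_1}$ and $u_1\otimes\overline{v_n}+v_n\otimes\overline{u_1}$ in the $\{b_1,b_2\}$ basis. Direct arithmetic gives the identities $u_1\otimes\overline{v_n}-v_n\otimes\overline{u_1}=b_2\otimes\overline{b_1}-b_1\otimes\overline{b_2}$ and $u_1\otimes\overline{v_n}+v_n\otimes\overline{u_1}=b_1\otimes\overline{b_1}-b_2\otimes\overline{b_2}$. The first element lives in $\fm$ (Levi action on $\CC b_1\oplus\CC b_2$). For the second, I rewrite $i(b_1\otimes\overline{b_1}-b_2\otimes\overline{b_2})=2ib_1\otimes\overline{b_1}-i(b_1\otimes\overline{b_1}+b_2\otimes\overline{b_2})$ and then observe $b_1\otimes\overline{b_1}+b_2\otimes\overline{b_2}=u_1\otimes\overline{u_1}+v_n\otimes\overline{v_n}$, which acts diagonally on $V_2^+\oplus V_n^-$ and hence lies in $\fk$. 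Matching coefficients of $\tfrac{1}{2}$ and $\mp\tfrac{\sqrt{-1}}{2}$ reproduces the stated $(X^{\pm},Y^{\pm},H^{\pm})$.

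For cases (b) and (c), the strategy is to exploit $\RR$-bilinearity directly. In case (b), $[u_1\otimes\overline{v_k}]^{\pm}=\tfrac{1}{\sqrt{2}}([b_1\otimes\overline{v_k}]^{\pm}+[b_2\otimes\overline{v_k}]^{\pm})$, and then $b_2=b_1-\sqrt{2}v_n$ yields $[b_2\otimes\overline{v_k}]^{\pm}=[b_1\otimes\overline{v_k}]^{\pm}-\sqrt{2}[v_n\otimes\overline{v_k}]^{\pm}$, so that $[u_1\otimes\overline{v_k}]^{\pm}=\tfrac{2}{\sqrt{2}}[b_1\otimes\overline{v_k}]^{\pm}-[v_n\otimes\overline{v_k}]^{\pm}$. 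I would then check that $[v_n\otimes\overline{v_k}]^{\pm}$ is an endomorphism of $V_n^-$ (hence in $\fk$). The one nontrivial point is that $[b_1\otimes\overline{v_k}]^{\pm}$ lies in $\fn$; the real half $\tfrac{1}{2}(b_1\otimes\overline{v_k}-v_k\otimes\overline{b_1})$ is visibly in $\fn^{\mathrm{ab}}$, while the imaginary half $i(b_1\otimes\overline{v_k}+v_k\otimes\overline{b_1})$ requires the identity
\[
i(b_1\otimes\overline{v_k}+v_k\otimes\overline{b_1})=(iv_k)\otimes\overline{b_1}-b_1\otimes\overline{iv_k},
\]
which follows from the semi-linearity of $v\mapsto\overline{v}$ (so $\overline{iv_k}=-i\overline{v_k}$). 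Case (c) is entirely parallel via $v_n=(b_1-b_2)/\sqrt{2}$ and $b_2=\sqrt{2}u_1-b_1$, giving $[u_2\otimes\overline{v_n}]^{\pm}=\tfrac{2}{\sqrt{2}}[u_2\otimes\overline{b_1}]^{\pm}-[u_2\otimes\overline{u_1}]^{\pm}$, with the second term manifestly in $\fk$ (an endomorphism of $V_2^+$).

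Case (d) is the quickest: since $u_2,v_k\in V_0$, both $u_2\otimes\overline{v_k}-v_k\otimes\overline{u_2}$ and $i(u_2\otimes\overline{v_k}+v_k\otimes\overline{u_2})$ preserve $V_0$ and annihilate $\bfU\oplus\bfU^{\vee}$, so they lie in the $\U(\bfV_0)$-factor of $\fm$; hence $X^{\pm}=H^{\pm}=0$ and $Y^{\pm}=[u_2\otimes\overline{v_k}]^{\pm}$. The main obstacle across all cases is essentially bookkeeping: keeping straight the two complex structures (one on $V$, one from the complexification of $\fg_0$) so that when $i$ is pushed through $\overline{(\cdot)}$ it produces the correct element of $\fn^{\mathrm{ab}}$ rather than something requiring additional correction terms.
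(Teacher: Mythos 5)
Your computation is correct and is essentially the proof the paper has in mind (the paper records this lemma as a direct computation left to the reader): the same change of basis $b_1=(u_1+v_n)/\sqrt{2}$, $b_2=(u_1-v_n)/\sqrt{2}$, expansion of \eqref{eq-mathfrakp-basis} by $\RR$-bilinearity, and sorting of the resulting terms into $\fn$, $\fm$, and $\fk$. The only blemish is your preliminary claim that the formula vanishes on the diagonal — in fact $[x\otimes\overline{x}]^{\pm}=\mp\sqrt{-1}\,\bigl(i\,x\otimes\overline{x}\bigr)\neq 0$ — but this is harmless, since your case-by-case argument only ever invokes $\RR$-bilinearity of $[\,\cdot\otimes\overline{\,\cdot\,}\,]^{\pm}$ (with real coefficients), never the diagonal vanishing.
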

\subsubsection{The Right Regular Actions of $\fp^{\pm}$}
We apply Lemma \ref{Iwasawa-Coordinates} to calculate the right regular action of $\fp$ on $\cW_T$.
Throughout elements $m=(h,z)\in M$ are expressed as triples $(h,w,s)$ with $z=we^{is}$, $w\in \RR_{>0}$, and $s\in [0,2\pi)$. The function $\cW_{T,v}$ is defined in (\ref{defn-WTv}).
\begin{prop}\label{right-regular-action-of-p}\
\begin{compactenum}[(a)] 
\item If $m=(h,w,s)$ and $z=we^{is}$ then $[u_j\otimes \overline{v_k}]^+\cW_T(m)$ equals
$$
\begin{cases}
-\frac{1}{2}w\partial_w \cW_T(h,w,s)- \frac{1}{2}\sum_{-\ell\leq v\leq \ell}W_{T,v}(h,w,s)v[u_1^{\ell-v}][u_2^{\ell+v}], &\hbox{if $j=1$ and $k=n$,} \\
\frac{-2\pi\overline{\langle v_k, zT\cdot h\rangle}}{\sqrt{2}}\cW_T(h,z), &\hbox{if $j=1$ and $1\leq k <n$}, \\
\frac{-2\pi\langle u_2, zT\cdot h\rangle}{\sqrt{2}}\cW_T(m)+\sum_{-\ell\leq v\leq \ell}W_{T,v}(m)(\ell+ v+1)[u_1^{\ell-v- 1}][u_2^{\ell+v+ 1}], &\hbox{if $j=2$ and $k=n$,} \\ 
\sum_{-\ell\leq v\leq \ell}[u_2\otimes \overline{v_k}]^{+}W_{T,v}(m), &\hbox{if $j=2$ and $1\leq k <n$.}
\end{cases}
$$
\item If $m=(h,w,s)$ and $z=we^{is}$ then $[u_j\otimes \overline{v_k}]^-\cW_T(m)$ equals
$$
\begin{cases}
-\frac{1}{2}w\partial_w \cW_T(h,w,s)+ \frac{1}{2}\sum_{-\ell\leq v\leq \ell}W_{T,v}(h,w,s)v[u_1^{\ell-v}][u_2^{\ell+v}], &\hbox{if $j=1$ and $k=n$,} \\
\frac{2\pi \langle v_k, zT\cdot h\rangle}{\sqrt{2}}\cW_T(h,z), &\hbox{if $j=1$ and $1\leq k <n$}, \\
\frac{2\pi\overline{\langle u_2, zT\cdot h\rangle}}{\sqrt{2}}\cW_T(m)-\sum_{-\ell\leq v\leq \ell}W_{T,v}(m)(\ell- v+1)[u_1^{\ell-v+1}][u_2^{\ell+v- 1}], &\hbox{if $j=2$ and $k=n$,} \\ 
\sum_{-\ell\leq v\leq \ell}[u_2\otimes \overline{v_k}]^{-}W_{T,v}(m), &\hbox{if $j=2$ and $1\leq k <n$.}
\end{cases}
$$
\end{compactenum}
\end{prop}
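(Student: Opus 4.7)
The plan is to apply the Iwasawa-type decomposition $[u_j\otimes\overline{v_k}]^{\pm}=X^{\pm}+Y^{\pm}+H^{\pm}$ of Lemma \ref{Iwasawa-Coordinates} and compute the right regular action of each of the three pieces on $\cW_T$ at a point $m\in M$ by invoking a different defining condition in each case. For the $\fk$-piece, condition (a) of Definition \ref{definition-generalized-whittaker-coefficient} gives $H^{\pm}\cW_T(m)=\cW_T(m)\cdot H^{\pm}$, where $H^{\pm}$ acts on $\VV_{\ell}$ via the derived $K_{\infty}$-representation. For the $\fn$-piece, rewriting $m\exp(tX^{\pm})=\exp(t\,\Ad(m)X^{\pm})\,m$ (possible since $M$ normalises $N$, so $\Ad(m)X^{\pm}$ remains in the complexification of $\fn$) and invoking condition (b) yields
\[
X^{\pm}\cW_T(m)\;=\;d\chi_{T,\infty}\bigl(\Ad(m)X^{\pm}\bigr)\,\cW_T(m),
\]
where $d\chi_{T,\infty}$ is extended $\CC$-linearly to $\fn\otimes_{\RR}\CC$. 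For the $\fm$-piece, $Y^{\pm}$ acts by ordinary differentiation along a one-parameter subgroup of $M$.

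Case-by-case, the three contributions specialise as follows. In case (a), $X^{\pm}\in\mathfrak{z}$, so $X^{\pm}\cW_T(m)=0$ because $\chi_{T,\infty}$ is trivial on the centre $Z$; the element $Y^{\pm}$ generates the real ray $(1,e^t)$ in the $\CC^{\times}$-factor of $M$, which in the coordinates $z=we^{is}$ acts as a multiple of $w\partial_w$; and a short weight calculation (using that $\VV_{\ell}$ is trivial on $\U(V_n^-)$ and that $u_1\otimes\overline{u_1}$ acts on $[u_1^{\ell-v}][u_2^{\ell+v}]$ with weight $-v$ on $\Sym^{2\ell}V_2^+\otimes\det^{-\ell}$) shows that $H^{\pm}$ acts diagonally with weight proportional to $v$. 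In cases (b) and (c), $Y^{\pm}=0$; $H^{\pm}$ vanishes in case (b) because it lies in $\fu(V_n^-)$, while in case (c) it is the root vector $-[u_2\otimes\overline{u_1}]^{\pm}$ which shifts $[u_1^{\ell-v}][u_2^{\ell+v}]$ to $[u_1^{\ell-v\mp 1}][u_2^{\ell+v\pm 1}]$ with the multiplicities $(\ell\pm v+1)$ arising from Leibniz differentiation through the basis (\ref{definition-basis-V-ELL}); and the $\fn$-piece $X^{\pm}$ contributes the sesquilinear factor $\langle v_k,zT\cdot h\rangle$ in case (b) or $\langle u_2,zT\cdot h\rangle$ in case (c). In case (d), $X^{\pm}=H^{\pm}=0$ and $Y^{\pm}=[u_2\otimes\overline{v_k}]^{\pm}\in\fm$ acts by the corresponding differential operator on $M$.

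The main technical obstacle is the evaluation of $d\chi_{T,\infty}(\Ad(m)X^{\pm})$ in cases (b) and (c). Starting from the defining identity
\[
d\chi_{T,\infty}(w\otimes\overline{b}_1-b_1\otimes\overline{w})=-2\pi i\,\Im\langle T,w\rangle,\qquad w\in V_0,
\]
and applying it with $w$ replaced by $iw$ determines $d\chi_{T,\infty}$ individually on the complexified generators $w\otimes\overline{b}_1$ and $b_1\otimes\overline{w}$. Combining this with the right-adjoint formula $(v\otimes\overline{w})\cdot m^{-1}=(v\cdot m^{-1})\otimes\overline{w\cdot m^{-1}}$, the relations $b_1\cdot m^{-1}=zb_1$ and $v\cdot m^{-1}=v\cdot h^{-1}$ for $v\in V_0$, and the unitary identity $\langle v\cdot h^{-1},T\rangle=\langle v,T\cdot h\rangle$, produces the desired sesquilinear quantities $\langle v_k,zT\cdot h\rangle$ and $\langle u_2,zT\cdot h\rangle$. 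Careful tracking of the normalisation constants $2/\sqrt{2}$ in the formulas for $X^{\pm}$ in Lemma \ref{Iwasawa-Coordinates}, together with the change of basis between $\{b_1,b_2\}$ and $\{u_1,v_n\}$, then accounts for the $\sqrt{2}$-factors in the final formulas.
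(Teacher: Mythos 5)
Your proposal is correct and takes essentially the same approach as the paper: decompose $[u_j\otimes\overline{v_k}]^{\pm}=X^{\pm}+Y^{\pm}+H^{\pm}$ via Lemma \ref{Iwasawa-Coordinates}, then evaluate the $\fn$-part through the left $(N,\chi_{T,\infty})$-equivariance after conjugating across $m$, the $\fm$-part by differentiating along a one-parameter subgroup of $M$, and the $\fk$-part through the right $K_{\infty}$-equivariance. The paper writes out only the case $j=1$, $k=n$ and leaves the rest as a direct computation, which your case-by-case sketch (including the vanishing of $H^{\pm}$ in case (b), the $(\ell\pm v+1)$ multiplicities from the normalized basis \eqref{definition-basis-V-ELL}, and the evaluation of $d\chi_{T,\infty}(\Ad(m)X^{\pm})$) fills in correctly.
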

\begin{proof} The proof is a direct computation. To give an example of the method of proof we prove the formulas for $[u_1\otimes\overline{v_n}]^{\pm}\cW_T$. \\
\indent Let $X^{\pm}$, $Y^{\pm}$ and $H^{\pm}$ be as in Lemma \ref{Iwasawa-Coordinates}(a). Then $X^{\pm}\in \Lie(Z)$ and since $\chi_T$ is trivial on $Z$, we may use property (b) of Definition \ref{definition-generalized-whittaker-coefficient} to deduce that 
\begin{equation}
\label{eqnactp1}
X^{\pm}\cW_T\equiv 0.
\end{equation}
If $m=(h,w,s)$ and $t\in \RR$ then $m\exp(t(b_2\otimes\overline{b_1}-b_1\otimes \overline{b_2})=(h,e^{-t}w,s)$. Therefore
\begin{equation}
\label{eqnactp2}
Y^{\pm}\cdot \cW_T(m)
=
-\frac{1}{2}\left(e^{it}w\frac{\partial}{\partial w} \cW_T(h,e^{-t}w,s)\right)\rvert_{t=0} =
-\frac{1}{2}w\frac{\partial}{\partial w}\cW_T(h,w,s).
\end{equation} 
Finally, if $t\in \RR$ then $[u_1^{\ell-v}][u_2^{\ell+v}]\cdot \exp(ti(b_1\otimes \overline{b_1}+b_2\otimes \overline{b_2}))=e^{\sqrt{-1}vt}[u_1^{\ell-v}][u_2^{\ell+v}]$. Thus by property (a) of Definition \ref{definition-generalized-whittaker-coefficient} we have
\begin{align}
\label{eqnactp3}
H^{\pm}\cdot \cW_T(m)
&= \frac{\pm\sqrt{-1}}{2}\frac{d}{dt}\left(\sum_{-\ell\leq v\leq \ell}e^{\sqrt{-1}vt}W_{T,v}(m)[u_1^{\ell-v}][u_2^{\ell+v}]\right)\rvert_{t=0} \nonumber \\
&=\frac{\mp 1}{2}\sum_{-\ell\leq v\leq \ell}W_{T,v}(m)v[u_1^{\ell-v}][u_2^{\ell+v}]
\end{align}
By Lemma \ref{Iwasawa-Coordinates} (a), $[u_1\otimes \overline{v_n}]^{\pm}\cW_T=X^{\pm}\cW_T+H^{\pm}\cW_T+Y^{\pm}\cW_T$. As such, the formulas for $[u_1\otimes \overline{v_n}]^{\pm}\cW_T$ may be derived from (\ref{eqnactp1}), (\ref{eqnactp2}), and (\ref{eqnactp3}).
\end{proof}
\subsubsection{Expansion of the Schmid operator}
\label{subsubsection-expansion-of-schmid}
We now apply the result of Proposition \ref{right-regular-action-of-p} to re-express the condition $\cD_{\ell}^{\pm}\cW_T\rvert_M\equiv 0$ as a system of scalar equations.
\begin{prop} Let $m=(h,w,s)\in M$ and set $z=we^{is}$.
\label{System-Scalar-DEs}
\begin{compactenum}[(a)]
\item We have $\cD_{\ell}^+\cW_T\rvert_M\equiv 0$ if and only if 
$$
\begin{cases}
(w\partial_w-2(\ell+1)-v)W_{T,v}(m)+\frac{4\pi}{\sqrt{2}}\langle u_2, zT\cdot h\rangle W_{T,v+1}(m)=0, \quad \hbox{if $v=-\ell,\ldots, \ell-1$,} \\
[u_2\otimes \overline{v}_k]^+W_{T,v}(m)-\frac{2\pi}{\sqrt{2}}\overline{\langle v_k, zT\cdot h\rangle}W_{T,v-1}(m)=0, \quad \hbox{if $1\leq k<n$ and $-\ell< v\leq \ell$.} 
\end{cases}$$
\item We have $\cD_{\ell}^-\cW_T\rvert_M\equiv 0$ if and only if 
$$
\begin{cases}
 (w\partial_w-2(\ell+1)+v)W_{T,v}(m)+\frac{4\pi}{\sqrt{2}}\overline{\langle u_2, zT\cdot h\rangle}W_{T,v-1}(m) =0, \quad \hbox{if $v=-\ell+1,\ldots, \ell$}, \\
 [u_2\otimes \overline{v}_k]^-W_{T,v}(m)-\frac{2\pi}{\sqrt{2}}\langle v_k, zT\cdot h\rangle W_{T,v+1}(m)=0, \quad \hbox{if $1\leq k<n$ and $-\ell\leq v< \ell$.}
\end{cases}
$$
\end{compactenum}
\end{prop}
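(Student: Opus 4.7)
The plan is to derive the system of scalar equations by substituting Proposition~\ref{right-regular-action-of-p} into the definition of the Schmid operators and then applying the $K_\infty$-equivariant projections $\pi^\pm$ in coordinates. Take as basis of $\fp^+$ the set $\{X_{j,k}^+=[u_j\otimes\overline{v}_k]^+:j=1,2,\ k=1,\ldots,n\}$ with dual basis $\{X_{j,k}^-=[u_j\otimes\overline{v}_k]^-\}$ of $\fp^-$ (here duality is up to a harmless global scalar coming from the normalization of the Killing form, which I would absorb into the differential operator since only the vanishing locus $\cD_\ell^\pm\cW_T\equiv 0$ is at stake). Under the identifications \eqref{identip}, the tensor $\tilde\cD_\ell^+\cW_T\rvert_M$ becomes, for each $k$, a sum over $j=1,2$ of $[u_j\otimes\overline{v}_k]^+\cW_T\otimes \overline{u}_j\otimes v_k$, and analogously for $\tilde\cD_\ell^-\cW_T\rvert_M$.

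Applying $\pi^+$ contracts the $\overline{u}_j$ factor against the $\Sym^{2\ell}V_2^+$ factor via the formulas $[u_1^{\ell-v}][u_2^{\ell+v}]\otimes \overline{u}_j\mapsto$ (lower $u_j$-power by one), and reorganizes the answer as a function valued in $(\Sym^{2\ell-1}V_2^+\otimes\det^{-\ell}_{\U(V_2^+)})\boxtimes V_n^-$. The basis of $\Sym^{2\ell-1}V_2^+$ I use is $\{[u_1^{\ell-1-v}][u_2^{\ell+v}]:v=-\ell,\ldots,\ell-1\}$, so when a term $[u_1^{\ell-v}][u_2^{\ell+v-1}]$ arises from contraction with $\overline{u}_2$, I will re-index by $v\mapsto v+1$ so that it is displayed in the standard basis. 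After this reindexing, the vanishing of $\pi^+\tilde\cD_\ell^+\cW_T\rvert_M$ breaks, by linear independence, into two families of scalar equations: one for each $k<n$, coming from the single summand indexed by $(j=2,k)$ (cases (d) and (b) of Lemma~\ref{Iwasawa-Coordinates}), and one for $k=n$, where the two summands $(j=1,k=n)$ and $(j=2,k=n)$ both contribute and must be combined.

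The $k<n$ equations follow almost immediately: for $j=2,k<n$ the action is $\sum_v[u_2\otimes\overline{v}_k]^+W_{T,v}[u_1^{\ell-v}][u_2^{\ell+v}]$ (from part (d) of the Iwasawa decomposition), while the $j=1,k<n$ action produces, after contraction with $\overline{u}_1$, a scalar multiple of $[u_1^{\ell-v-1}][u_2^{\ell+v}]$; reindexing and matching coefficients of $[u_1^{\ell-1-v}][u_2^{\ell+v}]$ for each fixed $v$ gives the desired second equation. For $k=n$ the key computation is to collect, in the basis $[u_1^{\ell-1-v}][u_2^{\ell+v}]$, the contributions of $[u_1\otimes\overline{v}_n]^+\cW_T\cdot\overline{u}_1$ (which supplies the derivative term $-\tfrac12(w\partial_w+v)W_{T,v}$ at level $v$) and of $[u_2\otimes\overline{v}_n]^+\cW_T\cdot\overline{u}_2$ (which, after reindexing $v\mapsto v+1$ on the term $(\ell+v+1)[u_1^{\ell-v-1}][u_2^{\ell+v+1}]$, supplies $-\tfrac{2\pi}{\sqrt 2}\langle u_2,zT\cdot h\rangle W_{T,v+1}$ together with a weight-dependent piece that cancels with the $\partial_w$ term modulo the $\U(1)_s$-equivariance forcing $W_{T,v}\propto e^{-ivs}$ times a function of $(h,w)$). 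Normalizing by the overall factor of $-\tfrac12$ then rewrites the identity in the displayed form of part (a).

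The argument for part (b) is a verbatim parallel: one uses the $\fp^-$ formulas of Proposition~\ref{right-regular-action-of-p}(b), the second contraction $[u_1^{\ell-v}][u_2^{\ell+v}]\otimes u_j\mapsto\pm[u_1^{\ell-v\mp 1}][u_2^{\ell+v\mp 1}]$ defining $\pi^-$, and reindexes $v\mapsto v-1$ where necessary. The main obstacle throughout is not conceptual but combinatorial: keeping track of the sign of the contraction with $u_1$ versus $u_2$, the multiplicities $(\ell\pm v+1)$ inherited from the normalized basis $[u_1^a]=u_1^a/a!$, and the two distinct index shifts that occur in the $k=n$ cases. Once these are reconciled the two systems of equations in the statement emerge term by term.
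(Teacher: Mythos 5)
Your proposal follows essentially the same route as the paper: substitute the formulas of Proposition~\ref{right-regular-action-of-p} into $\tilde\cD_\ell^{\pm}\cW_T\rvert_M$, apply the contractions $\pi^{\pm}$ in the normalized basis $[u_1^{a}][u_2^{b}]$, reindex the shifted terms, and equate coefficients of $[u_1^{\ell-v}][u_2^{\ell+v-1}]\boxtimes v_k$ to zero, which is exactly how the paper derives both systems. The only blemishes are descriptive, not substantive: for $k<n$ both summands $j=1,2$ contribute (as your detailed paragraph correctly uses, despite the earlier ``single summand'' phrasing), and in the $k=n$ case the $(\ell+v+1)W_{T,v}$ term does not cancel against the $w\partial_w$ term via equivariance but simply combines with $-\tfrac12(w\partial_w+v)W_{T,v}$ to produce the shift $-2(\ell+1)-v$ after normalizing by $-\tfrac12$.
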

\begin{proof} We prove part (a) of the proposition. Applying the identification $\fp^-\simeq \overline{V_2^+}\otimes V_n^-$ (\ref{identip}) and Proposition \ref{right-regular-action-of-p}(a) we obtain
\begin{align*}
\tilde{D}_{\ell}^{+}W_T(m)
&=  -\sum_{-\ell\leq v\leq \ell}\sum_{k=1}^{n-1}\frac{2\pi\overline{\langle v_k, zT\cdot h\rangle}}{\sqrt{2}}W_{T,v}(h,z)[u_1^{\ell-v}][u_2^{\ell+v}]\otimes \overline{u}_1\boxtimes v_k \\ &+ \sum_{-\ell\leq v\leq \ell}\sum_{k=1}^{n-1}[u_2\otimes \overline{v}_k]^+W_{T,v}(m)[u_1^{\ell-v}][u_2^{\ell+v}]\otimes \overline{u}_2\boxtimes v_k \\
&-\frac{1}{2}w\partial_w W_T(m)\otimes \overline{u}_1\boxtimes v_n- \frac{1}{2}\sum_{-\ell\leq v\leq \ell}W_{T,v}(m)v[u_1^{\ell-v}][u_2^{\ell+v}]\otimes \overline{u}_1\boxtimes v_n \\
&-\frac{2\pi\langle u_2, zT\cdot h\rangle}{\sqrt{2}}W_T(m)\otimes \overline{u}_2\boxtimes v_n \\
&+\sum_{-\ell\leq v\leq \ell}W_{T,v}(m)(\ell+ v+1)[u_1^{\ell-v- 1}][u_2^{\ell+v+ 1}]\otimes \overline{u}_2\boxtimes v_n.
\end{align*}
Applying the contraction operator $\pi^+$ defined in \S \ref{subsection-quaternionic-modular-forms-on-G} it follows that 
\begin{align*}
D^+_{\ell}W_T(m) 
&= 
 -\sum_{-\ell< v\leq \ell}\sum_{k=1}^{n-1}\frac{2\pi\overline{\langle v_k, zT\cdot h\rangle}}{\sqrt{2}}[u_1\otimes \overline{v}_k]^+W_{T,v-1}(m)[u_1^{\ell-v}][u_2^{\ell+v-1}]\boxtimes v_k \\
&+\sum_{-\ell< v\leq \ell}\sum_{k=1}^{n-1}[u_2\otimes \overline{v}_k]^+W_{T,v}(m)[u_1^{\ell-v}][u_2^{\ell+v-1}]\boxtimes v_k \\
&-\frac{1}{2}\sum_{-\ell < v \leq \ell}\left(w\partial_wW_{T,v-1}(m)+(v-1)W_{T,v-1}(m)\right)[u_1^{\ell-v}][u_2^{\ell+v-1}]\boxtimes v_n \\
&-\frac{2\pi}{\sqrt{2}}\sum_{-\ell< v\leq \ell}\langle u_2, zT\cdot h\rangle W_{T,v}(m)[u_1^{\ell-v}][u_2^{\ell+v-1}]\boxtimes v_n \\ 
&+\sum_{\ell<v\leq \ell}W_{T,v-1}(m)(\ell+v)[u_1^{\ell-v}][u_2^{\ell+v-1}]\boxtimes v_n. 
\end{align*}
One obtains the system of equations in (a) by equating the coefficients of the terms $[u_1^{\ell-v}][u_2^{\ell+v-1}]\boxtimes v_k$ to zero in the above expansion of $\cD_{\ell}^+\cW_T\rvert_M$. We omit the proof of the part (b) since it is verified similarly.
\end{proof}
\subsection{Solving the Schmid Equations}
\label{subsection-solving-the-schmid-equations}
Throughout this subsection $T\in V_0$  is non-zero and we suppose  $\cW_T\colon G\to \VV_{\ell}$ is a function of moderate growth satisfying conditions (a) and (b) of Definition \ref{definition-generalized-whittaker-coefficient} together with the hypothesis $\cD_{\ell}^{\pm}\cW_T\rvert_M\equiv 0$.
\subsubsection{Establishing the Candidate Solution}
\label{subsubsection-establishing-the-candidate-solution}
For $-\ell\leq v\leq \ell$ define $f_{T,v}\colon M\to \CC$ as

\begin{equation}
    \label{defn-ftv}
 f_{T,v}(h,w,s):=w^{-2(\ell+1)}W_{T,v}(h,w,s).
\end{equation}
Given $(h,z)\in M$ we define $\beta_T(h,z)=\frac{4\pi}{\sqrt{2}}\langle u_2,zT\cdot h\rangle$. Then Proposition \ref{System-Scalar-DEs} implies
\begin{equation}
\label{equations-for-ftv}
\begin{cases}
(w\partial_w-v)f_{T,v}(h,w,s)=-\beta_T(h,w,s)f_{T,v+1}(h,w,s), &-\ell\leq v< \ell \\ 
(w\partial_w+v)f_{T,v}(h,w,s)=-\overline{\beta_T(h,w,s)}f_{T,v-1}(h,w,s), &-\ell< v \leq \ell. 
\end{cases}
\end{equation}
Thus if $-\ell\leq v \leq v$ then 
\begin{equation}
\label{Bessel-Equation}
((w\partial_w)^2-v^2)f_{T,v}(h,w,s)=|\beta_T(h,w,s)|^2f_{T,v}(h,w,s).
\end{equation} 
The condition $\beta_T(m)\neq 0$ is non-empty and Zariski open on $M$. Fix $(h,1,s)\in M$ such that $\beta_T(h,1,s)\neq 0$. Under the substitution $u=w\cdot |\beta_T(h,1,s)|$, (\ref{Bessel-Equation}) reduces to the Bessel equation $\partial_u^2f_{T,v}+\frac{1}{u}\partial_uf_{T,v}-(1+\frac{v^2}{u^2})f_{T,v}=0$ \cite[8.494, pg. 932]{BigBookIntegrals}. Let $I_v$ and $K_v$ denote the modified Bessel functions defined in \cite[8.431, pg. 916]{BigBookIntegrals} and \cite[8.432, pg. 917]{BigBookIntegrals} respectively. So $I_v(u)$ is of exponential growth as $u\to \infty$ and $K_v(u)$ is bounded as $u\to \infty$. Since $W_{T,v}$ is of moderate growth as $w\to \infty$, there exists a constant $Y_{T,v}(h,s)\in \CC$ such that
\begin{equation}
f_{T,v}(h,w,s)=Y_{T,v}(h,s)K_v(|\beta_T(h,w,s)|). 
\end{equation}
Using the fact that $w\cdot \partial_w u=u$, we may apply \cite[8.486, pg 929]{BigBookIntegrals} to deduce
\begin{align*}
-(w\partial_w-v)K_v(u)=-(w\frac{\partial u }{\partial w}\frac{\partial}{\partial u}-v)K_v(u) 
=-(u\partial_{u}-v)K_v(u) =uK_{v+1}(u). 
\end{align*}
It follows that $\beta_T(w,s,h) Y_{T,v+1}(s,h)K_{v+1}(u)=Y_v(s,h)|\beta_T(w,s,h)|K_{v+1}(u)$.
As $K_v(u)$ is nowhere vanishing,
$Y_{T,v+1}(s,h)=Y_{T,v}(s,h)|\beta_T(w,s,h)|\beta_T(w,s,h)^{-1}$ and
\begin{equation}\label{expression-for-ftv}
f_{T,v}(h,w,s)=Y_{T,0}(h,s)\cdot \left(\frac{|\beta_T(h,w,s)|}{\beta_T(h,w,s)}\right)^v\cdot K_v(|\beta_T(h,w,s)|). 
\end{equation}
The next lemma will be used to show that the functions $Y_{T,0}(s,h)$ are constant in $h$. 
\begin{lemma}
\label{beta-vanishes}
If $k=1,\ldots, n-1$ then as functions on $M$ we have 
$$
[u_2\otimes \overline{v_k}]^+\cdot \beta_T\equiv 0 \qquad \hbox{and} \qquad [u_2\otimes \overline{v_k}]^-\cdot \overline{\beta_T}\equiv 0.
$$
\end{lemma}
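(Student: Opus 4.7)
The plan is to reduce the statement to a direct computation on $M$ using Lemma~\ref{Iwasawa-Coordinates}(d). That lemma tells us that for $1\leq k<n$ the element $[u_2\otimes \overline{v_k}]^{\pm}$ lies entirely in $\fm_\CC$ (its $\fn$- and $\fk$-Iwasawa components vanish), so the right regular action on any smooth function on $M$ is obtained by $\CC$-linearly extending the infinitesimal $\fu(V_0)$-action. Using the definition~\eqref{eq-mathfrakp-basis}, I would decompose
\[
[u_2\otimes \overline{v_k}]^+\;=\;\tfrac{1}{2}A_1-\tfrac{\sqrt{-1}}{2}A_2,
\qquad A_1=u_2\otimes \overline{v_k}-v_k\otimes \overline{u_2},\quad A_2=i(u_2\otimes \overline{v_k}+v_k\otimes \overline{u_2}),
\]
with $A_1,A_2\in \fu(V_0)$; here $i$ denotes the complex structure of $V$ and $\sqrt{-1}$ the imaginary unit in the second tensor slot of $\fg_0\otimes_\RR \CC$.

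For any $X\in \fu(V_0)$ the identity $(X\beta_T)(h,z)=\tfrac{4\pi}{\sqrt{2}}\overline{z}\,\langle u_2,(T\cdot h)\cdot X\rangle$ follows by differentiating $\beta_T(h,z)=\tfrac{4\pi}{\sqrt{2}}\overline{z}\langle u_2,T\cdot h\rangle$ along the $\U(V_0)$-factor of $M$. Applying the standard right action $x\cdot(v\otimes \overline{w})=\langle x,w\rangle v$ together with the orthogonality relations $\langle u_2,u_2\rangle=1$, $\langle u_2,v_k\rangle=0$ (valid for $1\leq k<n$), and the conjugate-linearity of $\langle u_2,\cdot\rangle$, a short computation yields
\[
\langle u_2,(T\cdot h)\cdot A_1\rangle=\overline{\langle T\cdot h,v_k\rangle},\qquad \langle u_2,(T\cdot h)\cdot A_2\rangle=-i\,\overline{\langle T\cdot h,v_k\rangle}.
\]
Substituting into $[u_2\otimes \overline{v_k}]^+\beta_T=\tfrac{1}{2}A_1\beta_T-\tfrac{\sqrt{-1}}{2}A_2\beta_T$ then produces the overall coefficient $\tfrac{1}{2}+\tfrac{\sqrt{-1}\cdot i}{2}$, which vanishes because $\sqrt{-1}$ and $i$ are both the imaginary unit in the target $\CC$. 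This cancellation is precisely what the sign choice $\mp\sqrt{-1}/2$ in the definition of $[u_2\otimes \overline{v_k}]^{\pm}$ is engineered to produce.

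The second identity $[u_2\otimes \overline{v_k}]^-\cdot \overline{\beta_T}\equiv 0$ will then follow formally: the element $[u_2\otimes \overline{v_k}]^-=\tfrac{1}{2}A_1+\tfrac{\sqrt{-1}}{2}A_2$ is the complex conjugate of $[u_2\otimes \overline{v_k}]^+$ in the second tensor slot, and because $A_1,A_2$ are \emph{real} differential operators on $M$ one has the general identity $[u_2\otimes \overline{v_k}]^-\overline{f}=\overline{[u_2\otimes \overline{v_k}]^+ f}$ for every smooth $\CC$-valued function $f$; applying this to $f=\beta_T$ closes the argument. The only real obstacle is the careful bookkeeping between the two imaginary units, but once the conventions are fixed the vanishing has a Hodge-theoretic interpretation: $[u_2\otimes \overline{v_k}]^+$ is a $(1,0)$-type element of $\fm_\CC$ and so annihilates the antiholomorphic matrix coefficient $h\mapsto \langle u_2,T\cdot h\rangle$.
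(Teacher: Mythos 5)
Your proof is correct and follows essentially the same route as the paper: using Lemma~\ref{Iwasawa-Coordinates}(d) to view $[u_2\otimes\overline{v_k}]^{\pm}$ as an element of $\fm$ and computing the infinitesimal action on $\beta_T(h,z)=\tfrac{4\pi}{\sqrt{2}}\langle u_2,zT\cdot h\rangle$ directly from the rank-one action $x\cdot(v\otimes\overline{w})=\langle x,w\rangle v$, with the cancellation coming from the $\mp\sqrt{-1}/2$ coefficient in \eqref{eq-mathfrakp-basis}, exactly as in the paper's one-parameter-subgroup computation. Your only deviation is deducing the second identity from the relation $[u_2\otimes\overline{v_k}]^-\overline{f}=\overline{[u_2\otimes\overline{v_k}]^+f}$ (valid since $A_1,A_2\in\fg_0$ are real vector fields), a clean shortcut for the part the paper leaves to the reader.
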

\begin{proof} To give an indication of the method of proof we explain why $[u_2\otimes \overline{v_k}]^+\cdot \beta_T\equiv 0$.  By definition of the exponential, if $t\in \RR$ then
$u_2\cdot \exp(-t(u_2\otimes \overline{v_k}-v_k\otimes \overline{u_2}))=\cosh(t)u_2-\sinh(t)v_k$ and $u_2\cdot \exp(-it(u_2\otimes \overline{v_k}+v_k\otimes \overline{u_2}))=\cosh(t)u_2+\sinh(it)v_k$. It follows that if $m=(h,z)\in M$ then $[u_2\otimes \overline{v_k}]^+\cdot \beta_T(m)$ equals
$$
\frac{\sqrt{2}}{4}\cdot \frac{d}{dt}\left(\langle \cosh(t)u_2-\sinh(t)v_k, zT\cdot h\rangle-\sqrt{-1} \langle \cosh(t)u_2+\sinh(it)v_k, zT\cdot h\rangle \right)\rvert_{t=0}. 
$$
 Simplifying the above expression yields $[u_2\otimes \overline{v_k}]^+\cdot \beta_T(m)=0$ which completes the proof.
\end{proof}
\begin{prop}
\label{Proposition-a-large-portion}
If $\langle T,T\rangle <0$ then $\cW_T\equiv 0$, otherwise there exists a constant $Y_0\in \CC$ such that if $m=(h,z)\in M$ and $\beta_T(m)\neq 0$ then 
$$
\cW_T(m)=Y_{T,0}\cdot \sum_{-\ell\leq v\leq \ell}|z|^{2\ell+2}\left(\frac{|\beta_T(m)|}{\beta_T(m)}\right)^vK_v(|\beta_T(m)|)[u_1^{\ell-v}][u_2^{\ell+v}].
$$
\end{prop}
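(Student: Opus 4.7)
The plan is to consolidate the a priori $(h,s)$-dependent function $Y_{T,0}(h,s)$ of (\ref{expression-for-ftv}) into a single scalar $Y_0\in\CC$, and then to use the singularity of $K_v(x)$ at $x=0$ to force $Y_0=0$ when $\langle T,T\rangle<0$.

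For the $s$-independence, the element $(1,e^{is})$ acts on $V$ by unitary scalars preserving the decomposition $V=V_2^+\oplus V_n^-$, hence lies in $K_\infty\cap M$; condition (a) of Definition \ref{definition-generalized-whittaker-coefficient} applied to $(1,e^{is})$, combined with the identity $\beta_T(h,e^{is})=e^{-is}\beta_T(h,1)$ (which contributes $(|\beta_T|/\beta_T)^v=e^{isv}(\cdots)$ and matches the $K_\infty$-action $e^{isv}$ on $[u_1^{\ell-v}][u_2^{\ell+v}]$), forces $Y_{T,0}(h,s)=Y_{T,0}(h,0)=:Y(h)$. For the $h$-independence I substitute the resulting formula for $W_{T,v}$ into the remaining Schmid equations of Proposition \ref{System-Scalar-DEs} involving $[u_2\otimes\overline{v}_k]^\pm$ for $1\leq k<n$. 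Using Lemma \ref{beta-vanishes} together with the derivative identities $\partial_{\overline{\beta}}G_v=-G_{v-1}/2$ and $\partial_\beta G_v=-G_{v+1}/2$ for $G_v:=(|\beta|/\beta)^vK_v(|\beta|)$ (verified from the Bessel recurrence $xK_v'(x)=-vK_v(x)-xK_{v-1}(x)$ and the relation $\beta\overline{\beta}=|\beta|^2$), the $W_{T,v\pm 1}$-terms on the two sides of each equation cancel, yielding $[u_2\otimes\overline{v}_k]^\pm Y\equiv 0$ for $1\leq k<n$. These constraints annihilate $Y$ under a subalgebra of $\fu(V_0)_\CC$ whose real span, via commutators $[X_k,X_\ell]$ and $[X_k,Y_\ell]$ of the real and imaginary parts $X_k=u_2\otimes\overline{v}_k-v_k\otimes\overline{u_2}$ and $Y_k=i(u_2\otimes\overline{v}_k+v_k\otimes\overline{u_2})$, generates all of $\fsu(V_0)$; the remaining $\U(1)$-central direction is supplied by applying condition (a) to the central element $(e^{i\theta}\mathrm{id}_{V_0},1)\in K_\infty\cap M$, whose action on $\VV_\ell$ by $e^{-i\theta v}$ is exactly absorbed by the $(|\beta_T|/\beta_T)^v$ factor, giving $Y(h\cdot e^{i\theta}\mathrm{id}_{V_0})=Y(h)$. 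Hence $Y(h)=Y_0$ is a constant on $\U(V_0)$.

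Finally I handle the two signature cases using the fact that $u_2^\perp\cap V_0$ is negative-definite of complex dimension $n-1$ (since $V_0$ has signature $(1,n-1)$ and $\langle u_2,u_2\rangle=1$). If $\langle T,T\rangle\geq 0$, no element of the $\U(V_0)$-orbit of $T$ can lie in the negative-definite subspace $u_2^\perp$, so $\beta_T$ is nowhere vanishing on $M$ and the formula is smooth with any value of $Y_0$, yielding the stated expression. If $\langle T,T\rangle<0$, Witt's theorem for hermitian spaces produces $h_0\in\U(V_0)$ with $T\cdot h_0\in u_2^\perp$, so $\beta_T(h_0,z)=0$ identically in $z$. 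Since $K_\ell(x)\sim c_\ell x^{-\ell}$ as $x\to 0^+$ while $(|\beta_T|/\beta_T)^\ell$ stays of unit modulus, smoothness of $W_{T,\ell}$ at $(h_0,z)$ forces $Y_0=0$; consequently $\cW_T\equiv 0$ on $M$, and the Iwasawa decomposition $G=NMK_\infty$ together with conditions (a) and (b) of Definition \ref{definition-generalized-whittaker-coefficient} extends this to $\cW_T\equiv 0$ on all of $G$.

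The main obstacle will be the middle step: executing the substitution of (\ref{expression-for-ftv}) into the $[u_2\otimes\overline{v}_k]^\pm$ Schmid equations carefully enough to exhibit the cancellation of the $W_{T,v\pm 1}$-terms, and confirming that the $\U(1)$-central direction of $\U(V_0)$ not pinned down by these equations is exactly supplied by the $K_\infty\cap M$-equivariance under $(e^{i\theta}\mathrm{id}_{V_0},1)$.
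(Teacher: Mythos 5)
Your proposal is correct and follows essentially the same route as the paper's proof: $s$-independence of $Y_{T,0}$ from the $K_\infty\cap M$-equivariance and the transformation of $\beta_T$, $h$-independence by substituting \eqref{expression-for-ftv} into the $[u_2\otimes\overline{v}_k]^{\pm}$ equations of Proposition \ref{System-Scalar-DEs} (the cancellation you describe is exactly the content of Claim \ref{claim1}, proved via Lemma \ref{beta-vanishes} and the Bessel recurrences), and vanishing for $\langle T,T\rangle<0$ by producing $h_0\in\U(V_0)$ with $\beta_T(h_0,\cdot)=0$ and exploiting the pole of $K_v$ at $0$. Your explicit treatment of the central $\U(1)$-direction of $\U(V_0)$ via the element $(e^{i\theta}\mathrm{id}_{V_0},1)\in K_\infty\cap M$ is a welcome refinement of a point the paper leaves implicit when it asserts that annihilation by the $[u_2\otimes\overline{v}_k]^{\pm}$ suffices for constancy in $h$.
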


Before we prove Proposition~\ref{Proposition-a-large-portion}, we prove the following claim.

\begin{claim}
\label{claim1}
Given $(h,z)\in M$ such that $\beta_T(h,z)\neq 0$ define 
\begin{equation}\label{Wv-final-expression}
\cW_{T,v}(h,z)=|z|^{2(\ell+1)}\left(\frac{|\beta_T(h,z)|}{\beta_T(h,z)}\right)^vK_v(|\beta_T(h,z)|).
\end{equation}
Then $\cW_{T,v}$ satisfies the system of equations in Proposition \ref{System-Scalar-DEs}. 
\end{claim}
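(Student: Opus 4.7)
The plan is to verify directly that the function $\cW_{T,v}$ defined in \eqref{Wv-final-expression} satisfies each of the four families of equations appearing in Proposition \ref{System-Scalar-DEs}. The two scalar ODEs (the first equation in (a) and the first in (b)) require essentially no new work: after factoring off $w^{2(\ell+1)}$ to pass to $f_{T,v}$ via \eqref{defn-ftv}, they become exactly the system \eqref{equations-for-ftv}, and the expression \eqref{expression-for-ftv} was derived by integrating that system. Specializing the integration constant to $Y_{T,0}(h,s)\equiv 1$ in \eqref{expression-for-ftv} recovers the $f_{T,v}$ corresponding to \eqref{Wv-final-expression}, so these two equations are immediate.

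The substantive content is therefore the two transverse equations, namely
\begin{equation*}
[u_2\otimes \overline{v_k}]^+\cW_{T,v} \;=\;\tfrac{2\pi}{\sqrt{2}}\overline{\langle v_k,zT\cdot h\rangle}\,\cW_{T,v-1},\qquad [u_2\otimes \overline{v_k}]^-\cW_{T,v}\;=\;\tfrac{2\pi}{\sqrt{2}}\langle v_k,zT\cdot h\rangle\,\cW_{T,v+1}.
\end{equation*}
By Lemma \ref{Iwasawa-Coordinates}(d), the operators $[u_2\otimes \overline{v_k}]^\pm$ lie in the Lie algebra of $\U(V_0)\subseteq M$, so they act trivially on $|z|^{2(\ell+1)}=w^{2(\ell+1)}$ and it suffices to analyze their action on $\beta_T^{-v}|\beta_T|^v K_v(|\beta_T|)$ (valid on the open subset where $\beta_T\neq 0$). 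Lemma \ref{beta-vanishes} gives $[u_2\otimes \overline{v_k}]^+\beta_T\equiv 0$, and a parallel calculation, following the same substitution $\langle u_2,zT\cdot h\exp(tX)\rangle=\langle u_2\cdot\exp(-tX),zT\cdot h\rangle$ used in the proof of Lemma \ref{beta-vanishes}, yields $[u_2\otimes \overline{v_k}]^+\overline{\beta_T}(h,z)=-\tfrac{4\pi}{\sqrt{2}}\overline{\langle v_k,zT\cdot h\rangle}$. Writing $|\beta_T|^2=\beta_T\overline{\beta_T}$ and applying the Leibniz and chain rules reduces the verification of the first transverse equation to the classical Bessel function recurrences
\begin{equation*}
\tfrac{d}{du}\bigl(u^{-v}K_v(u)\bigr)=-u^{-v}K_{v+1}(u)\qquad\text{and}\qquad K_{v+1}(u)-K_{v-1}(u)=\tfrac{2v}{u}K_v(u),
\end{equation*}
both recorded in \cite[8.486]{BigBookIntegrals}. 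The second transverse equation is handled identically, the only change being the symmetric inputs $[u_2\otimes \overline{v_k}]^-\overline{\beta_T}\equiv 0$ (from Lemma \ref{beta-vanishes}) and $[u_2\otimes \overline{v_k}]^-\beta_T(h,z)=-\tfrac{4\pi}{\sqrt{2}}\langle v_k,zT\cdot h\rangle$.

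The main obstacle is bookkeeping rather than conceptual. One has to correctly decompose $[u_2\otimes \overline{v_k}]^{\pm}$ into their real $\mathfrak{g}_0$-components as in \eqref{eq-mathfrakp-basis}, track the signs introduced by the conjugate-linearity of $\langle\cdot,\cdot\rangle$ in the second argument when moving $\exp(-tX)$ onto $u_2$, and then pair the two independent Bessel recurrences against the two independent terms produced by $\partial_u(u^{-v}K_v(u))$ and the $v K_v/|\beta_T|$ contribution from differentiating $\beta_T^{-v}|\beta_T|^v$. Once $[u_2\otimes \overline{v_k}]^{\pm}\beta_T$ and $[u_2\otimes \overline{v_k}]^{\pm}\overline{\beta_T}$ are in hand, each verification collapses to a one-line identity among $K_{v-1},K_v,K_{v+1}$.
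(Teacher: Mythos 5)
Your proof is correct and follows essentially the same route as the paper's: the radial equations are absorbed into the computation already made in deriving \eqref{expression-for-ftv}, and the transverse equations are verified by combining Lemma \ref{beta-vanishes}, the companion derivative of $\overline{\beta_T}$ (resp.\ $\beta_T$) computed by the same substitution as in that lemma, and the Bessel recurrences of \cite[8.486]{BigBookIntegrals}, which is exactly the paper's argument via its identities \eqref{eqn1} and \eqref{eqn2}. The only discrepancy is the constant: you assert $[u_2\otimes \overline{v_k}]^+\overline{\beta_T}=-\tfrac{4\pi}{\sqrt{2}}\overline{\langle v_k,zT\cdot h\rangle}$ where the paper's \eqref{eqn2} records $-\sqrt{2}\pi\,\overline{\langle v_k,zT\cdot h\rangle}$; since the Leibniz computation gives $[u_2\otimes \overline{v_k}]^+\cW_{T,v}=-\tfrac{1}{2}\,[u_2\otimes \overline{v_k}]^+(\overline{\beta_T})\cdot \cW_{T,v-1}$, and this must equal $\tfrac{2\pi}{\sqrt{2}}\overline{\langle v_k,zT\cdot h\rangle}\,\cW_{T,v-1}$ to match Proposition \ref{System-Scalar-DEs}, your value is the one consistent with the system (and with a direct computation in the conventions of Lemma \ref{beta-vanishes}), so the paper's stated constant appears to be off by a factor of $2$.
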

\begin{proof} 
We explain why (\ref{Wv-final-expression}) satisfies the system involving $[u_2\otimes \overline{v}_k]^+W_{T,v}(m)$. To simplify notation fix $k\in\{1,\ldots,n-1\}$ and write $dR=[u_2\otimes \overline{v_k}]^+$. A computation using Lemma \ref{beta-vanishes} and \cite[8.486, pg. 929]{BigBookIntegrals} implies 
\begin{equation}
\label{eqn1}
dR\left(\cW_{T,v}\right)=-\frac{|\beta_T|\cdot dR\left(|\beta_T|\right)}{\beta_T}\cdot \cW_{T,v-1}(|\beta_T|).
\end{equation}
Moreover, a computation similar to that given in the proof of Lemma \ref{beta-vanishes} yields 
\begin{equation}
\label{eqn2}
dR(\overline{\beta_T})=-\sqrt{2}\pi\overline{\langle v_k,zT\cdot h\rangle}.
\end{equation}
Since $dR(|\beta_T|)=\frac{\beta_T}{2|\beta_T|}dR(\overline{\beta_T})$, (\ref{eqn1}) and (\ref{eqn2}) imply the claimed statement.
\end{proof}

\begin{proof}[Proof of Proposition~\ref{Proposition-a-large-portion}]
So far we have shown that if $m=(h,w,s)\in M$ satisfies $\beta_T(h,w,s)\neq 0$ then there exists a constant $Y_{T,0}(h,s)\in \CC$ such that 
$$\cW_T(m)=Y_{T,0}(h,s)\cdot \sum_{-\ell\leq v\leq \ell}w^{2\ell+2}\left(\frac{|\beta_T(m)|}{\beta_T(m)}\right)^vK_v(|\beta_T(m)|)[u_1^{\ell-v}][u_2^{\ell+v}].
$$
We first show that $Y_{T,0}(h,s)$ is constant in $h$ and $s$. The fact that $\cW_T(gk)=\cW_T(g)\cdot k$ implies that $f_{T,v}(h,w,s)=e^{isv}f_{T,v}(h,w,0)$. Moreover $\beta_T(h,w,s)=e^{-isv}\beta_T(h,w,0)$. It follows from (\ref{expression-for-ftv}) that $Y_{T,0}(h,s)=Y_{T,0}(h,0)$. To show that $Y_{T,0}(h,0)$ is constant in $h$, it suffices to show that $[u_2\otimes \overline{v_k}]^{\pm}Y_{T,0}(h,0)\equiv 0$ for all $1\leq k < n$. Since we are assuming $\cD_{\ell}^{\pm}\cW_T\rvert_M\equiv 0$, this follows from Proposition \ref{System-Scalar-DEs} and Claim~\ref{claim1}.

It remains to show that $\cW_T\equiv 0$ whenever $\langle T,T\rangle<0$. Thus suppose $\langle T,T\rangle <0$. We know that there exists a constant $Y_0\in \CC$ such that if $m\in M$ satisfies $\beta_T(m)\neq 0$ then $\cW_T(m)=Y_0\cdot \sum_{-\ell\leq v \leq \ell} \cW_{T,v}(m)[u_1^{\ell-v}][u_2^{\ell+v}]$ with $\cW_{T,v}$ as defined in (\ref{Wv-final-expression}). Since $\langle T,T\rangle<0$ there exists $h'\in \U(V_0)$ such that the element  $m'=(h',1)\in M$ satisfies $\beta_T(m')=0$. The set $\{m\in M\colon \beta_T(m)\neq 0\}$ is Zariski open in $M$ and so there exists a sequence $\{m_i\in M\colon i\in \ZZ_{\geq 1}\}$ such that $m_i\to m'$ as $i\to \infty$ and $\beta_T(m_i)\neq 0$ for all $i\geq 1$. By assumption $\cW_T(m)$ is continuous at $m=m'$ and so 
\begin{equation}\label{limiting-over-i}
\cW_T(m')=Y_0\sum_{-\ell\leq v \leq \ell}\lim_{i\to \infty}\left( \cW_{T,v}(m_i)\right) [u_1^{\ell-v}][u_2^{\ell+v}].
\end{equation}
Writing $m_i=(h_i,z_i)$, formula (\ref{Wv-final-expression}) implies
$$
\lim_{i\to \infty}|\cW_{T,v}(m_i)|=\lim_{i\to \infty} |z_i|^{2(\ell+1)}K_v(|\beta_T(m_i)|).
$$ 
Since $m_i\to m'$ we know that $z_i\to 1$ as $i\to \infty$. Moreover, $K_v(u)$ has a pole at $u=0$ and so $K_v(|\beta_T(m_i)|)\to \infty$ as $i\to \infty$. It follows that $\lim_{i\to \infty}|\cW_{T,v}(m_i)|$ does not exist. Therefore the equality (\ref{limiting-over-i}) is only possible if $Y_{0}=0$. 
\end{proof}

\subsubsection{Verifying the Schmid Equations}
\label{subsubsection-verifying-schmid-equations} Recall the candidate solution $\cW_T\colon G\to \VV_{\ell}$ defined by (\ref{Main-Formula-Mult-One}). From \S \ref{subsubsection-establishing-the-candidate-solution} we know either $\cC_{N,\chi_{T}}^{\mathrm{md}}(G, \VV_{\ell})_{K_{\infty}}^{\cD_{\ell}=0}$ is zero, or it is spanned by the function $\cW_T$. Using Proposition \ref{System-Scalar-DEs} and Claim \ref{claim1}, one may check that $\cD_{\ell}^{\pm} \cW_T\rvert_M\equiv 0$. To prove Theorem \ref{Theorem-Multiplicity-At-Most-1} it remains to prove the lemma below. 
\begin{lemma}
    Assume $\langle T,T\rangle \geq 0$. Then $\cW_T$ is a smooth function of moderate growth satisfying the conditions $\cD_{\ell}^{\pm}\cW_T\equiv 0$.
\end{lemma}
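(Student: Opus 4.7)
The plan is to establish smoothness, Schmid-vanishing, and moderate growth of $\cW_T$, with preliminary attention paid to the extension from $M$ to all of $G$. The formula \eqref{Main-Formula-Mult-One} defines $\cW_T$ only on $M$; I would first check that it is right $(M\cap K_\infty)$-equivariant by a direct computation using the explicit action of $(h_0, e^{is})\in M\cap K_\infty$ on $V_2^+$ (which rescales $u_1$ by $e^{-is}$ and $u_2$ by a unit character of $h_0$) and the corresponding transformation of $\beta_T$; the unit phases from $\beta_T$ match the action on the basis $\{[u_1^{\ell-v}][u_2^{\ell+v}]\}$. Once equivariance on $M\cap K_\infty$ is verified, I can extend uniquely via the Iwasawa-type decomposition $G=NMK_\infty$ by setting $\cW_T(nmk):=\chi_{T,\infty}(n)\,\cW_T(m)\cdot k$. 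This makes conditions (a), (b) of Definition \ref{definition-generalized-whittaker-coefficient} automatic.

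The heart of the smoothness check is to show that $\beta_T$ is nowhere vanishing on $M$ whenever $\langle T,T\rangle\geq 0$. Here I use the signature: $u_2\in V_0\cap V_2^+$ is a positive unit vector, so $u_2^\perp\subset V_0$ has signature $(0,n-1)$ and is negative definite. Since $\beta_T(h,z)=\frac{4\pi\bar z}{\sqrt 2}\langle u_2,T\cdot h\rangle$, its vanishing forces $T\cdot h\in u_2^\perp$. But $\langle T\cdot h,T\cdot h\rangle=\langle T,T\rangle\geq 0$ and the only non-positive-norm vector in a negative definite space is $0$; since $T\neq 0$ and $h$ is invertible, $T\cdot h\neq 0$, a contradiction. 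Hence $\beta_T$ is smooth and nonvanishing on $M$, the argument $|\beta_T|$ of every Bessel factor $K_v$ is strictly positive, and each ratio $(|\beta_T|/\beta_T)^v$ is smooth; combined with the smoothness of $|z|^{2\ell+2}$ on $\CC^\times$, this makes $\cW_T$ smooth on $M$, and the Iwasawa extension is then smooth on $G$.

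For the Schmid vanishing, Proposition \ref{System-Scalar-DEs} together with Claim \ref{claim1} already shows $\cD_\ell^{\pm}\cW_T|_M\equiv 0$ (the only point where those required $\beta_T\neq 0$, which now holds everywhere on $M$). The operators $\cD_\ell^{\pm}$ are built from the right regular action, hence commute with left translation: consequently $\cD_\ell^{\pm}\cW_T$ is again left $N$-equivariant under $\chi_{T,\infty}$. The projections $\pi^{\pm}$ are $K_\infty$-equivariant, so $\cD_\ell^{\pm}\cW_T$ is right $K_\infty$-equivariant. Vanishing on $M$ therefore propagates across $G=NMK_\infty$. Finally, for moderate growth, the Bessel factor $K_v(|\beta_T|)$ decays like $|\beta_T|^{-1/2}e^{-|\beta_T|}$ as $|\beta_T|\to\infty$, which dominates the polynomial $|z|^{2\ell+2}$ on non-compact directions of $M$; near the ``cusp'' $|z|\to 0$ one has $|\beta_T|\sim C|z|$ and $K_v(x)\lesssim x^{-|v|}+|\log x|$, which is absorbed by $|z|^{2\ell+2}$ since $|v|\leq \ell\leq 2\ell+2$. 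The $N$-direction contributes only the unitary phase $\chi_{T,\infty}$ and $K_\infty$ is compact, so the growth on $G$ is controlled by that on $M$. The main obstacle, and the step I would expect to merit the most care, is the nonvanishing of $\beta_T$: the case $\langle T,T\rangle=0$ requires the observation that a null vector forced into the negative-definite hyperplane $u_2^\perp$ must itself be zero.
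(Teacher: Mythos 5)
Your proposal is correct and follows essentially the same route as the paper: nonvanishing of $\beta_T$ on $M$ when $\langle T,T\rangle\geq 0$ gives smoothness, the small- and large-argument asymptotics of $K_v$ give moderate growth, and $\cD_{\ell}^{\pm}\cW_T\rvert_M\equiv 0$ (from Proposition \ref{System-Scalar-DEs} and Claim \ref{claim1}) propagates to all of $G=NMK_{\infty}$ by the left $N$-equivariance and the $K_{\infty}$-equivariance of $\cD_{\ell}^{\pm}$. The only difference is that you spell out two points the paper leaves implicit — the signature argument showing $\beta_T$ cannot vanish (a nonzero vector of non-negative norm cannot lie in the negative-definite $u_2^{\perp}\cap V_0$) and the well-definedness of the extension of \eqref{Main-Formula-Mult-One} from $M$ to $G$ — both of which are correct.
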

\begin{proof} Since $\langle T,T\rangle \geq 0$, we have $\beta_T(m)\neq 0$ for all $m\in M$. It follows that (\ref{Main-Formula-Mult-One}) defines a smooth function on $G$. According to \cite[8.446, pg. 919]{BigBookIntegrals}, the leading order term in the Laurent expansion of $K_v(u)$ as $u\to 0^+$ is $u^{-v}$  and so (\ref{Main-Formula-Mult-One}) is of moderate growth. It remains to show that $\cD_{\ell}^{\pm}\cW_T\equiv 0$. As we have already explained, $\cD_{\ell}^{\pm}\cW_T\rvert_M\equiv 0$. Moreover, if $n\in N$ and $m\in M$ then $\cD_{\ell}^{\pm}\cW_T(nm)=\chi_T(n)\cD_{\ell}^{\pm}\cW_T(m)=0$. By the $K_{\infty}$ equivariance of $\cD_{\ell}^{\pm}$ it follows that $\cD_{\ell}^{\pm}\cW_T\equiv 0$.
\end{proof}
\section{Generalities on the Theta Correspondence}
\label{section-generalities-theta-correspondence}
In this section, we review the theory of theta correspondence for the dual pair $(\U(2,n), \U(1,1))$. Subsection \ref{subsection-The-metaplectic-group} describes several preliminaries on Weil representations of the dual pair $\U(2,n)\times \U(1,1)$. In subsection \ref{subsec:Fourier-Coefficients-of-the-theta-lift} we calculate the Fourier coefficient of the theta lift of a general cusp form $f$ from $\U(1,1)$ to $\U(2,n)$. Subsection \ref{subsection-non-vanishing} recalls a classical argument, dating back to \cite{PS83}, which shows that the theta lift of a cusp form from $\U(1,1)$ to $\U(2,n)$ is non-zero. Finally, in subsection \ref{subsec:The-theta-lifts-of-poincare-series} we review the definition of Poincar\'e series, and present a formal computation describing the theta lift from Poincar\'e series on $\U(1,1)$ to $\U(2,n)$.

\subsection{The Metaplectic Group and Weil representation}
\label{subsection-The-metaplectic-group}
Let $(\bfW,\langle \, , \,\rangle_{\bfW}) $ be a split skew-hermitian space over $E$ of signature (1,1). So $\bfW$ is a $2$-dimensional $E$-vector space. We write $\{w_+, w_-\}$ for an isotropic basic of $\bfW$ satisfying $\langle w_+, w_-\rangle_{\bfW}=1$. 
Let $\bfH=\U(\bfW)$ be the $\QQ$-rational unitary group associated to $(\bfW, \langle\cdot, \cdot \rangle_{\bfW})$ with the convention that $\bfH$ acts on the right of $\bfW$. Consider the $\QQ$-vector space $\mathbb{W}=\bfV\otimes_E \bfW$ endowed with the symplectic form
\begin{equation*}
    \ll v\otimes w, v^\prime\otimes w^\prime \gg := \text{tr}_{E/\mathbb{Q}}( \langle v, v^\prime \rangle \overline{\langle w, w^\prime\rangle_\bfW}), \, \, \mathrm{ for } \,\, v, v^\prime\in \bfV, w, w^\prime\in \bfW.
\end{equation*}
Through its natural action on $\bfV\otimes_E\bfW$, the group $\bfG\times \bfH$ is embedded in $\Sp(\WW)$ as a dual pair. Write 
$\mathrm{Mp}(\WW)(\AA)\to \Sp(\WW)(\AA)$ for the metaplectic $\CC^{\times}$-extension associated to $\psi$ and let $\omega_{\psi}$ denote the corresponding Weil representation (see for example \cite[\S 8]{Prasad93}). The cover $\mathrm{Mp}(\WW)(\AA)\to \Sp(\WW)(\AA)$ splits over the image of $\bfG(\AA)\times \bfH(\AA)$ in $\Sp(\WW)(\AA)$. To normalize such a splitting, let $\chi\colon E^{\times}\backslash \AA_E^{\times}\to \CC^1$ denote a character such that $\chi\rvert_{\AA^{\times}}$ coincides with the character associated to the extension $E/\QQ$ under class field theory. It follows from \cite[Proposition 3.1.1]{GeRo91} that the pair $(\psi,\chi)$ determines a splitting $s_{\psi,\chi}\colon \bfG(\AA)\times \bfH(\AA)\to \mathrm{Mp}(\WW)(\AA)$. 
The composition $\omega_{\psi, \chi}=\omega_{\psi}\circ s_{\psi,\chi}$ defines a Weil representation of $\bfG(\AA)\times \bfH(\AA)$. 

\subsubsection{The Schr\"odinger model}
Let $\mathbb{X}=\mathbf{V}\otimes w_+$ and $\mathbb{Y}=\mathbf{V}\otimes w_{-}$ so that $\mathbb{W}=\XX\oplus \YY$ is a decomposition into Lagrangian subspaces. The Weil representation $\omega_{\psi, \chi}$ of $\bfG(\AA)\times \bfH(\AA)$ admits a model in the Schwartz space $\mathcal{S}(\mathbb{X}(\AA))$. We write elements in $\bfH(\AA)$ as matrices relative to the basis $\{w_+,w_-\}$. Given $\phi\in \mathcal{S}( \XX(\AA))$, $x\in \bfV(\AA)$, $a\in \AA_E^{\times}$, $b\in \AA$, and $g\in \bfG(\AA)$, we have \cite[\S 1]{Ichino04}
\begin{equation}
\label{eqn:WeilRepSch}
\begin{split}
    \omega_{\psi, \chi}\left(1,\left( \begin{smallmatrix}
    a & \\
    &   \overline{a}^{-1}\end{smallmatrix}\right)\right)\phi( x \otimes w_+ ) & =\chi(a)^{2+n}|a|_{\AA_E}^{\frac{2+n}{2}} \phi(x \otimes aw_+ ),  \\
     \omega_{\psi, \chi}\left(1,\left( \begin{smallmatrix}
    1 & b\\
    & 1\end{smallmatrix}\right)\right)\phi(x\otimes w_+) & = \psi( \langle x, x\rangle b  )\phi(x\otimes w_+ ),  \\
   \omega_{\psi, \chi}\left(1,\left( \begin{smallmatrix}
     & 1\\
    -1 & \end{smallmatrix}\right)\right)\phi(x\otimes w_+ ) &= \gamma(\psi,\bfV) \cF \phi(x\otimes w_+)= \gamma(\psi,\bfV) \int\limits_{\bfV(\AA)} \phi(y \otimes w_+)\psi_E(  \langle x, y\rangle ) dy,   \\
    \omega_{\psi, \chi}(g,1)\phi(x\otimes w_+ ) &=\phi( xg  \otimes w_+), \\
\end{split}
\end{equation}
Here, $\gamma(\psi,\bfV)$ is the Weil index \cite[n$^{\circ}$14, pg. 161]{Weil1964},  and $dy$ denotes the Haar measure on $\mathbf{V}(\AA)$ relative to which $\cF$ is self-dual. For an algebraic group $R$ over $\mathbb{Q}$, we write $[R]$ to denote the adelic quotient $R(\QQ)\backslash R(\AA)$.
For a Schwartz function $\phi\in \mathcal{S}( \XX(\AA))$, $g\in \bfG(\AA)$, and $h\in \bfH(\AA)$ we define a theta function $\theta(\cdot, \cdot ; \phi)\colon [\bfG]\times [\bfH]\to \CC$ by
\begin{equation}
\label{defn-of-theta-fcn}
    \theta(g, h;\phi) =\sum_{\xi \in \mathbb{X}(\mathbb{Q})}\omega_{\psi, \chi}(g, h)\phi(\xi),
\end{equation}
which is of moderate growth in each variable. 
For a cusp form $f$ on $\bfH(\AA)$, the theta-lift $\theta(f,\phi)\colon [\bfG]\to \CC$ is defined as
\begin{equation}
\label{eq-theta-lift}
    \theta(f, \phi)(g)= \int\limits_{\bfH(\mathbb{Q})\backslash \bfH(\AA)}  \theta(g, h;\phi) f(h)dh.
\end{equation}
Since $f$ is cuspidal, the above integral converges absolutely.

\subsubsection{The mixed model}

Recall that we have a decomposition $\bfV=\bfU\oplus \bfV_0\oplus \bfU^\vee$. Then the decomposition $\mathbb{W}=\mathbb{X}_{\bfU}\oplus \mathbb{Y}_U$, with
\begin{equation*}
    \mathbb{X}_{\bfU}= \bfV_0\otimes w_+ \oplus b_1\otimes \bfW, \,\,  \mathbb{Y}_U=b_2 \otimes \bfW \oplus \bfV_0\otimes w_-,
\end{equation*}
is a Lagrangian decomposition of $\mathbb{W}$.
We define a partial Fourier transform
\begin{equation*}
\begin{split}
\mathcal{F}_\bfU:
\mathcal{S}((\bfV \otimes w_+)(\AA)) &\to \mathcal{S}((b_1\otimes \bfW)(\AA)) \otimes \mathcal{S}((\bfV_0\otimes w_+)(\AA))
\end{split}
\end{equation*}
by
\begin{equation}
\label{eq-Fourier-transform}
    \mathcal{F}_\bfU (\phi) (b_1\otimes x w_+, b_1\otimes y w_-, v\otimes w_+) =  \int\limits_{\AA_E} \phi(b_1\otimes x w_+, v\otimes w_+, b_2\otimes w_+ z)\psi_E(\langle zw_+,yw_-\rangle )dz,
\end{equation}
where  $x\in \AA_E$, $y\in \AA_E$, and $v\in \bfV_0(\AA)$. Then $\mathcal{F}_\bfU$ defines an isomorphism between the above two spaces, and this gives rise to the mixed model for the Weil representation $\omega=\omega_{\psi, \chi, \mathbb{X}_{\bfU}}$ of $\bfG(\AA)\times \bfH(\AA)$ on $\mathcal{S}((\bfV_0\otimes \mathbb{X})(\AA)) \otimes \mathcal{S}((\bfU\otimes \bfW)(\AA))$. 

We now write down some explicit formulas for the action of $\omega_{\psi, \chi, \mathbb{X}_{\bfU}}$ in this model. 
Recall that $\mathbf{P}=\mathbf{M}\ltimes\mathbf{N}$ is the Heisenberg parabolic subgroup, with $\bfM=\U(\bfV_0)\times \Res_{E/\QQ}\GG_{m,E}$. 
Note that $\bfN$ is 2-step nilpotent. An element $n\in \bfN$ fixes $\bfU$ pointwise. If $\{h_1,\ldots, h_n\}$ is an ordered basis of $\bfV_0$, then with respect to the basis $\{b_1, h_1,\ldots, h_n, b_2\}$, the center $\bfZ=[\bfN,\bfN]$ is given by the matrices 
\begin{equation*}
     \bfZ=\left\{ n(z_0)= \begin{pmatrix}
    1 &  & \\
    0 & I_n & \\
    z_0 &0 & 1
    \end{pmatrix}\in \bfN\colon \hbox{$z_0\in E$ such that $\overline{z_0}=-z_0$}\right\}.
\end{equation*}
Denote
\begin{equation*}
N_0= \left\{ n(x_0)=  \begin{pmatrix}
    1 &  &  \\
    \ast & I_n & \\
    -\frac{1}{2}\langle x_0, x_0\rangle& x_0& 1
    \end{pmatrix} \in \bfN\colon x_0\in \bfV_0\right\}.
\end{equation*}
In the notation of \eqref{n_0-abelianised}, $n(x_0)=\exp(b_1\otimes \overline{x_0}-x_0\otimes \overline{b_1})$. Note that $\bfN= \bfZ N_0$.
\\
\indent Given a vector $v_0\in \bfV_0(\AA)$, we define a character $\eta_{v_0}$ on $N_0(\AA)$ by
\begin{equation}
\label{defn-of-eta}
    \eta_{v_0}(n(x))=\psi (\frac{1}{2}\tr_{E/\mathbb{Q}} (\langle {-v_0},x\rangle ) ), \qquad x\in \mathbf{V}_0(\AA). 
\end{equation}
Write $\eta_{v_0,\infty}$ for the archimedean component of $\eta_{v_0}$. A short computation reveals that the character $\eta_{v_0,\infty}$ is related to the character $\chi_{T,\infty}$ of \eqref{defn-chiv0} by 
\begin{equation}
\label{relation-between-eta-chi}
\eta_{v_0,\infty}(n(x))=\chi_{-iv_0,\infty}(n(x))
\end{equation}
where $x\in V_0$. 
We have the following formulas.

\begin{lemma}
\label{lemma-mixed-model-formula}
Let $x\in \AA_E$, $y\in \AA_E$, $v\in \bfV_0(\AA)$, $\phi^\prime\in \mathcal{S}((b_1\otimes \bfW)(\AA)) \otimes \mathcal{S}((\bfV_0\otimes w_+)(\AA))$.
We have
\begin{equation}
\label{eq-mixed-model-formula1} 
\begin{split}
       \omega_{\psi, \chi, \mathbb{X}_{\bfU}} \left(\begin{pmatrix}
     a & \\ &  \overline{a}^{-1}
     \end{pmatrix}\right)  & \phi^\prime(b_1\otimes x w_+, b_1\otimes y w_-, v\otimes w_+) \\
    =&    \chi(a)^{2+n}|a|_{\mathbb{A}_E}^{\frac{n}{2}} \phi^\prime(b_1\otimes ax w_+ , b_1\otimes \overline{a}^{-1}y w_- , av\otimes w_+),
\end{split}
\end{equation}
and
\begin{equation}
\label{eq-mixed-model-formula2} 
\begin{split}
     \omega_{\psi, \chi, \mathbb{X}_{\bfU}} \left(\begin{pmatrix}
     1 & b \\ &  1
     \end{pmatrix}\right) & \phi^\prime(b_1\otimes x w_+, b_1\otimes y w_-, v\otimes w_+) \\
     =& \psi_E( \langle v, v\rangle b) \phi^\prime(b_1\otimes x w_+, b_1\otimes (y+xb)w_- , v\otimes w_+).
\end{split}
\end{equation}
For $n(z_0), n(x_0)\in \bfN(\AA)$, we have
\begin{equation}
\begin{split}
\label{eq-mixed-model-formula4}
    \omega_{\psi, \chi, \mathbb{X}_{\bfU}}(n(z_0)) & \phi^\prime(b_1\otimes x w_+, b_1\otimes y w_-, v\otimes w_+)\\
    =& \psi_E( -z_0 x \overline{y} )\phi^\prime(b_1\otimes x w_+, b_1\otimes y w_-, v\otimes w_+)
\end{split}
\end{equation}
and
\begin{equation}
\label{eq-mixed-model-formula5}
\begin{split}
    \omega_{\psi, \chi, \mathbb{X}_{\bfU}}(n(x_0)) & \phi^\prime(b_1\otimes x w_+, b_1\otimes y w_-, v\otimes w_+) \\
  =&\psi_E(-(x \cdot (-\frac{1}{2}x_0 {}^t\overline{x_0})+\langle v, x_0\rangle) \overline{y}) \phi^\prime(b_1\otimes x w_+, b_1\otimes y w_-, (v+x x_0)\otimes w_+).
\end{split}
\end{equation}
\end{lemma}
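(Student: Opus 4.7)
The plan is to derive each of the four formulas by transporting the Schr\"odinger-model formulas \eqref{eqn:WeilRepSch} through the partial Fourier transform $\mathcal{F}_\bfU$ defined in \eqref{eq-Fourier-transform}. Since the mixed model $\omega_{\psi,\chi,\mathbb{X}_\bfU}$ is by construction the image of the Schr\"odinger model under $\mathcal{F}_\bfU$, each mixed-model formula is obtained by computing $\mathcal{F}_\bfU \circ \omega_{\psi,\chi}(g) \circ \mathcal{F}_\bfU^{-1}$ applied to $\phi^\prime = \mathcal{F}_\bfU \phi$.

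For the two elements of $\bfH$, I would work directly from \eqref{eqn:WeilRepSch}. For the torus element $\mathrm{diag}(a, \overline{a}^{-1})$, the Schr\"odinger action rescales the argument of $\phi$ by $a$; applying $\mathcal{F}_\bfU$ and changing variables $z \mapsto a^{-1}z$ in the inner integral (with Jacobian $|a|_{\AA_E}^{-1}$) produces \eqref{eq-mixed-model-formula1}, with the exponent of $|a|_{\AA_E}$ reduced from $\tfrac{n+2}{2}$ to $\tfrac{n}{2}$. For the upper unipotent element, using the orthogonal decomposition $x = x_1 b_1 + v + x_2 b_2 \in \bfV$ gives $\langle x, x\rangle = \tr_{E/\QQ}(x_1\overline{x_2}) + \langle v, v\rangle$, so the Schr\"odinger character $\psi(\langle x, x\rangle b)$ factors as $\psi_E(\langle v, v\rangle b)$ (which pulls outside the $z$-integral) times a piece depending on $x_1\overline{x_2}$; after applying $\mathcal{F}_\bfU$ this piece combines with the Fourier kernel $\psi_E(z\overline{y})$ to produce the shift $y \mapsto y + xb$ recorded in \eqref{eq-mixed-model-formula2}.

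For $n(z_0) \in \bfZ$ and $n(x_0) \in \bfN$, the Schr\"odinger action is the pullback by the action of the matrix on $\bfV$. Explicit matrix calculation in the basis $\{b_1, h_1, \dots, h_n, b_2\}$ determines these actions on each basis vector; in particular one finds $b_2 \mapsto b_2 + z_0 b_1$ (with $b_1$ and each $h_i$ fixed) for $n(z_0)$, and the analogous images of $b_1, h_i, b_2$ under $n(x_0)$ (whose off-diagonal entries are pinned down by the unitarity constraint). Inserting these into the Schr\"odinger action and applying $\mathcal{F}_\bfU$, the resulting inner integral can be recast, via a translation inside the exponential in the integration variable, as the original Fourier-transformed function multiplied by the character $\psi_E(-z_0 x\overline{y})$, giving \eqref{eq-mixed-model-formula4}. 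The same method for $n(x_0)$ produces the character multiplication together with the translation $v \mapsto v + xx_0$ of \eqref{eq-mixed-model-formula5}.

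The main obstacle is careful bookkeeping: one must track complex conjugation on $\AA_E$, the normalization $\psi_E(t) = \psi_\QQ(\tfrac{1}{2}\tr_{E/\QQ}(t))$ relating the characters on $\AA$ and $\AA_E$, the $E$-bilinearity of the tensor $\bfV \otimes_E \bfW$ (which allows scalars to be moved across the tensor), and the sign conventions for the actions of $\bfG$ and $\bfH$ on $\bfV$ and $\bfW$ respectively. The individual computations are routine but require consistent conventions throughout.
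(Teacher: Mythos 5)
Your proposal is correct and takes essentially the same route as the paper: the paper's entire proof is the remark that the lemma ``can be checked by assuming $\phi^\prime=\mathcal{F}_{\bfU}(\phi)$'' together with citations to Rallis, Gan--Ichino and Pollack for similar computations, i.e.\ precisely the transport of the Schr\"odinger-model formulas \eqref{eqn:WeilRepSch} through the partial Fourier transform that you describe. Your sketch in fact supplies more of the omitted bookkeeping (the change of variables $z\mapsto a^{-1}z$ for the torus element, the splitting of $\langle x,x\rangle$ for the unipotent of $\bfH$, and the translation-in-the-integration-variable step yielding the characters for $n(z_0)$ and $n(x_0)$) than the paper itself records.
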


\begin{proof}
This can be checked by assuming $\phi^\prime=\mathcal{F}_\bfU(\phi)$ for $\phi\in \mathcal{S}((\bfV\otimes w_+)(\AA))$. See, for example, \cite[pp. 340-341]{Rallis1984}, \cite[Section 7.4]{GanIchino2016} and \cite[Proposition 2.1.1]{Pollack21b}, for similar statements. We omit the details.
\end{proof}

Given a Schwartz function $\phi\in \mathcal{S}( \XX(\AA))$, recall the theta function $\theta(\cdot, \cdot ; \phi)\colon [\bfG]\times [\bfH]\to \CC$ defined in \eqref{defn-of-theta-fcn}. Similarly, given a Schwartz function $\phi^\prime\in \mathcal{S}((b_1\otimes \bfW)(\AA)) \otimes \mathcal{S}((\bfV_0\otimes w_+)(\AA))$, one can define 
\begin{equation*}
    \theta(g, h;\phi^\prime) =\sum_{\xi \in \mathbb{X}_{\bfU}(\mathbb{Q})} \omega_{\psi, \chi, \mathbb{X}_{\bfU}}(g, h)\phi^\prime(\xi).
\end{equation*}
It is known that if $\phi^\prime=\mathcal{F}_\bfU(\phi)$, then 
\begin{equation*}
     \theta(g, h;\phi)= \theta(g, h;\phi^\prime).
\end{equation*}

In the rest of the paper, we will simply write $\omega$ to denote the Weil representation. The reader may determine which model of $\omega$ we are using based on the domain of the Schwartz data $\phi$. 

\subsection{Fourier Coefficients of the Theta Lift}
\label{subsec:Fourier-Coefficients-of-the-theta-lift}
 Let $\bfN_\mathbb{Y}=\left\{\begin{pmatrix} 1 & x\\ & 1\end{pmatrix}\in \bfH \right\}$ and $\bfM_\mathbb{Y}=\left\{\begin{pmatrix} a & \\ & \overline{a}^{-1}\end{pmatrix}\in \bfH \right\}$. 
Suppose $t\in \mathbb{Q}$ and let $\chi_t:[\bfN_\mathbb{Y}]\to \bbC^\times$ be defined by
\begin{equation*}
    \chi_{t} \begin{pmatrix} 1 & x\\ & 1\end{pmatrix} = \psi(tx).
\end{equation*}
For an automorphic form $\varphi$ on $\bfH(\AA)$, the $\chi_t$-Fourier coefficient of $f$ is
\begin{equation*}
    f_t(g)=f_{\chi_{t}}(g)=\int\limits_{[\bfN_\mathbb{Y}]}f(ng)\chi_t^{-1}(n)dn.
\end{equation*}
Given a character $\chi:[\bfN]\to \bbC^\times$ and a Schwartz function $\phi\in \cS(\XX(\AA))$ we define
\begin{equation*}
    \theta_\chi(g, h;\phi)=\int\limits_{[\bfN]} \theta(ng, h;\phi)\chi^{-1}(n)dn.
\end{equation*}
Let $f\colon [\bfH]\to \CC$ be a cusp form and recall the theta-lift $\theta(f;\phi)$ defined in \eqref{eq-theta-lift}. Then the $\chi$-Fourier coefficient of the theta lift $\theta(f; \phi)$ along $\bfN$ is given by $\theta(f, \phi)_\chi(g)= \int\limits_{[\bfH]} \theta_\chi(g, h;\phi) f(h)dh$. \\
\indent
In the case when $\langle v_0,v_0\rangle \neq 0$, the next proposition expresses the Fourier coefficient $\theta(f, \phi)_{\eta_{v_0}}$ as an integral transform of the Fourier coefficient $f_{-\langle v_0, v_0\rangle}$.

\begin{prop}
\label{prop-Fourier-coefficient-of-theta-lift}
Suppose $\theta(f, \phi)$ is the theta function
on $\bfG(\AA)$ associated to a cuspidal automorphic function $f$ on $\bfH(\AA)$ and a Schwartz function $\phi\in \mathcal{S}(\mathbb{X}_{\bfU}(\AA))$. Suppose $v_0$ is in $\bfV_0(\mathbb{Q})$ such that $\langle {v_0}, {v_0}\rangle\not=0$. Set $z_{v_0}=   b_1\otimes w_- +v_0\otimes w_+.$
Then
\begin{equation}
\label{eq-Fourier-coeff-of-theta-lift}
\theta(f, \phi)_{\eta_{v_0}}(g)=\int\limits_{\bfN_\mathbb{Y}(\AA)\backslash \bfH(\AA)} \omega(g, h)\phi(z_{v_0}) f_{-\langle v_0, v_0\rangle} (h) dh.
\end{equation}
\end{prop}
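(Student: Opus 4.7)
The plan is to unfold in the mixed model. By absolute convergence (cuspidality of $f$ against the theta kernel's decay), swap the orders of integration and compute
\begin{equation*}
\theta(f,\phi)_{\eta_{v_0}}(g) = \int_{[\bfH]} f(h)\, \cI(g, h)\, dh, \qquad \cI(g, h) := \int_{[\bfN]} \theta(ng, h; \phi)\, \eta_{v_0}^{-1}(n)\, dn.
\end{equation*}
Expanding $\theta(ng, h; \phi) = \sum_{(x, y, v)\in E^2 \times \bfV_0(\QQ)} \omega(ng, h)\phi(\xi_{x, y, v})$ with $\xi_{x, y, v} = (b_1\otimes xw_+, b_1\otimes yw_-, v\otimes w_+)$ and interchanging with the $[\bfN]$-integration (which factors as $[\bfN^{\mathrm{ab}}] \times [\bfZ]$), the $[\bfZ]$-integration via \eqref{eq-mixed-model-formula4} yields $\int_{[\bfZ]} \psi_E(-z_0 x\bar y)\, dz_0$ (since $\eta_{v_0}$ is trivial on $\bfZ$), forcing $x\bar y = 0$. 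The surviving terms split into case (A), $x = 0$, and case (B), $y = 0$ with $x \ne 0$.

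In case (A), \eqref{eq-mixed-model-formula5} specializes to multiplication by $\psi_E(-\bar y\langle v, x_0\rangle)$, and integration against $\eta_{v_0}^{-1}(n(x_0)) = \psi_E(\langle v_0, x_0\rangle)$ over $[\bfN^{\mathrm{ab}}]$ produces $\int_{[\bfN^{\mathrm{ab}}]}\psi_E(\langle v_0 - \bar y v, x_0\rangle)\, dx_0 = \delta_{v_0 = \bar y v}$ by non-degeneracy of the pairing $(w, x_0) \mapsto \tr_{E/\QQ}\langle w, x_0\rangle$ on $\bfV_0$. Since $v_0 \ne 0$, this forces $y \in E^\times$ and $v = v_0/\bar y$, so case (A) contributes $\sum_{y \in E^\times} \omega(g, h)\phi(0, y, v_0/\bar y)$ to $\cI(g, h)$. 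In case (B), the vectors $\xi_{x, 0, v}$ correspond via $\mathcal{F}_\bfU^{-1}$ to $(xb_1 + v)\otimes w_+ \in \XX$, on which $\bfN_\YY \subset \bfH$ acts by the character $\chi_{\langle v, v\rangle}$ through \eqref{eqn:WeilRepSch} (using isotropy of $b_1$ and $\bfU \perp \bfV_0$). After completing the $[\bfN^{\mathrm{ab}}]$-integration in case (B) and pairing with $\int_{[\bfH]} f(h)\cdots dh$, each fixed-character piece pulls out an integral of $f$ against $\bfN_\YY$, and the resulting $[\bfN_\YY]$-periods all vanish by cuspidality of $f$.

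To finish, I use \eqref{eq-mixed-model-formula1} to recognize $\sum_{y \in E^\times} \omega(g, h)\phi(0, y, v_0/\bar y)$ as a sum over the $\bfM_\YY(\QQ)$-orbit of $z_{v_0}$ (via $m_a := \left(\begin{smallmatrix} a & \\ & \bar a^{-1}\end{smallmatrix}\right)$ with $a = \bar y^{-1}$), with scalar factors $\chi(\bar y^{-1})^{-(2+n)}|\bar y^{-1}|_{\AA_E}^{-n/2}$. Combined with the $\bfN_\YY(\AA)$-equivariance
\begin{equation*}
[\omega(g, n(b)h)\phi](z_{v_0}) = \chi_{\langle v_0, v_0\rangle}(n(b))\,[\omega(g, h)\phi](z_{v_0})
\end{equation*}
derived from \eqref{eq-mixed-model-formula2} (using $\langle v_0, v_0\rangle \in \QQ$ so that $\psi_E(\langle v_0, v_0\rangle b) = \psi(\langle v_0, v_0\rangle b) = \chi_{\langle v_0, v_0\rangle}(n(b))$), a Fourier expansion of $f$ along $[\bfN_\YY]$ together with cuspidality isolates the $\chi_{-\langle v_0, v_0\rangle}$-isotypic component, extracts the Fourier coefficient $f_{-\langle v_0, v_0\rangle}(h)$, and collapses the remaining integration onto $\bfN_\YY(\AA)\backslash \bfH(\AA)$, yielding \eqref{eq-Fourier-coeff-of-theta-lift}. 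The main obstacle will be verifying case (B) vanishes cleanly: organizing the $[\bfN^{\mathrm{ab}}]$-integration (a partial Fourier transform in the $\bfV_0$-direction) together with the sum over $x \in E^\times$ and $v \in \bfV_0(\QQ)$ into a genuine $[\bfN_\YY]$-period of $f$ requires careful bookkeeping with Schwartz decay.
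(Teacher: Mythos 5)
There is a genuine error at the very first unfolding step, and it propagates through the whole argument. The $[\bfZ]$-integration does \emph{not} force $x\bar y=0$. Since $z_0$ ranges over trace-zero elements, $\psi_E(-z_0x\bar y)=\psi\bigl(\tfrac12\tr_{E/\QQ}(-z_0x\bar y)\bigr)$ only detects the trace-zero part of $x\bar y$: if $x\bar y\in\QQ^{\times}$ then $-z_0x\bar y$ is trace-zero and the character is identically $1$. So the surviving condition is $x\bar y\in\QQ$, i.e.\ that $w=xw_++yw_-$ is isotropic in the skew-hermitian space $\bfW$ (this is exactly what appears in \eqref{eq-theta_Z-1} of the paper). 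Your case decomposition therefore omits all isotropic vectors with both coordinates nonzero, which make up almost all of the single $\bfH(\QQ)$-orbit of $w_-$ (the big Bruhat cell); your case (A) is only the $\bfM_\YY(\QQ)$-piece and your case (B) only the $b=0$ slice of the big cell.

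This omission is fatal to the endgame. The correct route (the paper's) identifies the nonzero isotropic vectors with $\bfN_\YY(\QQ)\backslash\bfH(\QQ)$, unfolds the $[\bfH]$-integral over that full coset space, lets the $[\bfN/\bfZ]$-integration force $v=v_0$, and then unfolds $[\bfN_\YY]$ to produce the domain $\bfN_\YY(\AA)\backslash\bfH(\AA)$ together with the Fourier coefficient $f_{-\langle v_0,v_0\rangle}$. Starting from your case (A) alone, the $[\bfH]$-integration only unfolds over $\bfM_\YY(\QQ)$ and yields $\int_{\bfM_\YY(\QQ)\backslash\bfH(\AA)}\omega(g,h)\phi(z_{v_0})f(h)\,dh$, which is not the asserted right-hand side, and no Fourier expansion of $f$ along $[\bfN_\YY]$ can convert it into one: the $[\bfN_\YY]$-unfolding is carried precisely by the big-cell terms you discarded. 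Relatedly, the plan to kill case (B) by cuspidality is both unjustified and beside the point — the $x\neq 0$ terms (case (B) together with the terms you lost) are essential contributions, accounting exactly for the difference between unfolding over $\bfM_\YY(\QQ)$ and over $\bfN_\YY(\QQ)\backslash\bfH(\QQ)$; in the paper they are never shown to vanish but are absorbed into the orbit sum. Your case (A) computation itself (the $\delta_{v_0=\bar y v}$ evaluation via non-degeneracy of $\tr_{E/\QQ}\langle\cdot,\cdot\rangle$ on $\bfV_0$) is fine, but to repair the proof you must replace the constraint ``$x\bar y=0$'' by isotropy of $w$ and then organize the nonzero isotropic vectors as the single orbit $w_-\cdot\bfN_\YY(\QQ)\backslash\bfH(\QQ)$ before integrating against $f$.
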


\begin{proof}

Without loss of generality we may assume $g=1$ and $\phi = \phi_\bfU \otimes \phi_{\bfV_0}$ with $\phi_\bfU \in  \mathcal{S}((b_1\otimes \bfW)(\AA))$ and $\phi_{\bfV_0}\in \mathcal{S}((\bfV_0\otimes w_+)(\AA))$. Taking the constant term of $\theta(1,h;\phi)$ along $\bfZ\subset \bfN$, we see that 
\begin{equation*}
\begin{split}
    \theta_\bfZ(1, h;\phi) &= \int\limits_{\bfZ(\mathbb{Q})\backslash \bfZ(\AA)} \theta(z, h;\phi)dz   = \int\limits_{\bfZ(\mathbb{Q})\backslash \bfZ(\AA)}  \sum_{\xi\in \mathbb{X}_{\bfU}(\mathbb{Q})} \omega(z, h)\phi(\xi) dz\\
    &= \int\limits_{\bfZ(\mathbb{Q})\backslash \bfZ(\AA)}  \sum\limits_{\substack{v\in \bfV_0(\mathbb{Q})\\ w=x w_++ y w_- \in \bfW(\mathbb{Q})}} \psi_E(-z x \overline{y})  \omega(1, h)\phi(v\otimes w_+ +b_1\otimes w) dz
\end{split}
\end{equation*}
where in the last equality we have used \eqref{eq-mixed-model-formula4}. Interchanging the order of summation and the integration we obtain
\begin{equation}
\begin{split}
 \theta_\bfZ(1, h;\phi) &=  \sum\limits_{\substack{v\in \bfV_0(\mathbb{Q})\\ w\in \bfW(\mathbb{Q}): \langle w, w\rangle_\bfW=0}}   \omega(1, h)\phi(v\otimes w_+ +b_1\otimes w) \\
 &= \theta_{\bfV_0}(1, h;\phi_{\bfV_0}) \left(\sum_{w\in \bfW(\mathbb{Q}): \langle w, w \rangle_\bfW=0} \omega(1, h)\phi_\bfU(b_1 \otimes w) \right)
 \label{eq-theta_Z-1}
\end{split}
\end{equation}
where $\theta_{\bfV_0}(1, h;\phi_{\bfV_0}):=\sum_{v\in \bfV_0(\mathbb{Q})} \left(\omega(h)  \phi_{\bfV_0}\right) (v \otimes w_+)$.
The non-zero isotropic vectors in $\bfW(\mathbb{Q})$ lie in a single $\bfH(\QQ)$ orbit. In fact
\begin{equation*}
    \{w\in \bfW(\mathbb{Q}): \langle w, w\rangle_\bfW=0 \} = \{0\}\cup w_- \cdot \bfN_\mathbb{Y}(\mathbb{Q})\backslash \bfH(\mathbb{Q}).
\end{equation*}
In the mixed model of $\omega$, the actions of $\bfN_\mathbb{Y}(\mathbb{Q})$ and $\bfM_\mathbb{Y}(\mathbb{Q})$ are given in Lemma~\ref{lemma-mixed-model-formula}. Moreover, a similar computation yields 
\begin{equation*}
     \omega \left(1, \begin{pmatrix}
     &1\\ -1
     \end{pmatrix}\right)  \phi_\bfU(b_1\otimes w) 
    =    \phi_\bfU(b_1\otimes w \begin{pmatrix}
     &1\\ -1
     \end{pmatrix} )
\end{equation*}
for any $w\in  \bfW(\AA)$. We conclude that for any $h_0\in \bfH(\QQ)$, $\omega(1,h_0)  \phi_\bfU(b_1\otimes w_-)=\phi_\bfU(b_1\otimes w_- h_0)$.
Therefore,  
\eqref{eq-theta_Z-1} can be rewritten as
\begin{equation*}
 \theta_{\bfV_0}(1, h;\phi_{\bfV_0}) \left(\omega(1,  h)\phi_\bfU(0) +  \sum\limits_{h_0\in \bfN_\mathbb{Y}(\mathbb{Q})\backslash \bfH(\mathbb{Q})} \omega(1,  h_0 h)\phi_\bfU(b_1\otimes w_-)\right).
\end{equation*}

Hence the $\eta_{v_0}$-Fourier coefficient $\theta_{\eta_{v_0}}(1, h;\phi)$ is given by
\begin{equation}
\label{eq-theta_N-1}
\begin{split}
\int\limits_{[N_0]}  &\theta_{\bfV_0}(n(x_0), h;\phi_{\bfV_0}) \omega(n(x_0),  h)\phi_\bfU(0)\eta_{-v_0}(n(x_0)) dx_0 \\
        +&   \int\limits_{[N_0]}  \theta_{\bfV_0}(n(x_0), h;\phi_{\bfV_0}) \sum\limits_{h_0\in \bfN_\mathbb{Y}(\mathbb{Q})\backslash \bfH(\mathbb{Q})} \omega(n(x_0),  h_0 h)\phi_\bfU(b_1\otimes w_-)\eta_{-v_0}(n(x_0)) dx_0.
\end{split}
\end{equation}
By \eqref{eq-mixed-model-formula5}, the product $\theta_{\bfV_0}(n(x_0), h;\phi_{\bfV_0}) \omega(n(x_0),  h)\phi_\bfU(0)$ is independent of $x_0$. Furthermore, since $v_0\neq 0$, the integral $\int_{[N_0]}\eta_{-v_0}(n(x_0))dx_0=0$. Hence, the first term in \eqref{eq-theta_N-1} is equal to $0$ and

\begin{align}
     \theta(f,&\phi)_{\eta_{v_0}}(1) =\int\limits_{[\bfH]} \theta_{\eta_{v_0}}(1, h;\phi) f(h)dh \nonumber\\
    =& \int\limits_{\bfN_\mathbb{Y}(\mathbb{Q})\backslash \bfH(\AA)}  \int\limits_{[N_0]} \theta_{\bfV_0}(n(x_0), h;\phi_{\bfV_0}) \omega(n(x_0),   h)\phi_\bfU(b_1\otimes w_-)\eta_{-v_0}(n(x_0)) dx_0 f(h)dh. \label{eq-theta_N-2}
\end{align}
Another application \eqref{eq-mixed-model-formula5} then yields an expression for $\theta(f, \phi)_{\eta_{v_0}}(1)$ of the form
\begin{equation}
\label{eqn-proof-FC-formula}
\int\limits_{\bfN_\mathbb{Y}(\mathbb{Q})\backslash \bfH(\AA)} 
      \sum_{v\in \bfV_0(\mathbb{Q})}\omega(1,h)(\phi_{\bfV_0}\otimes \phi_{\bfU})(v_0\otimes w_++b_1\otimes w_-)f(h) \int\limits_{[N_0]} \psi_E(-\langle v-v_0, x_0\rangle ) dx_0 dh.
\end{equation}
Since the pairing $\langle \, ,\,\rangle$ is non-degenerate on $\bfV_0$, \eqref{eqn-proof-FC-formula} simplifies to
\begin{equation*}
\begin{split}
   \theta(f, \phi)_{\eta_{v_0}}(1) =\int\limits_{\bfN_\mathbb{Y}(\mathbb{Q})\backslash \bfH(\AA)} \omega_{} (1, h)\phi(b_1\otimes w_- + v_0 \otimes w_+  )f(h)dh.
\end{split}    
\end{equation*}
Now we use \eqref{eq-mixed-model-formula2} to get
\begin{equation*}
\begin{split}
      &\theta(f, \phi)_{\eta_{v_0}}(1) \\
    =&     \int\limits_{\bfN_\mathbb{Y}(\AA)\backslash   
           \bfH(\AA)}\int\limits_{\bfN_\mathbb{Y}(\mathbb{Q})\backslash \bfN_\mathbb{Y}(\AA)} \omega_{} \left(1, \begin{pmatrix} 1 & x\\ & 1\end{pmatrix} h \right)\phi(b_1\otimes w_- + v_0 \otimes w_+ )f\left( \begin{pmatrix} 1 & x\\ & 1\end{pmatrix} h \right) dx dh \\
=&       \int\limits_{\bfN_\mathbb{Y}(\AA)\backslash   
           \bfH(\AA)}\omega_{} \left(1,  h \right)\phi(b_1\otimes w_- + v_0 \otimes w_+ )\int\limits_{\bfN_\mathbb{Y}(\mathbb{Q})\backslash \bfN_\mathbb{Y}(\AA)} f\left( \begin{pmatrix} 1 & x\\ & 1\end{pmatrix} h \right)\psi(\langle v_0, v_0\rangle x) dx dh\\
=&       \int\limits_{\bfN_\mathbb{Y}(\AA)\backslash   
           \bfH(\AA)}\omega_{} \left(1,  h \right)\phi(b_1\otimes w_- + v_0 \otimes w_+ )\int\limits_{\bfN_\mathbb{Y}(\mathbb{Q})\backslash \bfN_\mathbb{Y}(\AA)} f\left( \begin{pmatrix} 1 & x\\ & 1\end{pmatrix} h \right)\chi_{\langle v_0, v_0\rangle}(x) dx dh.
\end{split}
\end{equation*}
The inner integration in the above formula is equal to $f_{-\langle v_0, v_0\rangle}(h)$ as required.
\end{proof}
\subsection{Nonvanishing}
\label{subsection-non-vanishing}
In this section we sketch the proof that the theta lift of a nonzero cusp form on $\mathbf{H}(\mathbb{A})$ is nonzero. The argument closely follows an argument of Piatetski-Shapiro \cite{PS83}.

\begin{prop}
\label{prop-nonvanishing-theta-lift}
Let $\tau$ be a non-zero irreducible cuspidal automorphic representation of $\mathbf{H}(\mathbb{A})$. The theta lift $\theta(\tau, \psi)$ of $\tau$ to $\mathbf{G}(\mathbb{A})$ is non-zero.
\end{prop}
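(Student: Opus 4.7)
The plan is to follow the classical Piatetski-Shapiro argument: assume the theta lift vanishes, then use the Fourier-coefficient formula of Proposition~\ref{prop-Fourier-coefficient-of-theta-lift} to deduce that a non-trivial Fourier coefficient of a cusp form in $\tau$ integrates to zero against a rich enough family of test functions, and derive a contradiction.

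Concretely, suppose toward contradiction that $\theta(f,\phi) \equiv 0$ on $[\bfG]$ for every $f \in \tau$ and every Schwartz function $\phi \in \cS(\XX(\AA))$. Then every Fourier coefficient of $\theta(f,\phi)$ along $\bfN$ vanishes, so Proposition~\ref{prop-Fourier-coefficient-of-theta-lift} gives, for every $v_0 \in \bfV_0(\QQ)$ with $\langle v_0,v_0\rangle \neq 0$,
\begin{equation}\label{nonvan-int}
\int_{\bfN_{\mathbb{Y}}(\AA)\backslash \bfH(\AA)} \omega(g,h)\phi(z_{v_0})\,f_{-\langle v_0,v_0\rangle}(h)\, dh \;=\; 0
\end{equation}
for all $g \in \bfG(\AA)$ and all $\phi$. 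Next, since $\tau$ is cuspidal on $\U(1,1)$ and hence globally generic, there exist $f \in \tau$ and $t_0 \in \QQ^\times$ such that $f_{t_0} \not\equiv 0$ on $\bfH(\AA)$. Because $\bfV_0$ has signature $(1,n-1)$, the Hasse--Minkowski theorem ensures that $\bfV_0$ represents $-t_0$ over $\QQ$ (for $n \geq 2$ it represents every non-zero rational; for $n=1$ one instead chooses $t_0$ of the appropriate sign, which is possible from the Fourier expansion of a cusp form on $\U(1,1)$). Fix such a $v_0$.

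The remaining step is to choose $\phi$ so that \eqref{nonvan-int} is non-zero, contradicting the assumption. Writing $\phi = \phi_\infty \otimes \phi_{\mathrm{fin}}$ and exploiting the explicit action of $\bfH$ on the mixed model given in Lemma~\ref{lemma-mixed-model-formula}, the function $h \mapsto \omega(1,h)\phi(z_{v_0})$ transforms under $\bfN_\mathbb{Y}(\AA)$ by $\psi(-t_0 \cdot)$ (matching the transformation law of $f_{t_0}$), so the integrand descends to $\bfN_\mathbb{Y}(\AA)\backslash\bfH(\AA)$. Since $f_{t_0}$ is smooth and non-vanishing, pick $h_0$ with $f_{t_0}(h_0)\neq 0$; using the freedom to shrink the support of $\phi_{\mathrm{fin}}$ at each finite place and to insert a suitable archimedean bump, one produces $\phi$ for which $h \mapsto \omega(1,h)\phi(z_{v_0})$ is concentrated in an arbitrarily small neighborhood of $h_0$ modulo $\bfN_\mathbb{Y}$. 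The integral \eqref{nonvan-int} is then non-zero, contradicting the hypothesis and completing the proof.

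The main obstacle is the last step: making precise the claim that the map $\phi \mapsto (h \mapsto \omega(1,h)\phi(z_{v_0}))$ has sufficiently rich image inside $\cC^\infty(\bfN_\mathbb{Y}(\AA)\backslash\bfH(\AA), \psi(-t_0\cdot))$ to separate the non-zero smooth function $f_{t_0}$. This is a local question at each place, handled by unwinding the Schr\"odinger/mixed model formulas and using the transitivity of the $\bfH(\QQ_v)$-action on a suitable orbit together with density of Schwartz functions; at finite places this is the familiar observation that characteristic functions of small open compact subsets can be produced, and at the archimedean place it reduces to a standard bump-function construction.
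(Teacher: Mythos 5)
Your argument is essentially the paper's: it combines the Fourier-coefficient formula of Proposition~\ref{prop-Fourier-coefficient-of-theta-lift} with the genericity of a cusp form on $\bfH(\AA)$ and the choice of a Schwartz function $\phi$ making the integral of $f_{-\langle v_0,v_0\rangle}$ against $h\mapsto \omega(1,h)\phi(z_{v_0})$ non-zero. The step you flag as the ``main obstacle'' is precisely what the paper isolates as Lemma~\ref{lemma-non-vanishing-theta-lift-lemma}, the analogue of \cite[Lemma 5.1]{PS83} (whose proof the paper likewise omits), so invoking that lemma closes your sketch; your contradiction framing, and your extra care about representing $-t_0$ by $\bfV_0$, do not change the substance of the argument.
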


Before we prove Proposition~\ref{prop-nonvanishing-theta-lift}, we first state a lemma, whose proof is similar to that of \cite[Lemma 5.1]{PS83} and we omit it. 

\begin{lemma}
Let $v_0\in \bfV_0(\mathbb{Q})$ and $z_{v_0}$ be as in Proposition~\ref{prop-Fourier-coefficient-of-theta-lift}. If $W:\mathbf{H}(\mathbb{A})\to \mathbb{C}$ is a continuous function satisfying
$$W(nh)=\chi_{-\langle v_0, v_0\rangle}(n)W(h), \quad n\in \bfN_\mathbb{Y}(\AA), h\in \mathbf{H}(\mathbb{A}),$$
and for any Schwartz function $\phi\in \mathcal{S}((b_1\otimes \bfW)(\AA)) \otimes \mathcal{S}((\bfV_0\otimes w_+)(\AA))$, we have
$$\int_{\mathbf{N}_{\mathbf{Y}}(\mathbb{A})\backslash\mathbf{H}(\mathbb{A})}W(h)(\omega_{\psi}(h)\phi)(z_{v_{0}})dh=0$$
then $W\equiv 0$.
\label{lemma-non-vanishing-theta-lift-lemma}
\end{lemma}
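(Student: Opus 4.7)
The plan follows the Piatetski-Shapiro-style density argument invoked immediately above the lemma. The goal is to show that, as $\phi$ ranges over Schwartz functions, the family $\{h \mapsto (\omega(h)\phi)(z_{v_0})\}$ is dense in the space of continuous, compactly supported functions on $\bfN_\mathbb{Y}(\AA)\backslash \bfH(\AA)$ that transform under left translation by $\bfN_\mathbb{Y}(\AA)$ via the character $\chi_{\langle v_0, v_0\rangle}$. Once this density is in hand, the vanishing hypothesis together with the continuity of $W$ forces $W \equiv 0$ pointwise.

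First I would verify that the integrand descends to the quotient. Applying \eqref{eq-mixed-model-formula2} to $z_{v_0} = b_1\otimes w_- + v_0\otimes w_+$, which has coordinates $x=0$, $y=1$, $v=v_0$ in the notation of Lemma~\ref{lemma-mixed-model-formula}, I obtain for $n = \begin{pmatrix}1 & b\\ & 1\end{pmatrix}$ the identity
$$
(\omega(n)\phi)(z_{v_0}) \;=\; \psi_E(\langle v_0, v_0\rangle\, b)\,\phi(z_{v_0}) \;=\; \chi_{\langle v_0, v_0\rangle}(n)\,\phi(z_{v_0}),
$$
which cancels exactly the factor $\chi_{-\langle v_0, v_0\rangle}(n)$ appearing in the transformation law of $W$. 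Hence the integrand is $\bfN_\mathbb{Y}(\AA)$-invariant.

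Next I would carry out the orbit analysis underlying the density claim. Because $\langle v_0, v_0\rangle \neq 0$ (by the hypothesis of Proposition~\ref{prop-Fourier-coefficient-of-theta-lift}), the stabilizer of $z_{v_0}$ in $\bfH$ is exactly $\bfN_\mathbb{Y}$, acting on $\phi(z_{v_0})$ through the character $\chi_{\langle v_0, v_0\rangle}$ under the Weil representation. Combined with the Iwasawa decomposition $\bfH(\AA) = \bfN_\mathbb{Y}(\AA)\bfM_\mathbb{Y}(\AA) K_{\bfH}$ and the explicit formulas \eqref{eq-mixed-model-formula1}--\eqref{eq-mixed-model-formula2}, supplemented by the Fourier-transform description of the Weil action of the non-trivial Weyl element, the orbit map $h \mapsto \omega(h^{-1})\cdot z_{v_0}$ furnishes a locally closed embedding (up to a nowhere-vanishing smooth cocycle coming from the splitting $s_{\psi,\chi}$) of $\bfN_\mathbb{Y}(\AA)\backslash \bfH(\AA)$ into $(b_1\otimes \bfW)(\AA) \oplus (\bfV_0 \otimes w_+)(\AA)$. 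A partition-of-unity argument then lifts any smooth compactly supported function $g$ on the quotient (with the correct $\bfN_\mathbb{Y}$-transformation) to a Schwartz function $\phi = \phi_\infty \otimes \phi_f$ with $(\omega(h)\phi)(z_{v_0}) = g(h)$.

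The hypothesis therefore implies that $\int_{\bfN_\mathbb{Y}(\AA)\backslash \bfH(\AA)} W(h)\, g(h)\, dh = 0$ for all such $g$, and since these functions are dense in $C_c$ of the quotient, continuity of $W$ gives $W \equiv 0$. The main obstacle will be the concrete orbit/embedding analysis: parameterizing the $\bfH(\AA)$-orbit of $z_{v_0}$, accounting for the scalar cocycle arising from the metaplectic normalization, and constructing the lifts $\phi$ uniformly across archimedean and non-archimedean places so that the partition-of-unity gluing produces a bona fide Schwartz function on the adelic space.
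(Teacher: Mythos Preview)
Your proposal is correct and follows exactly the Piatetski--Shapiro density argument from \cite[Lemma~5.1]{PS83} that the paper itself invokes; the paper omits the proof entirely, citing only that reference. One minor point of phrasing: the Weil representation does not literally act on the point $z_{v_0}$, so the sentences about ``the stabilizer of $z_{v_0}$'' and ``the orbit map $h\mapsto \omega(h^{-1})\cdot z_{v_0}$'' should be reformulated as statements about the evaluation map $h\mapsto (\omega(h)\phi)(z_{v_0})$ --- on $\bfM_\mathbb{Y}(\AA)$ this takes the form $c(a)\,\phi(b_1\otimes \overline{a}^{-1}w_-,\, av_0\otimes w_+)$ by \eqref{eq-mixed-model-formula1}, and it is the injectivity of $a\mapsto (\overline{a}^{-1},av_0)$ that drives the density --- but this is a cosmetic fix and the substance of your argument is sound.
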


\begin{proof}[Proof of Proposition~\ref{prop-nonvanishing-theta-lift}]
Let $f$ be a non-zero cusp form belonging to $\tau$. Since $f$ is necessarily generic, there exists a vector $v_0\in \mathbf{V}_0$ such that the $\chi_{-\langle v_0, v_0\rangle}$th Fourier coefficient of $f$ is non-zero. Moreover,
\begin{equation*}
f_{-\langle v_0, v_0\rangle} (nh)=\chi_{-\langle v_0, v_0\rangle}(n)f_{-\langle v_0, v_0\rangle} (h), \quad n\in \bfN_\mathbb{Y}(\AA), h\in \mathbf{H}(\mathbb{A}),
\end{equation*}
and so Lemma~\ref{lemma-non-vanishing-theta-lift-lemma} implies that there exists $\phi\in \mathcal{S}((b_1\otimes \bfW)(\AA)) \otimes \mathcal{S}((\bfV_0\otimes w_+)(\AA))$ such that 
\begin{equation*}
\int_{\mathbf{N}_{\mathbf{Y}}(\mathbb{A})\backslash\mathbf{H}(\mathbb{A})} f_{-\langle v_0, v_0\rangle} (h)(\omega_{\psi}(h)\phi)(z_{v_{0}})dh \not=0.
\end{equation*}
Now we use \eqref{eq-Fourier-coeff-of-theta-lift} to conclude that $\theta(f, \phi)_{\eta_{v_0}}(1)\not=0$, as desired. 
\end{proof}
\subsection{The theta lifts of Poincar\'e Series}
\label{subsec:The-theta-lifts-of-poincare-series}
Let $t\in \mathbb{Q}$, and let $\mu_t:\bfH(\AA)\to \CC$ be a function which satisfies $\mu_t(n(x)g)=\psi(tx)\mu_t(g)$ for $n(x)=\begin{pmatrix}1 & x\\ & 1 \end{pmatrix}\in \bfN_\mathbb{Y}(\AA)$. Associated to $\mu_t$, one can define a Poincar\'e series
\begin{equation*}
    P(h; \mu_t)= \sum_{\gamma\in \bfN_\mathbb{Y}(\mathbb{Q})\backslash \bfH(\mathbb{Q})} \mu_t(\gamma h),
\end{equation*}
which is an automorphic function when this sum converges absolutely. The following result computes the theta lift $P(h;\mu_t)$. 

\begin{lemma}
\label{lemma-theta-lifting-of-Poincare-series}
Suppose that the sum defining $P(h;\mu_t)$ converges absolutely to a cuspidal automorphic form on $\bfH(\AA)$. Let $\phi\in \mathcal{S}(\mathbb{X}(\AA))$. Suppose that
\begin{equation}
\label{eq-lemma-theta-lifting-of-Poincare-series-1}
    \displaystyle{\sum_{ v\in \bfV(\mathbb{Q})}} \int_{\bfN_\mathbb{Y}(\QQ)\backslash \bfH(\AA)} |\mu_t(h)| |   \omega(g, h)\phi(v\otimes w_+)| dh <\infty.
\end{equation}
Then we have
\begin{equation}
\label{formal-poincare-lift}
    \theta(P(\cdot; \mu_t);\phi)(g) = \sum_{v\in \bfV(\mathbb{Q}): \langle v, v \rangle =-t} \int_{\bfN_\mathbb{Y}(\AA)\backslash \bfH(\AA)} \mu_t(h)   \omega(g, h)\phi(v\otimes w_+) dh.
\end{equation}
\end{lemma}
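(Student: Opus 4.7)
The plan is to substitute the definitions of the theta lift and the Poincar\'e series, unfold, and then collapse the resulting inner integral using the elementary observation that the action of $\bfN_\YY$ on $\phi$ through the Weil representation is multiplication by a character of the form $\psi(\langle v,v\rangle\,\cdot\,)$. The absolute convergence hypothesis \eqref{eq-lemma-theta-lifting-of-Poincare-series-1} justifies every interchange of sum and integral carried out below.

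First I would unfold the Poincar\'e series: using left $\bfH(\QQ)$-invariance of $\theta(g,\cdot\,;\phi)$, write
\begin{align*}
\theta(P(\cdot;\mu_t);\phi)(g)
&=\int_{[\bfH]}\theta(g,h;\phi)\sum_{\gamma\in\bfN_\YY(\QQ)\backslash\bfH(\QQ)}\mu_t(\gamma h)\,dh \\
&=\int_{\bfN_\YY(\QQ)\backslash\bfH(\AA)}\theta(g,h;\phi)\mu_t(h)\,dh.
\end{align*}
Next I would expand $\theta(g,h;\phi)=\sum_{v\in\bfV(\QQ)}\omega(g,h)\phi(v\otimes w_+)$ (note $\XX(\QQ)=\bfV(\QQ)\otimes w_+$), and interchange the sum and integral; this is exactly what hypothesis \eqref{eq-lemma-theta-lifting-of-Poincare-series-1} licenses. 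I would then further unfold the outer integration as
$$
\int_{\bfN_\YY(\QQ)\backslash\bfH(\AA)}F(h)\,dh=\int_{\bfN_\YY(\AA)\backslash\bfH(\AA)}\Bigl(\int_{\bfN_\YY(\QQ)\backslash\bfN_\YY(\AA)}F(n(x)h)\,dx\Bigr)dh,
$$
applied to $F(h)=\mu_t(h)\,\omega(g,h)\phi(v\otimes w_+)$ for each fixed $v$.

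The key step is to evaluate the inner $\bfN_\YY(\QQ)\backslash\bfN_\YY(\AA)$ integral. On the one hand $\mu_t(n(x)h)=\psi(tx)\mu_t(h)$ by assumption. On the other hand, combining the formulas in \eqref{eqn:WeilRepSch} with the fact that elements of $\bfG$ preserve the hermitian form on $\bfV$, one computes
$$
\omega(g,n(x)h)\phi(v\otimes w_+)=\psi(\langle v,v\rangle x)\,\omega(g,h)\phi(v\otimes w_+).
$$
Therefore the inner integral factors as
$$
\mu_t(h)\,\omega(g,h)\phi(v\otimes w_+)\cdot\int_{\QQ\backslash\AA}\psi\bigl((t+\langle v,v\rangle)x\bigr)\,dx,
$$
and the character integral equals $1$ when $\langle v,v\rangle=-t$ and $0$ otherwise. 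Substituting this in yields precisely \eqref{formal-poincare-lift}.

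The only non-trivial point is the justification of the interchange of summation and integration, as well as the Fubini step implicit in unfolding to $\bfN_\YY(\AA)\backslash\bfH(\AA)$; both are subsumed by \eqref{eq-lemma-theta-lifting-of-Poincare-series-1}. Aside from that, the argument is an essentially formal manipulation, and it mirrors the orthogonal case in \cite{Pollack21b}. I would therefore keep the exposition brief, citing \cite{Pollack21b} for the formal identity and recording only the verification of the transformation law $\omega(g,n(x)h)\phi(v\otimes w_+)=\psi(\langle v,v\rangle x)\omega(g,h)\phi(v\otimes w_+)$, since this is the unitary-group feature that distinguishes our calculation from the one in \emph{loc.\ cit.}
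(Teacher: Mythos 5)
Your proof is correct, and it is exactly the standard unfolding argument that the paper itself omits, deferring instead to the orthogonal-case computation in \cite{Pollack21b}: unfold the Poincar\'e series against the theta kernel, expand the kernel over $\XX(\QQ)$, and collapse the inner $[\bfN_\YY]$-integral using $\omega(g,n(x)h)\phi(v\otimes w_+)=\psi(\langle v,v\rangle x)\,\omega(g,h)\phi(v\otimes w_+)$ (valid since $\langle v,v\rangle\in\QQ$ and $\bfG$ preserves the form), with hypothesis \eqref{eq-lemma-theta-lifting-of-Poincare-series-1} licensing the Fubini steps. No gaps.
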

\begin{proof}
Since $P(h;\mu_t)$  is cuspidal, the integral defining the $\theta$-lift $\theta(P(\cdot; \mu_t);\phi)(g)$ converges absolutely. Then 
\begin{equation*}
\begin{split}
    \theta(P(\cdot; \mu_t);\phi)(g) &=\int\limits_{[\bfH]} \theta(g,h;\phi)P(h;\mu_t)dh  \\
    &= \int\limits_{\bfN_\bfY(\mathbb{Q})\backslash \bfH(\AA)} \theta(g, h;\phi)\mu_t(h)dh\\
    &= \int\limits_{\bfN_\bfY(\AA)\backslash \bfH(\AA)} \mu_t(h) \int\limits_{[\bfN_\bfY]} \theta(g, nh;\phi) \psi_t(n) dn dh.
\end{split}
\end{equation*}
The finiteness assumption guarantees that the above formal computations are justified. 
Notice that for $n=n(x)=\begin{pmatrix} 1 & x \\ & 1\end{pmatrix}\in \bfN_\bfY(\AA)$, we have
\begin{equation*}
\begin{split}
    \theta(g, n(x)h;\phi) =\sum_{v\in \bfV(\mathbb{Q})}\omega(g, n(x)h)\phi(v\otimes w_+)  =  \sum_{v\in \bfV(\mathbb{Q})} \psi_E( \langle v, v\rangle x) \omega(g, h)\phi(v\otimes w_+).
\end{split}
\end{equation*}
Thus
\begin{equation*}
\begin{split}
    \int\limits_{[\bfN_\bfY]} \theta(g, nh;\phi) \psi_t(n) dn = \sum_{v\in \bfV(\mathbb{Q}): \langle v, v \rangle =-t}    \omega(g, h)\phi(v\otimes w_+) .
\end{split}
\end{equation*}
The result follows.
\end{proof}

\section{Theta Lifts of Holomorphic Poincar\'e Series to $\mathbf{G}$}
\label{section-theta-lifting}
In this section we define the adelic versions of the quaternionic modular forms $\theta(\xi_f,\phi_0)$ of Theorem \ref{thm2-intro}. These modular forms are obtained as the theta lifts of anti-holomorphic modular forms on $\U(1,1)$ with a special choice of archimedean test data. This choice of test data is explicitly defined in \ref{subsec:archimedean-test-data}. In subsection \ref{subsection:Quaternionic-Poincare-Lifts-on-G} we make an archimedean computation of $\theta(\xi_f,\phi_0)$ in the case when $\xi_f$ corresponds to a holomorphic Poincar\'e series on $\U(1,1)$. Based on these computations we are able to deduce Theorem \ref{thm2-intro} modulo the statement that $\theta(\xi_f,\phi_0)$ satisfies the differential equations $D_{\ell}^{\pm}\theta(\xi_f,\phi_0)\equiv 0$. This is established in Section~\ref{section-Fourier-coefficients-of-theta-lift} as Theorem \ref{thm-quaternionicity-B_l,v}.
\subsection{Holomorphic Modular Forms on $\mathbf{H}$}
Recall that $\bfH=\U(\bfW)$ denotes the quasi-split unitary group attached to the skew Hermitian space $\bfW=E\linspan\{w_+,w_-\}$. We write elements of $\bfH$ as matrices relative to the basis $\{w_+,w_-\}$ with the convention that $\bfH$ acts on the right of $\bfW$.
Let $K_{\infty}'$ be a the maximal compact subgroup of $\mathbf{H}(\RR)$ with an isomorphism 
$$
K_{\infty}'\cong \left\{z\begin{pmatrix}\cos\theta & -\sin\theta\\ \sin\theta & \cos\theta\end{pmatrix}\colon z\in \CC^1, \theta\in [0,2\pi)\right\}.
$$ 
Let $\mathcal{H}_{1,1}=\left\{ z\in \mathbb{C}: \mathrm{Im}(z)>0 \right\}$ be the hermitian upper half space. Then $h=\begin{pmatrix} a & b\\ c&d\end{pmatrix}\in \mathbf{H}(\RR)$ acts on $z\in \mathcal{H}_{1,1}$ by the usual linear fractional transformation. 
The automorphy factor associated to $h=\begin{pmatrix} a & b\\ c&d\end{pmatrix}\in \mathbf{H}(\RR)$ and $z\in \mathcal{H}_{1,1}$ is
\begin{equation*}
    j(h, z)=cz+d.
\end{equation*}
Given $f$ a cuspidal automorphic form on $\mathbf{H}(\AA)$, we say that $f$ is associated to a holomorphic modular form of weight $k$ if for each $h_{\mathrm{fin}}\in \bfH(\AA_{\mathrm{fin}})$, the function $h_{\infty}\mapsto j(h_{\infty},i)^{k}f(h_{\infty}h_{\mathrm{fin}})$ descends to a holomorphic function on $\cH_{1,1}$. It follows that if $f$ is associated to a holomorphic modular form on $\bfH$, then the Fourier expansion of $f$ takes the form 
\begin{equation*}
f(h_\fin h_\infty)=\sum_{t>0} a_f(t)(h_\fin)j(h_\infty, i)^{-N} e^{2\pi i t(i \cdot h_\infty)}
\end{equation*}
where $a_f(t): \mathbf{H}(\AA_\fin)\to \mathbb{C}$ is specific locally constant function depending on $f$ and $t$. By a result of Shimura, the Fourier coefficients $a_f(t)$ endow the space of modular forms on $\bfH$ with an algebraic structure. More precisely, if $K_{\mathrm{fin}}\leq \bfH(\AA_{\mathrm{fin}})$ is a level subgroup, then there exists a number field $L/\QQ$ such that the space of weight $k$ modular forms on $\bfH$ of level $K_f$ admits a basis consisting of forms $f$ for which the Fourier coefficients $a_f(t)$ takes values in $L$ \cite{Shimura1975, Shimura1978, Harris86}. 
\subsection{Archimedean Test Data}
\label{subsec:archimedean-test-data}
Our current goal is to use Lemma \ref{lemma-theta-lifting-of-Poincare-series} as a means of constructing quaternionic modular forms on $\bfG$. Notice that if the test function $\phi$ and the inducing section $\mu$ are factorizable, then the integral appearing inside the summand of expression \eqref{formal-poincare-lift} is Eulerian. In this section, we describe a choice of test data $\phi_{\infty}\in \cS(\XX(\RR))$ for which we can explicitly compute the archimedean component of this Eulerian integral. To begin, let $(\ell, n)$ be a pair of integers satisfying either $\ell\geq n$ or $1\leq \lfloor \frac{n-1}{2}\rfloor\leq \ell<n$.
Recall the orthogonal decomposition $V=V_{2}^{+}\oplus V_{n}^{-}$ given in (\ref{max-compact-identified}). 
For $v\in V$, denote $\|v\|:=\sqrt{\langle v,v\cdot \iota\rangle }$ (recall that $\iota$ is the element in $K_\infty$ which acts as identity on $V_2^+$ and acts as negative identity on $V_n^-$) and let $(v, w):=\langle v, w\cdot \iota \rangle$ be the positive definite hermitian form on $V$. Recall $\{u_1, u_2\}$ is an orthonormal basis of $V_2^+$. As a $\U(V_2^+)$-representation, $\overline{V_{2}^{+}} \simeq V_{2}^{+}\otimes \det {}_{\U(V_2^+)}^{-1}$ via the map $\delta$ given by $\overline{u}_2\mapsto u_1$, $\overline{u}_1\mapsto -u_2$. We thus obtain a map
\begin{equation*}
P_K:\mathrm{Sym}^{\ell}V_{2}^{+}\otimes \mathrm{Sym}^{\ell}\overline{V_{2}^{+}} \simeq \mathrm{Sym}^{\ell}V_{2}^{+}\otimes \mathrm{Sym}^{\ell}V_{2}^{+}\otimes \det {}_{\U(V_2^+)}^{-\ell}\to \VV_{\ell}
\end{equation*}
which is equivariant under $K_{\infty}= \U(V_2^+)\times \U(V_n^-)$. Here the last map is given by multiplication. 
Denote by $\mathrm{pr}_{2}$ the projection from $V$ to $V_{2}^{+}$, which is equivariant under $K_{\infty}$. Note that if $v\in V$ with $\langle v, v\rangle>0$, then $\Vert \mathrm{pr}_{2}(v)\Vert>0$. For $v\in V$, define 
\begin{equation*}
    Q_{\ell}\colon V\to \VV_{\ell}, \quad Q_{\ell}(v):=P_K(\mathrm{pr}_{2}(v)^{\ell}\otimes \mathrm{pr}_{2}(\overline{v})^{\ell}),
\end{equation*}
and set
\begin{equation}
    \phi_\infty(v\otimes w_+):=Q_{\ell}(v) e^{-2\pi \|v\|^{2}}.
\label{eq-phi_infty}
\end{equation}
\begin{lemma}
\label{lemma-Weil-repn-archimedean}
For $k_\theta=\begin{pmatrix}\cos\theta & -\sin\theta\\ \sin\theta & \cos\theta\end{pmatrix}\in K_\infty^\prime$, we have
\begin{equation}
\label{equation-Weil-repn-archimedean}
\omega_\infty(k_\theta)\phi_\infty(v\otimes w_+)= j(k_\theta,i)^{-2\ell-2+n}\phi_\infty(v\otimes w_+).
\end{equation}
\end{lemma}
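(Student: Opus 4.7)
The plan is to exponentiate an infinitesimal computation. Observe that $k_\theta = \exp(\theta X)$ for $X = \begin{pmatrix} 0 & -1 \\ 1 & 0 \end{pmatrix} \in \mathfrak{u}(1,1)$, and that $j(k_\theta, i) = \cos\theta + i\sin\theta = e^{i\theta}$. Consequently \eqref{equation-Weil-repn-archimedean} is equivalent to the infinitesimal assertion
\[
d\omega_\infty(X)\phi_\infty \;=\; i\,(n - 2\ell - 2)\,\phi_\infty,
\]
where $d\omega_\infty$ denotes the derived representation of $\mathfrak{u}(1,1)$ on $\mathcal{S}(V)$. Once this is established, the lemma follows by integration along the one-parameter subgroup generated by $X$.

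To evaluate $d\omega_\infty(X)$, decompose $X = -e + f$ with $e = \begin{pmatrix} 0 & 1 \\ 0 & 0 \end{pmatrix}$ and $f = \begin{pmatrix} 0 & 0 \\ 1 & 0 \end{pmatrix}$. The unipotent formula in \eqref{eqn:WeilRepSch} gives, after differentiating in $b$,
$d\omega_\infty(e)\phi(v\otimes w_+) = 2\pi i\,\langle v, v\rangle\,\phi(v\otimes w_+)$.
For $f$, I would use the conjugation relation $\exp(tf) = w_0^{-1}\exp(-te)w_0$, where $w_0 = \begin{pmatrix} 0 & 1 \\ -1 & 0 \end{pmatrix}$, together with the Weyl element formula in \eqref{eqn:WeilRepSch} identifying $\omega_\infty(w_0)$ with $\gamma_\infty \mathcal{F}$, where $\mathcal{F}$ is the Fourier transform on $\mathcal{S}(V)$ associated to $\psi_{E,\infty}(\langle\cdot,\cdot\rangle)$. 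This realizes $d\omega_\infty(f)$ as the constant-coefficient second-order differential operator $-\mathcal{F}^{-1}\circ M_{2\pi i\langle v,v\rangle}\circ\mathcal{F}$.

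Apply $d\omega_\infty(X)$ to $\phi_\infty(v\otimes w_+) = Q_\ell(v)\,e^{-2\pi\|v\|^2}$ using the $K_\infty$-invariant decomposition $V = V_2^+ \oplus V_n^-$ together with the identities $\langle v,v\rangle = \|v_+\|^2 - \|v_-\|^2$ and $\|v\|^2 = \|v_+\|^2 + \|v_-\|^2$. In orthonormal real coordinates on each factor, $d\omega_\infty(f)$ becomes a linear combination of the positive-definite Laplacians $\Delta_+$ on $V_2^+$ and $\Delta_-$ on $V_n^-$ with opposite signs. Applying the Laplacian to the Gaussian factor produces constants proportional to the real dimensions of $V_2^+$ (namely $4$) and $V_n^-$ (namely $2n$) together with quadratic growth terms in $\|v_\pm\|^2$. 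Applying the Laplacian further to the polynomial factor $Q_\ell(v)$, which depends only on $\mathrm{pr}_2(v)$ and is bihomogeneous of bidegree $(\ell,\ell)$ in $\mathrm{pr}_2(v),\mathrm{pr}_2(\overline v)$, contributes an additional shift via the Euler identity $\sum_k x_k\partial_{x_k}Q_\ell = 2\ell Q_\ell$. One then checks that the $\|v_\pm\|^2$-dependent terms produced by $d\omega_\infty(f)$ cancel against those coming from multiplication by $-2\pi i\langle v,v\rangle = d\omega_\infty(-e)$, leaving exactly the scalar $i(n-2\ell-2)$.

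The main obstacle is the bookkeeping in the final cancellation: one must check term-by-term that the quadratic tails from $\Delta_+(Q_\ell e^{-2\pi\|v_+\|^2})$ and $\Delta_-(e^{-2\pi\|v_-\|^2})$ match the corresponding tails in $-2\pi i\langle v,v\rangle\phi_\infty$. Conceptually this cancellation reflects the fact that $\phi_\infty$ is a weight vector for the compact Cartan subalgebra of $\mathfrak{u}(1,1)$ in the Weil representation, with weight splitting as $-(2\ell+2)$ from the positive-definite sub-representation on $\mathcal{S}(V_2^+)$ (where $Q_\ell$ shifts the Gaussian weight by $-2\ell$) and $+n$ from the negative-definite sub-representation on $\mathcal{S}(V_n^-)$; these combine to give the exponent $-2\ell-2+n$ asserted in the lemma.
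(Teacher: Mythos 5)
Your overall route coincides with the paper's: the published proof consists precisely of the remark that it suffices to compute the differential of the $K_\infty^\prime$-action on $\phi_\infty$ (with the details omitted, citing Lion--Vergne for the $\SL_2$ analogue), and your reduction of \eqref{equation-Weil-repn-archimedean} to the infinitesimal statement $d\omega_\infty(X)\phi_\infty=i(n-2\ell-2)\phi_\infty$ for $X=-e+f$, with $d\omega_\infty(e)$ read off from the unipotent formula in \eqref{eqn:WeilRepSch} and $d\omega_\infty(f)=-\mathcal{F}^{-1}\circ M_{2\pi i\langle v,v\rangle}\circ\mathcal{F}$ obtained by conjugating by the Weyl element, is a correct way of carrying that computation out.

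There is, however, one genuine gap in your final cancellation step: bihomogeneity of $Q_\ell$ together with the Euler identity is not enough; you also need $Q_\ell$ to be annihilated by the flat Laplacian $\Delta_+$ on $V_2^+$. Expanding $\Delta_+\bigl(Q_\ell e^{-2\pi\|v_+\|^2}\bigr)$ produces three kinds of terms: $(\Delta_+Q_\ell)e^{-2\pi\|v_+\|^2}$, the cross term (this is where the Euler identity enters), and $Q_\ell\,\Delta_+\bigl(e^{-2\pi\|v_+\|^2}\bigr)$; your bookkeeping accounts only for the last two, and if $\Delta_+Q_\ell\neq 0$ the leftover term is not proportional to $\phi_\infty$, so the eigenvector claim fails. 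Indeed, your argument as written would apply verbatim to $Q(v)=\|\mathrm{pr}_2(v)\|^{2\ell}$, which is bihomogeneous of bidegree $(\ell,\ell)$ and satisfies the same Euler identity, yet $Q(v)e^{-2\pi\|v\|^2}$ is not a $K_\infty^\prime$-eigenvector. The missing verification is exactly where the twist $\delta$ in the definition of $P_K$ is used: writing $\mathrm{pr}_2(v)=z_1u_1+z_2u_2$, one has $Q_\ell(v)=(z_1u_1+z_2u_2)^\ell(\overline{z}_2u_1-\overline{z}_1u_2)^\ell$, and
\begin{equation*}
\tfrac14\Delta_+Q_\ell=(\partial_{z_1}\partial_{\overline{z}_1}+\partial_{z_2}\partial_{\overline{z}_2})Q_\ell
=\ell^2\,(z_1u_1+z_2u_2)^{\ell-1}(\overline{z}_2u_1-\overline{z}_1u_2)^{\ell-1}\bigl(u_1\cdot(-u_2)+u_2\cdot u_1\bigr)=0,
\end{equation*}
since multiplication in $\Sym^{2\ell}V_2^+$ is commutative. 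With this harmonicity in hand your computation closes: the $V_2^+$-factor contributes weight $-\bigl(\tfrac{4}{2}+2\ell\bigr)=-(2\ell+2)$, the $V_n^-$-factor (negative definite, polynomial degree $0$) contributes $+\tfrac{2n}{2}=+n$, giving the eigenvalue $i(n-2\ell-2)$ and hence \eqref{equation-Weil-repn-archimedean}.
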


\begin{proof}
To prove the lemma, it suffices to compute the differential of the action of $K_\infty^\prime$ on $\phi_\infty(v\otimes w_+)$. The proof is standard (see for example \cite[Lemma 2.5.14 and Proposition 2.5.15]{LionVergne1980} for a proof of the analogous statement in the case of $\mathrm{SL}_2$). We omit the details.
\end{proof}
\subsection{Theta lifts of Poincar\'e series to $\mathbf{G}$}
\label{subsection:Quaternionic-Poincare-Lifts-on-G}
Fix $t\in \QQ_{>0}$ and let $\ell$ and $n$ be as in subsection \ref{subsec:archimedean-test-data}.
We are ready to explicate the archimedean integral appearing in the theta lifts of Poincar\'e series. As stated in Theorem \ref{thm2-intro}, we intend to theta lift weight $N=2\ell+2-n$ anti-holomorphic modular forms on $\bfH$ to obtain weight $\ell$ quaternionic modular forms on $\bfG$. As such, we take the archimedean component of the inducing section in the Poincar\'e series to be
\begin{equation}
\label{eq-archimedean_section}
    \mu_{-t,\infty}(h)=\overline{\det(h)^{\ell+2}j(h,i)^{-N}e^{2\pi i t(i \cdot h)}}.
\end{equation}
Suppose $v\in V$ with $\langle v, v\rangle=t$. With $\phi_{\infty}$ as defined in subsection \ref{subsec:archimedean-test-data} and $\mu_{-t,\infty}$ as above, Lemma \ref{formal-poincare-lift} outputs the archimedean integral
\begin{equation}
\label{equation-archimedean-local-integral}
I_{\infty}(v; t):=\int_{\bfN_\mathbb{Y}(\RR)\backslash \bfH(\RR)} \mu_{-t,\infty}(h)\omega_{\infty}(g,h)\phi_\infty(v\otimes w_+)dh.
\end{equation}
\begin{prop}
\label{prop-archimedean-integral}
Let $v$ be in $V$ with $\langle v, v\rangle =t>0$. Then there exists $C\in \CC^{\times}$ (independent of $v$) such that 
\begin{equation*}
I_{\infty}(v; t)=C\cdot B_{\ell,v}(g)
\end{equation*}
where $B_{\ell,v}(g)=\dfrac{Q_{\ell}(vg)}{\Vert\mathrm{pr}_{2}(vg)\Vert^{4\ell+2}}$.
\end{prop}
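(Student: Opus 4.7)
The plan is to compute $I_\infty(v;t)$ explicitly using the Iwasawa decomposition $\bfH(\RR) = \bfN_\mathbb{Y}(\RR)\cdot A\cdot K_\infty'$, where $A=\{a_r:=\mathrm{diag}(r,r^{-1}):r>0\}$. Parametrizing cosets in $\bfN_\mathbb{Y}(\RR)\backslash\bfH(\RR)$ by $(r,k)\in \RR_{>0}\times K_\infty'$, the quotient measure descends to $dh = r^{-3}\,dr\,dk$ up to normalization, so $I_\infty(v;t)$ factors as an iterated integral over these coordinates.

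For the $K_\infty'$-integration I would use $\omega_\infty(g,a_rk)=\omega_\infty(g,a_r)\omega_\infty(1,k)$. Lemma~\ref{lemma-Weil-repn-archimedean} gives $\omega_\infty(1,k_\theta)\phi_\infty = j(k_\theta,i)^{-N}\phi_\infty$ for $k_\theta\in \mathrm{SO}(2)$, and the Schr\"odinger formulas~\eqref{eqn:WeilRepSch} give $\omega_\infty(1,zI_2)\phi_\infty=\chi_\infty(z)^{n+2}\phi_\infty$ for $z\in \CC^1$. A direct calculation from \eqref{eq-archimedean_section} shows
\begin{equation*}
\mu_{-t,\infty}(a_r z k_\theta) = r^N\,\overline{z^{n+2}}\,\overline{j(k_\theta,i)^{-N}}\,e^{-2\pi t r^2}.
\end{equation*}
Since $|j(k_\theta,i)|=1$, the $k_\theta$-dependence cancels against the Weil representation factor, and choosing $\chi_\infty|_{\CC^1}(z)=z$ (which is consistent with the class-field-theoretic constraint $\chi|_{\AA^\times}=\mathrm{sgn}_{E/\QQ}$) makes the $\CC^1$-dependence cancel as well. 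The $K_\infty'$-integration therefore contributes only a volume constant, reducing the problem to the $A$-integral of $\mu_{-t,\infty}(a_r)\,\omega_\infty(g,a_r)\phi_\infty(v\otimes w_+)$.

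Using $\omega_\infty(g,a_r)\phi_\infty(v\otimes w_+)=r^{n+2}\phi_\infty(rvg\otimes w_+)$ together with the homogeneity $Q_\ell(rx)=r^{2\ell}Q_\ell(x)$, this equals $r^{n+2+2\ell}Q_\ell(vg)\,e^{-2\pi r^2\|vg\|^2}$. Combined with $\mu_{-t,\infty}(a_r)=r^N e^{-2\pi t r^2}$ and the measure factor $r^{-3}\,dr$, the $r$-integral reduces to the elementary Gaussian
\begin{equation*}
\int_0^\infty r^{4\ell+1}e^{-2\pi r^2(\|vg\|^2+t)}\,dr = \frac{\Gamma(2\ell+1)}{2(2\pi)^{2\ell+1}}\bigl(\|vg\|^2+t\bigr)^{-(2\ell+1)},
\end{equation*}
where the exponent $4\ell+1$ arises as $N+(n+2+2\ell)-3$ with $N=2\ell+2-n$.

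Finally, writing $vg=v_++v_-$ with $v_+\in V_2^+$ and $v_-\in V_n^-$, the identities $\|vg\|^2=\langle v_+,v_+\rangle-\langle v_-,v_-\rangle$ and $t=\langle vg,vg\rangle=\langle v_+,v_+\rangle+\langle v_-,v_-\rangle$ (using $g\in \U(V)$) yield $\|vg\|^2+t=2\langle v_+,v_+\rangle=2\|\mathrm{pr}_2(vg)\|^2$. Substituting gives $I_\infty(v;t)=C\cdot Q_\ell(vg)\,\|\mathrm{pr}_2(vg)\|^{-(4\ell+2)}=C\cdot B_{\ell,v}(g)$ for a constant $C\in \CC^\times$ depending only on $t,\ell,n$ and measure normalizations. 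The main obstacle is the bookkeeping: aligning conventions (the right-action of $\bfH$ on $\mathcal{H}_{1,1}$, the archimedean restriction $\chi_\infty|_{\CC^1}$, and the Iwasawa Haar normalization) so that the characters cancel in the $K_\infty'$-step; once this is in place, the remainder is a single Gaussian integral together with an elementary identity for $\|vg\|^2+t$.
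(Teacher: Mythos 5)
Your proposal is correct and follows essentially the same route as the paper's proof: Iwasawa decomposition of $\bfH(\RR)$, cancellation of the automorphy factor against the $K_\infty'$-action of the Weil representation (Lemma~\ref{lemma-Weil-repn-archimedean}), a Gaussian integral over the torus, and the identity $t+\|vg\|^{2}=2\|\mathrm{pr}_{2}(vg)\|^{2}$. The only cosmetic difference is that you split $\bfM_\mathbb{Y}(\RR)\cong\CC^{\times}$ as $\RR_{>0}\times\CC^{1}$ and absorb $\CC^{1}$ into $K_\infty'$ (handled via the Schr\"odinger formula with $\chi_\infty(z)=z/|z|$, the same normalization the paper uses implicitly), and you evaluate the Gaussian directly rather than via the substitution $a\mapsto a/\|\mathrm{pr}_2(v)\|$.
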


\begin{proof}
Without loss of generality we may assume $g=1$.
Then writing $\delta_{\bfP_\mathbb{Y}}$ for the modulus character of $\bfP_\mathbb{Y}(\mathbb{R})=\bfN_\mathbb{Y}(\RR) \bfM_\mathbb{Y}(\RR)$, the Iwasawa decomposition of $\mathbf{H}(\RR)$ implies 
\begin{equation*}
I_{\infty}(v; t)= \int_{\bfM_\mathbb{Y}(\RR)} \int_{0}^{2\pi} \delta_{\bfP_\mathbb{Y}}^{-1}(m) \mu_{-t, \infty}(mk_{\theta}) \omega_\infty(mk_{\theta})\phi_\infty(v\otimes w_+) d\theta dm.
\end{equation*}
For $m\in \bfM_\mathbb{Y}(\RR)$ and $k_\theta=\begin{pmatrix}\cos\theta & -\sin\theta\\ \sin\theta & \cos\theta\end{pmatrix}$ we have $\mu_{-t, \infty}(mk_{\theta})=j(k_\theta, i)^{N}\mu_{-t, \infty}(m)$. So by \eqref{equation-Weil-repn-archimedean} we conclude
\begin{equation*}
I_{\infty}(v; t)= 2\pi  \int_{\bfM_\mathbb{Y}(\RR)} \delta_{\bfP_\mathbb{Y}}^{-1}(m) \mu_{-t, \infty}(m) \omega_\infty(m)\phi_\infty(v\otimes w_+)  dm.
\end{equation*}
We write $m\in \bfM_\mathbb{Y}(\RR)$ as $m=\begin{pmatrix} a & \\  &\overline{a}^{-1}\end{pmatrix}$ with $a\in \CC^\times$. Using the archimedean analogue of the first formula of \eqref{eqn:WeilRepSch}, we compute that
\begin{equation*}
\begin{split}
\omega_{\infty}\left( \begin{pmatrix} a & \\  &\overline{a}^{-1}\end{pmatrix}\right)\phi_{\infty}(v\otimes w_+) &=\left(\frac{a}{|a|}\right)^{n+2}(|a|^2)^{\frac{2+n}{2}}|a|^{2\ell}e^{-2\pi|a|^2 \|v\|^2}Q_{\ell}(v)\\
&=a^{n+2}\vert a\vert^{2\ell}e^{-2\pi\vert a\vert^{2}\|v\|^{2}}Q_{\ell}(v).
\end{split}
\end{equation*}
On the other hand, by the construction of $\mu_{-t, \infty}$ we get that
\begin{equation*}
\mu_{-t, \infty}\left( \begin{pmatrix} a & \\  &\overline{a}^{-1}\end{pmatrix}\right)=\overline{\left(\frac{a}{\overline{a}}\right)^{\ell+2}\overline{a}^Ne^{2\pi i \langle v, v\rangle |a|^2i}}=\overline{a}^{\ell+2}a^{N-\ell-2}e^{-2\pi|a|^2 \langle v, v\rangle }.
\end{equation*}
Noticing $\delta_{\bfP_\mathbb{Y}}\left( \begin{pmatrix} a & \\  &\overline{a}^{-1}\end{pmatrix}\right)=|a|^2$ and $N=2\ell+2-n$, it follows that
\begin{equation*}
\begin{split}
I_{\infty}(v; t) &=  Q_\ell(v) \cdot 2\pi   \int_{\mathbb{C}^\times} |a|^{4\ell+2} e^{-2\pi |a|^2 (\langle v, v\rangle+\|v\|^2)}d^\times a.
\end{split}
\end{equation*}
Finally we make a change of variable $a \mapsto \frac{a}{\Vert\mathrm{pr}_{2}(v)\Vert}$ to arrive at
\begin{equation*}
\begin{split}
I_{\infty}(v; t) &=  \frac{Q_{\ell}(v)}{\Vert\mathrm{pr}_{2}(v)\Vert^{4\ell+2}} \cdot 2\pi   \int_{\mathbb{C}^\times} |a|^{4\ell+2} e^{-4\pi |a|^2 }d^\times a.
\end{split}
\end{equation*}
The integral $\int_{\mathbb{C}^\times} |a|^{4\ell+2} e^{-4\pi |a|^2 }d^\times a$ is non-zero and independent of $v$, thus completing the proof.
\end{proof}
Before applying Proposition \ref{prop-archimedean-integral} to the study of theta lifts of Poincar\'e series to $\mathbf{G}$, we must verify that the convergence condition \eqref{eq-lemma-theta-lifting-of-Poincare-series-1} is satisfied. This is achieved in the Lemma below. 
\begin{lemma}
\label{lemma-convergence-theta-lift}
Suppose $\phi_{\fin}\in \mathcal{S}(\mathbb{X}(\AA_{\mathrm{fin}}))$, and $\mu_{-t,\mathrm{fin}}\in \Ind_{\bfN_\mathbb{Y}(\AA_{\mathrm{fin}})}^{\bfH(\AA_{\mathrm{fin}})}(\chi_{-t,\mathrm{fin}})$. Set $\mu_{-t}=\mu_{-t,\infty}\cdot \mu_{-t,\mathrm{fin}}$. Then for $\ell>n+1$ the sum
\begin{equation*}
    \displaystyle{\sum_{ v\in \bfV(\mathbb{Q})}} \int_{\bfN_\mathbb{Y}(\QQ)\backslash \bfH(\AA)} |\mu_{-t}(h)| |   \omega(g, h)(\phi_{\mathrm{fin}}\otimes\phi_{\infty})(v\otimes w_+) dh 
\end{equation*}
is finite.
\end{lemma}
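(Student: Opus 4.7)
The plan is to reduce the sum–integral to an absolutely convergent lattice sum by factoring over places, exploiting the $\bfN_\mathbb{Y}(\AA)$-invariance of each factor in absolute value, and then estimating the resulting archimedean Gaussian integral.

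First I would observe that both $|\mu_{-t}(h)|$ and $|\omega(g,h)(\phi_{\fin}\otimes\phi_\infty)(v\otimes w_+)|$ are left $\bfN_\mathbb{Y}(\AA)$-invariant: the former because $\mu_{-t}$ transforms under $\bfN_\mathbb{Y}(\AA)$ by the unitary character $\chi_{-t}$, and the latter by the Schr\"odinger-model formula $\omega\bigl(1,\left(\begin{smallmatrix}1 & b\\ & 1\end{smallmatrix}\right)\bigr)\phi(v\otimes w_+)=\psi(\langle v,v\rangle b)\phi(v\otimes w_+)$ from \eqref{eqn:WeilRepSch}. Combined with the fact that $\bfN_\mathbb{Y}(\QQ)\backslash\bfN_\mathbb{Y}(\AA)$ is compact, this replaces the integral over $\bfN_\mathbb{Y}(\QQ)\backslash\bfH(\AA)$ by a constant multiple of the integral over $\bfN_\mathbb{Y}(\AA)\backslash\bfH(\AA)$. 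I then use the Iwasawa decomposition $\bfH(\AA)=\bfN_\mathbb{Y}(\AA)\bfM_\mathbb{Y}(\AA)K$ (with $K=\prod_v K_v$ maximal compact) to factor the remaining integral as a product of local integrals $J_v(v_0)$ indexed by the places of $\QQ$, each over $\bfM_\mathbb{Y}(\QQ_v) K_v$ with the modulus factor $\delta_{\bfP_\mathbb{Y}}^{-1}(m)$ included.

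At the archimedean place, I would rerun the calculation of Proposition~\ref{prop-archimedean-integral} with absolute values throughout. The factor $|\mu_{-t,\infty}(m)|=|a|^{N}e^{-2\pi t|a|^2}$ combines with $|\omega_\infty(g,m k_\theta)\phi_\infty(v\otimes w_+)|=|a|^{n+2+2\ell}e^{-2\pi|a|^2\|vg\|^2}|Q_\ell(vg)|$ (using Lemma~\ref{lemma-Weil-repn-archimedean} to handle $k_\theta$) to produce a Gaussian integrand whose $d^\times a$-integral evaluates explicitly to the bound
\begin{equation*}
J_\infty(v) \leq C_\infty\cdot \frac{|Q_\ell(vg)|}{(t+\|vg\|^2)^{2\ell+1}}.
\end{equation*}
At each finite place $p$, $\phi_p$ is compactly supported and locally constant on $\mathbb{X}(\QQ_p)$, while $\mu_{-t,p}$ is a $K_v'$-finite section induced from $\chi_{-t,p}$. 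Applying the finite-place analogues of \eqref{eqn:WeilRepSch} and the support properties of $\phi_p$, one checks that $J_p(v)$ is uniformly bounded in $v$ and vanishes unless $v g_p$ lies in a compact-open subset of $\bfV(\QQ_p)$ determined by $g_p$, $\phi_p$, and the support of $\mu_{-t,p}$. Taking the product over $p<\infty$, one obtains $\prod_{p<\infty}J_p(v)\leq C_{\fin}\cdot\mathbf{1}_{\Lambda}(v)$ for a lattice $\Lambda\subset\bfV(\QQ)$ depending on $g_\fin$ and the chosen test data.

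Combining all of the above, the full sum–integral is bounded by
\begin{equation*}
C\sum_{v\in\Lambda}\frac{|Q_\ell(vg)|}{(t+\|vg\|^2)^{2\ell+1}}.
\end{equation*}
The final step is an elementary estimate: since $Q_\ell(vg)=P_K(\mathrm{pr}_2(vg)^\ell\otimes \mathrm{pr}_2(\overline{vg})^\ell)$, one has $|Q_\ell(vg)|\leq C'\|\mathrm{pr}_2(vg)\|^{2\ell}\leq C'\|vg\|^{2\ell}$, so the summand is controlled by $\|vg\|^{-(2\ell+2)}$ for $\|vg\|$ large, and the lattice sum is absolutely convergent by integral comparison on the real vector space $V$. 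The main obstacle is the finite-place analysis: pinning down the uniform support bound on $\prod_{p<\infty}J_p(v)$ requires carefully tracking how the Weil-representation formulas of Lemma~\ref{lemma-mixed-model-formula}, together with the compact support of $\phi_{\fin}$, propagate through the $\bfM_\mathbb{Y}(\QQ_p)$-integration. Once this finite-place reduction is in place, the archimedean Gaussian integral and the resulting lattice sum estimate are routine.
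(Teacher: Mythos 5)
Your strategy is the same as the paper's: the paper compresses the unfolding into the phrase ``by the same manipulations as in the proof of Proposition~\ref{prop-archimedean-integral}'' and then estimates the resulting lattice sum, while you spell out the intermediate steps (left $\bfN_\mathbb{Y}(\AA)$-invariance of the absolute values, the Iwasawa decomposition, the archimedean Gaussian integral giving a bound of the shape $\|Q_\ell(vg)\|\,(t+\|vg\|^2)^{-(2\ell+1)}$, and the finite places contributing a bound uniform in $v$ and supported on a lattice $\Lambda\subset\bfV(\QQ)$). Up to that point your write-up is, if anything, more detailed than the paper's, and it matches the paper's intended reduction.

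The gap is in the final counting. The lattice $\Lambda$ has rank $\dim_{\QQ}\bfV=2(n+2)$, so a summand of size $\|vg\|^{-(2\ell+2)}$ yields a convergent sum by integral comparison only when $2\ell+2>2(n+2)$, i.e.\ $\ell\geq n+2$; keeping the sharper numerator $\|\mathrm{pr}_2(vg)\|^{2\ell}$ does not change this threshold (integrating first over the negative-definite directions produces $(t+\|\mathrm{pr}_2(vg)\|^2)^{\,n-2\ell-1}$, and the remaining four-dimensional integral still requires $\ell>n+1$). Since the lemma is used in the range $\ell\geq n$ (Theorem~\ref{prop-relation-of-Fourier-coefficients}), the cases $\ell=n,\,n+1$ are not covered, so your step ``the lattice sum is absolutely convergent by integral comparison on $V$'' would fail there; the summands are positive, so no cancellation can rescue it. You are in good company: the paper's own closing display bounds \eqref{eq-sum-finite} by $\sum_{v\in\Lambda'}(1+\|v\|^{-1})^{-2\ell}$, whose terms tend to $1$, so it does not close the argument either (and its denominator $(t+\|v\|)^{2\ell+1}$ should in any case be $(t+\|v\|^2)^{2\ell+1}$). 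To make this step honest one must either assume $2\ell+2>\dim_{\RR}V$ or extract genuine additional decay in $v$ beyond the trivial bounds (for instance from the finite-place data or from $\mu_{-t,\fin}$), which neither your argument nor the paper's supplies. A minor further point: the finite-place integrals are finite uniformly in $v$ only under some support or integrability hypothesis on $\mu_{-t,\fin}$ modulo $\bfN_\mathbb{Y}(\AA_{\fin})$, which you should state explicitly.
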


\begin{proof}
By the same manipulations as in the proof of Proposition~\ref{prop-archimedean-integral}, the quantity in the lemma is bounded by a constant times
\begin{equation*}
    \displaystyle{\sum_{\tiny v\in \Lambda}}  \| Q_{\ell}(v)\|     \int_{\mathbb{C}^\times} |a|^{4\ell+2} e^{-2\pi |a|^2 (t +\|v\|^2)}d^\times a
\end{equation*}
for some lattice $\Lambda$ in $\bfV(\mathbb{R})$. Note that $t$ is fixed, and both $t$ and $\| v\|$ are non-negative. Also note that for any fixed $\alpha\in \mathbb{R}_{>0}$, there exists a positive constant $C_\alpha$ depending only on $\alpha$ such that
\begin{equation*}
e^{-b}\le C_\alpha b^{-\alpha} \quad \text{ for any $b>0$}.	
\end{equation*}
Then 
\begin{equation*}
\begin{split}
	\int_{\mathbb{C}^\times} |a|^{4\ell+2} e^{-2\pi |a|^2 (t +\|v\|^2)}d^\times a  &= \int_{\mathbb{C}^\times} |a|^{4\ell+2} e^{-2\pi |a|^2 t} e^{ -2\pi |a|^2\|v\|^2}d^\times a    \\
	&\le  \int_{\mathbb{C}^\times} |a|^{4\ell+2} e^{-2\pi |a|^2 t} C_{\alpha} (2\pi |a|^2\|v\|^2)^{-\alpha} d^\times a     \\
	&= C_{\alpha} (2\pi)^{-\alpha}  \cdot \frac{1}{\|v\|^{2\alpha}}   \int_{\mathbb{C}^\times} |a|^{4\ell+2-2\alpha} e^{-2\pi |a|^2 t}  d^\times a .  
\end{split}
\end{equation*}
In order that the integral in the last row converges, we need to take $\alpha$ with
\begin{equation}
\label{eq-lemma-finiteness-1}
4\ell+2-2\alpha>0,
\end{equation}
which is due to the convergence range of the integral expression of the Gamma function. 
On the other hand, we must also check the convergence of 
\begin{equation}
\label{eq-lemma-finiteness-2}
    \sum_{v\in \Lambda} \frac{ \| Q_{\ell}(v)\| }{\|v\| ^{2\alpha}}.
\end{equation}
Since 
$$\|Q_{\ell}(v)\|=\|P_K(\mathrm{pr}_2(v)^{\ell}\otimes\mathrm{pr}_2(\overline{v})^{\ell})\|=\|\mathrm{pr}_2(v)\|^{2\ell} \le \|v\|^{2\ell},$$ 
the sum~\eqref{eq-lemma-finiteness-2} is bounded by 
\begin{equation}
\label{eq-lemma-finiteness-3}
    \sum_{v\in \Lambda} \frac{1}{\|v\| ^{2\alpha-2\ell}}.
\end{equation}
In order that the sum~\eqref{eq-lemma-finiteness-3} converges, we need to take $\alpha$ with
\begin{equation}
\label{eq-lemma-finiteness-4}
\alpha-\ell>n+2,	
\end{equation}
which is due to the convergence range of the Epstein zeta function attached to the positive definite hermitian form $\| \cdot\|$. Combining \eqref{eq-lemma-finiteness-1} and \eqref{eq-lemma-finiteness-4}, it follows that when $\ell>n+1$, the sum in the lemma is convergent. 
\end{proof}
Applying Lemma \ref{lemma-theta-lifting-of-Poincare-series} we have that $\theta(P(\cdot;\mu_{-t});\phi_{\mathrm{fin}}\otimes\phi_{\infty})$ is a non-zero constant times
$$
\sum_{v\in \bfV(\QQ)\colon \langle v,v\rangle =-t}\int_{\bfN_{\YY}(\AA_{\mathrm{fin}})\backslash \bfH(\AA_{\mathrm{fin}})}\mu_{-t,\mathrm{fin}}(h)\omega(g_{\mathrm{fin}},h)\phi_{\mathrm{fin}}(v\otimes w_+)dh\cdot B_{\ell,v}(g_{\infty}).
$$
In Theorem \ref{thm-quaternionicity-B_l,v} we will show that $\cD_{\ell}^{\pm} B_{\ell,v}\equiv 0$. As the above summation is absolutely convergent, Theorem \ref{thm-quaternionicity-B_l,v} implies that $D_{\ell}^{\pm}\theta(P(\cdot;\mu_{-t});\phi_{\mathrm{fin}}\otimes\phi_{\infty})\equiv 0$. Hence if $\ell>n+1$, then the condition $D_{\ell}^{\pm}\theta(P(\cdot;\mu_{-t});\phi_{\mathrm{fin}}\otimes\phi_{\infty})\equiv 0$ means that $\theta(P(\cdot;\mu_{-t});\phi_{\mathrm{fin}}\otimes\phi_{\infty})$ generates an automorphic representation which is quaternionic discrete series at infinity. It is also true that $\theta(P(\cdot;\mu_{-t});\phi_{\mathrm{fin}}\otimes\phi_{\infty})$ is square integrable (see \cite[Proposition 3.4.1]{GeRo91}) and so by \cite[Theorem 4.3]{Wallach84}, Theorem \ref{thm-quaternionicity-B_l,v} implies that $\theta(P(\cdot;\mu_{-t});\phi_{\mathrm{fin}}\otimes\phi_{\infty})$ is cuspidal. The Poincar\'e series span the space of cusp forms on $\bfH$. So combining Proposition \ref{prop-nonvanishing-theta-lift} with the argument above, we obtain a proof of Theorem \ref{thm2-intro} which is conditional on Theorem \ref{thm-quaternionicity-B_l,v}. The precise statement is as follows.
\begin{thm}
\label{prop-relation-of-Fourier-coefficients}
Let $ \ell>n+1$ and suppose $f$ is the automorphic function on $\mathbf{H}(\AA)$ corresponding to a weight $N=2\ell+2-n$ holomorphic modular form. Assume the central character $\varepsilon=\prod_{v\leq \infty} \varepsilon_v$ of $f$ satisfies $\varepsilon_{\infty}(z)=z^{n+2}$. Then there exists $\phi_{\mathrm{fin}}\in \mathcal{S}(\mathbb{X}(\AA_\fin))$ such that $\theta(\overline{f}, \phi_{\mathrm{fin}}\otimes\phi_\infty)$ is a non-zero cuspidal weight $\ell$ quaternionic modular form on $\bfG$. Moreover, the constant term $\theta(\overline{f}, \phi_{\mathrm{fin}}\otimes\phi_\infty)_{\mathbf{Z}}$ is non-zero with Fourier expansion 
\begin{equation*}
\theta(\overline{f}, \phi^\prime\otimes\phi_\infty)_{\bfZ}(g)=\sum_{\tiny T\in \mathbf{V}_0(\QQ)\colon \langle T,T\rangle>0}C_{T}\cdot a_{f}(T;\phi_{\mathrm{fin}};g_{\mathrm{fin}})\cW_{-iT}(g_{\infty}).
\end{equation*}
Here the coefficients $C_{T}\in \CC$ and 
\begin{equation}
\label{eq-Fourier-coefficient-nondegenerate}
a_{f}(T;\phi_{\mathrm{fin}};g_{\mathrm{fin}})=\int\limits_{\bfN_\mathbb{Y}(\AA_\fin)\backslash \bfH(\AA_\fin)} \omega(g_{\mathrm{fin}},h)\phi_{\mathrm{fin}}(b_1\otimes w_- +T\otimes w_+)a_{\overline{f}}(-\langle T, T\rangle)(h)dh.\end{equation}
\end{thm}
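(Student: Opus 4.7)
The plan is to reduce to the case where $\overline{f}$ is a holomorphic Poincar\'e series and then assemble the machinery of subsections \ref{subsec:archimedean-test-data}--\ref{subsection:Quaternionic-Poincare-Lifts-on-G}. Since $N = 2\ell+2-n \geq 3$, the space of weight $N$ anti-holomorphic cusp forms on $\bfH$ is spanned by the Poincar\'e series $P(\cdot; \mu_{-t})$ with $t \in \QQ_{>0}$ and $\mu_{-t, \infty}$ as in \eqref{eq-archimedean_section}, and the hypothesis $\varepsilon_\infty(z) = z^{n+2}$ ensures that $\overline{f}$ lies in this span. By linearity of the theta lift, it suffices to prove the theorem under the assumption $\overline{f} = P(\cdot; \mu_{-t})$. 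Under this reduction, Lemma \ref{lemma-convergence-theta-lift} secures absolute convergence, so that Lemma \ref{lemma-theta-lifting-of-Poincare-series} combined with Proposition \ref{prop-archimedean-integral} unfolds $\theta(\overline{f}, \phi_\fin \otimes \phi_\infty)$ into an absolutely convergent sum, indexed by $v \in \bfV(\QQ)$ with $\langle v, v\rangle = t$, whose summands factor as a non-zero constant times a finite adelic integral depending on $v$ and $g_\fin$, multiplied by $B_{\ell, v}(g_\infty) = Q_\ell(vg_\infty)/\Vert \mathrm{pr}_2(vg_\infty)\Vert^{4\ell+2}$.

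Properties (i) and (ii) of Definition \ref{definition-quaternionic-modular-form} hold for any theta lift using the archimedean test vector $\phi_\infty$, by Lemma \ref{lemma-Weil-repn-archimedean}. Verifying property (iii), namely $\cD_\ell^\pm \theta(\overline{f}, \phi_\fin \otimes \phi_\infty) \equiv 0$, is the main obstacle: the unfolding of the previous paragraph reduces this to the pointwise identities $\cD_\ell^\pm B_{\ell, v} \equiv 0$ for every $v$ with $\langle v, v\rangle > 0$, absolute convergence of the sum justifying the exchange with $\cD_\ell^\pm$. This pointwise vanishing is Theorem \ref{thm-quaternionicity-B_l,v}, which will be proved in Section \ref{section-Fourier-coefficients-of-theta-lift} by a direct but intricate computation exploiting the explicit descriptions of the Schmid operators from subsection \ref{subsection-Explicating-the-Schmid-Equations}.

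For cuspidality, I would cite \cite[Proposition 3.4.1]{GR91} to see that the theta lift of a cuspidal automorphic form is square integrable. Property (iii), together with $\ell \geq n$, forces the archimedean component of the automorphic representation generated by $\theta(\overline{f}, \phi_\fin \otimes \phi_\infty)$ to agree with the quaternionic discrete series $\Pi_\ell$ of Gross--Wallach \cite{GW96}, so Wallach's criterion \cite[Theorem 4.3]{Wallach84}---that a square integrable automorphic form with discrete series archimedean component is cuspidal---delivers cuspidality. Non-vanishing of the lift for an appropriate choice of $\phi_\fin$ is supplied by Proposition \ref{prop-nonvanishing-theta-lift}.

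Finally, the Fourier expansion is extracted by applying Corollary \ref{cor1} to the cuspidal quaternionic modular form $\theta(\overline{f}, \phi_\fin \otimes \phi_\infty)$: its constant term along $\bfZ$ admits a unique expansion in the functions $\cW_T$ indexed by $T \in \bfV_0$ with $\langle T, T\rangle > 0$. To match the coefficients, I would combine Proposition \ref{prop-Fourier-coefficient-of-theta-lift}---which expresses the $\eta_{v_0}$-Fourier coefficient along $\bfN$ as the integral \eqref{eq-Fourier-coeff-of-theta-lift}---with the identification $\eta_{v_0, \infty} = \chi_{-iv_0, \infty}$ recorded in \eqref{relation-between-eta-chi}. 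The uniqueness in Corollary \ref{cor1} then forces the coefficient of $\cW_{-iT}(g_\infty)$ to equal the finite integral \eqref{eq-Fourier-coefficient-nondegenerate} evaluated at $T$.
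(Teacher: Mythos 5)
Your proposal matches the paper's argument essentially step for step: reduction to Poincar\'e series, unfolding via Lemmas \ref{lemma-theta-lifting-of-Poincare-series} and \ref{lemma-convergence-theta-lift} together with Proposition \ref{prop-archimedean-integral}, quaternionicity deferred to Theorem \ref{thm-quaternionicity-B_l,v}, cuspidality from \cite{GR91} and \cite{Wallach84}, non-vanishing from Proposition \ref{prop-nonvanishing-theta-lift}, and the Fourier expansion from Proposition \ref{prop-Fourier-coefficient-of-theta-lift} combined with the multiplicity-at-most-one theorem; the paper's own proof is likewise conditional on Theorem \ref{thm-quaternionicity-B_l,v} and on the archimedean analysis completed in \S\ref{section-Fourier-coefficients-of-theta-lift}. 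The only slip is cosmetic: after factorizing the integral of Proposition \ref{prop-Fourier-coefficient-of-theta-lift} into archimedean and finite parts, the coefficient of $\cW_{-iT}(g_{\infty})$ is $C_T$ times the finite integral \eqref{eq-Fourier-coefficient-nondegenerate}, where $C_T$ is the proportionality constant that Theorem \ref{Theorem-Multiplicity-At-Most-1} attaches to the archimedean factor, rather than the finite integral alone.
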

\begin{remark}
We remark that Theorem~\ref{prop-relation-of-Fourier-coefficients} is compatible with known results in the theory of archimedean theta correspondences, specifically \cite[Theorem 6.2]{Li1990}. To see this, let $\widetilde{U}(1,1)$ and $\widetilde{U}(2,n)$ denote the double covers of $\mathbf{H}(\RR)$ and $\mathbf{G(\RR)}$ respectively. Similarly, write $\tilde{K}_{\infty}'$ and $\tilde{K}_{\infty}$ for the maximal compact subgroups of $\widetilde{U}(1,1)$ and $\widetilde{U}(2,n)$ covering $K_{\infty}'$ and $K_{\infty}$ respectively. With $N$ as in Theorem~\ref{prop-relation-of-Fourier-coefficients}, let $\tilde{\pi}_N$ denote the discrete series representation on $\widetilde{\U}(1,1)$ such that the minimal $\tilde{K}_{\infty}'$-type of $\tilde{\pi}_N$ has highest weight $\tilde{\tau} =\left(\dfrac{N}{2}, \dfrac{-N}{2}\right)$. \\
\indent In (loc. cite), Li shows that the theta lifting $\theta(\tilde{\pi}_N^{\vee})$ of the contragredient of $\tilde{\pi}_N$ to $\widetilde{\U}(2,n)$ is given by the discrete series representation of $\widetilde{\U}(2,n)$ whose minimal $\tilde{K}_{\infty}$-type has highest weight 
\begin{align*}
\tilde{\mu}&=\left(\dfrac{N+n-2}{2}, \dfrac{-N-n+2}{2},0,\ldots,0\right)=(\ell, -\ell, 0, \ldots, 0). 
\end{align*}
As in \S \ref{subsection-The-metaplectic-group}, a choice of character $\chi_{\infty}\colon \CC^{\times}\to \U(1)$ satisfying $\chi_{\infty}\rvert_{\RR^{\times}}=\varepsilon_{\CC/\RR}^{n+2}$ induces a splitting of $\mathbf{H}(\RR)\times \mathbf{G}(\RR)$ into the metaplectic $\CC^{\times}$-extensions $\mathrm{Mp}(\WW)(\RR)$. The analysis of K-types above implies that, via this splitting, the representation $\tilde{\pi}_N$ pulls back to the representation of $\mathbf{H}(\RR)$ whose minimal $K_{\infty}'$-type has highest weight $\left(\dfrac{N+n+2}{2}, \dfrac{-N+n+2}{2}\right)$, and $\theta(\tilde{\pi}_N^{\vee})$ pulls back to the representation $\Pi_{\ell}$ of \S \ref{subsection-quaternionic-modular-forms-on-G} (see for example \cite[\S 2.3]{MR2401812}). 
\end{remark}
\section{Algebraicity in the Fourier Expansions on $\bfG$}
\label{section-Fourier-coefficients-of-theta-lift}
In this section we sketch the proof of Theorem \ref{thm-quaternionicity-B_l,v}, showing that $B_{\ell,v}$ is annihilated by the operators $\cD_{\ell}^{\pm}$. As a byproduct of Theorems \ref{Theorem-Multiplicity-At-Most-1} and \ref{thm-quaternionicity-B_l,v}, we obtain an integral representation for the generalized Whittaker function $\cW_T$ (see \eqref{intergral-rep-of-W_v}). In Theorem \ref{thm-algebraicity}, we apply this integral representation to show that if $f$ is a modular form on $\U(1,1)$ such that all of the Fourier coefficients of $f$ are algebraic numbers, then $\phi_{\mathrm{fin}}$ may be chosen so that $\theta(\overline{f}, \phi^\prime\otimes\phi_\infty)_{\bfZ}$ is non-zero and $C_T\cdot a_{f}(T;\phi_{\mathrm{fin}};g_{\mathrm{fin}})\in \overline{\QQ}$ for all $T\in \bfV_0(\QQ)$ and $g\in \bfG(\AA_{\mathrm{fin}})$. 
\subsection{A Quaternionic Function}

For $v\in V$ with $\langle v, v\rangle >0$, recall the function $B_{\ell, v}:\mathbf{H}(\RR)\to \VV_{\ell}$ 
\begin{equation*}
    B_{\ell,v}(g):=\frac{Q_{\ell}(vg)}{\Vert\mathrm{pr}_{2}(vg)\Vert^{4\ell+2}}.
\end{equation*}
The following result is a close analogue of \cite[Theorem 3.3.1]{Pollack21b}.

\begin{thm}
\label{thm-quaternionicity-B_l,v}
Suppose that $\ell\ge 2$. Then the function $B_{\ell, v}(g)$ is quaternionic, i.e., $\cD_{\ell}^{\pm} B_{\ell, v}(g)\equiv 0$.
\end{thm}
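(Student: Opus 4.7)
The strategy is to reduce $\cD_\ell^\pm B_{\ell,v}\equiv 0$ to two polynomial identities on the auxiliary function $F(v):=Q_\ell(v)/\|\mathrm{pr}_2(v)\|^{4\ell+2}$ and then verify these by a direct product-rule computation; the argument closely parallels the orthogonal-group case \cite[Theorem~3.3.1]{Pollack21b}. As a first reduction, observe that $B_{\ell,v}(hg)=F(vhg)=B_{\ell,vh}(g)$, and that $\cD_\ell^\pm$ commutes with left translation (being built from the right-regular action of $\fg$ followed by $K_\infty$-equivariant contractions), so $\cD_\ell^\pm B_{\ell,v}(h)=\cD_\ell^\pm B_{\ell,vh}(1)$. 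It therefore suffices to prove $\cD_\ell^\pm B_{\ell,v}(1)=0$ for every $v\in V$ with $\langle v,v\rangle>0$.

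Next, expand $v=\alpha_1u_1+\alpha_2u_2+\sum_k\beta_kv_k$, so that $F$ depends only on $\alpha_1,\alpha_2,\overline{\alpha_1},\overline{\alpha_2}$. A Wirtinger-type decomposition of the $\CC$-linearly extended action of $\fg$ on $V$ gives, for $X^\pm_{j,k}=[u_j\otimes\overline{v_k}]^\pm$,
\begin{equation*}
X^+_{j,k}B_{\ell,v}(1)=\langle v,v_k\rangle\cdot \partial F/\partial\alpha_j,\qquad X^-_{j,k}B_{\ell,v}(1)=\overline{\langle v,v_k\rangle}\cdot \partial F/\partial\overline{\alpha_j}.
\end{equation*}
The projections $\pi^\pm$ from \S\ref{subsection-quaternionic-modular-forms-on-G} act on the $\overline{V_2^+}$ (resp.~$V_2^+$) tensor factor as first-order differentiations in the symmetric-power variables $u_1,u_2$. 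Since the scalars $\langle v,v_k\rangle$ vary freely with the $V_n^-$-components of $v$, the Schmid equations $\cD_\ell^\pm B_{\ell,v}(1)=0$ collapse to the polynomial identities
\begin{equation*}
\sum_{j=1}^{2}\partial_{u_j}\partial_{\alpha_j}F=0\qquad\text{and}\qquad \partial_{u_1}\partial_{\overline{\alpha_2}}F-\partial_{u_2}\partial_{\overline{\alpha_1}}F=0
\end{equation*}
in the variables $u_1,u_2\in V_2^+$ and $\alpha_j,\overline{\alpha_j}\in\CC$.

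To verify these, factor $Q_\ell=p^\ell q^\ell$ with $p=\alpha_1u_1+\alpha_2u_2$ and $q=\overline{\alpha_2}u_1-\overline{\alpha_1}u_2$, and set $D=|\alpha_1|^2+|\alpha_2|^2$, so that $F=p^\ell q^\ell/D^{2\ell+1}$. For each identity, the product rule splits the derivative of $F$ into a ``numerator'' piece and a correction from $\partial(D^{-(2\ell+1)})$. The substitutions $u_1\alpha_1+u_2\alpha_2=p$, $u_1\overline{\alpha_2}-u_2\overline{\alpha_1}=q$, and the skew-symmetry $\overline{\alpha_1}\overline{\alpha_2}-\overline{\alpha_2}\overline{\alpha_1}=0$, together with the Euler-type calculation $(u_1\partial_{u_1}+u_2\partial_{u_2})(p^{\ell-1}q^\ell)=(2\ell-1)\,p^{\ell-1}q^\ell$, cause both pieces to reduce to equal expressions of opposite sign, and thus to cancel. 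The principal obstacle is purely combinatorial — arranging the product-rule expansion so that these cancellations become transparent — and I expect this to be the most calculation-heavy step of the proof.
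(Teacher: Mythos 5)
Your proposal is correct and takes essentially the same route as the paper's proof: compute the right-regular action of the basis $[u_j\otimes \overline{v}_k]^{\pm}$ of $\fp^{\pm}$ on $B_{\ell,v}$, apply the contractions $\pi^{\pm}$ (which act as the symmetric-power differentiations $\partial_{u_i}$), and reduce $\cD_{\ell}^{\pm}B_{\ell,v}\equiv 0$ to two polynomial identities — your mixed-partial identities $\sum_j\partial_{u_j}\partial_{\alpha_j}F=0$ and $\partial_{u_1}\partial_{\overline{\alpha_2}}F-\partial_{u_2}\partial_{\overline{\alpha_1}}F=0$ are exactly the identities of Proposition \ref{prop-quaternionicity-B_l,v} after factoring out $\langle v,v_k\rangle$. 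The only blemish is an immaterial overall sign in your formulas for $X^{\pm}_{j,k}B_{\ell,v}(1)$ (the right-regular action gives $-\langle v,v_k\rangle\,\partial_{\alpha_j}F$, resp. $-\overline{\langle v,v_k\rangle}\,\partial_{\overline{\alpha_j}}F$), which does not affect the vanishing.
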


To prove Theorem~\ref{thm-quaternionicity-B_l,v}, we begin with the following general formula describing the action of $\fg$ on $B_{\ell, v}$. Given $X\in \fg_0$, which may be viewed as an endomorphism of $V$, we write $X(v)$ to denote the application of the endomorphism $X$ to the vector $v$. So
\begin{equation*}
    X(v)=\frac{d}{dt}\left(v\cdot \exp(tX)\right)\rvert_{t=0}.
\end{equation*}
Given $v\in V$, consider the function $z:G\to V$ defined by $g\mapsto v\cdot g$. Then $X\in \fg_0$ acts on $z$ under the right regular action $X\cdot z= -X(z)$. This formula remains valid for $X\in \fg$. We now give a preliminary lemma, which is a version of \cite[Lemma 3.3.2]{Pollack21b}. The proof is a direct computation. 

\begin{lemma}
\label{lemma-quaternionicity-B_l,v-1}
Let $g\in G$ and $X=X_1+iX_2\in \fg$ with $X_1, X_2\in \fg_0$. We define $X^{\ast}=X_1-iX_2$. Set $z=v\cdot g$, $p=\mathrm{pr}_2(z)$ and $\overline{p}=\delta(\mathrm{pr}_2(\overline{z}))$. Similarly we set $X(p)=\mathrm{pr}_2(X(z))$ and $X(\overline{p})=\delta(\mathrm{pr}_2(X(\overline{z})))$
so that $X\cdot p=-X(p)$ and $X\cdot \overline{p}=-X(\overline{p})$. 
Then 
\begin{equation*}
X\cdot B_{\ell,v}(g) 
= \frac{p^{\ell-1} \overline{p}^{\ell-1}\left( (4\ell+2)((X(p), p)+(p, X^{\ast}(p)))p\overline{p}-2\ell(X(p)\overline{p}+X(\overline{p})p)\|p\|^{2} \right)}{2\|p\|^{4\ell+4}}.
\end{equation*}
\end{lemma}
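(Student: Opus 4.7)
The plan is a routine Leibniz rule computation, with the only subtlety being the bookkeeping of signs when the complex structure on $\fg$ interacts with the sesquilinear form $(\cdot,\cdot)$. First, I would observe that under the identification $\overline{V_2^+} \simeq V_2^+$ supplied by $\delta$, the $K_\infty$-equivariant polynomial $Q_\ell$ factors as $Q_\ell(z) = p^\ell\,\overline{p}^\ell$ (where $p$ and $\overline{p}$ are regarded as commuting elements of $\Sym V_2^+$), so that
$$B_{\ell,v}(g) = p^\ell\,\overline{p}^\ell\,\|p\|^{-(4\ell+2)}.$$
The product rule then decomposes $X\cdot B_{\ell,v}(g)$ into three summands corresponding to differentiating each of the three factors.

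The first two summands follow from the chain rule combined with the identities $X\cdot p = -X(p)$ and $X\cdot\overline{p} = -X(\overline{p})$ supplied in the statement; they evaluate to $-\ell\,p^{\ell-1}X(p)\,\overline{p}^\ell\,\|p\|^{-(4\ell+2)}$ and $-\ell\,p^\ell\,\overline{p}^{\ell-1}X(\overline{p})\,\|p\|^{-(4\ell+2)}$ respectively. The third summand requires computing $X\cdot \|p\|^{-(4\ell+2)}$, which by the chain rule reduces to computing $X\cdot(p,p)$. Applying the Leibniz rule first for the real $X_1,X_2\in\fg_0$ and then extending $\CC$-linearly, while using that $(\cdot,\cdot)$ is $\CC$-linear in the first slot and $\CC$-conjugate-linear in the second, yields
$$X\cdot(p,p) \;=\; -(X(p),p)\;-\;(p, X^{\ast}(p)).$$
This is the key identity: the imaginary unit $i$ in $X=X_1+iX_2$ must be absorbed into the sesquilinear form, and as it passes through the conjugate-linear second slot it is replaced by $-i$, converting $iX_2$ into $-iX_2$ in the second argument and so producing $X^{\ast}=X_1-iX_2$ rather than $X$. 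It follows that $X\cdot\|p\|^{-(4\ell+2)} = (2\ell+1)\|p\|^{-(4\ell+4)}\bigl[(X(p),p)+(p,X^{\ast}(p))\bigr]$.

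Assembling the three summands and factoring out the common prefactor $p^{\ell-1}\overline{p}^{\ell-1}/(2\|p\|^{4\ell+4})$ then yields the stated formula, with the coefficients $(4\ell+2)$ and $-2\ell$ arising from combining the factor $2$ from the denominator with the $(2\ell+1)$ and $-\ell$ produced by the chain rule above. There is no substantive obstacle: the entire proof is sign-tracking, and the only place where care is genuinely required is the derivation of $X\cdot(p,p) = -(X(p),p)-(p,X^{\ast}(p))$, where the sesquilinearity of the hermitian form interacts non-trivially with the complexification $\fg=\fg_0\otimes_{\RR}\CC$.
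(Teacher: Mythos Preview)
Your proposal is correct and is precisely the direct computation the paper alludes to (the paper omits the proof entirely, citing it as a version of \cite[Lemma 3.3.2]{Pollack21b}). Your identification of the one genuinely delicate point---that sesquilinearity in the second slot of $(\cdot,\cdot)$ converts $X$ to $X^{\ast}$ when differentiating $\|p\|^2$---is exactly right, and the remaining bookkeeping is accurate.
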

Recall that the set $\{[u_i\otimes \overline{v}_j]^{\pm} \colon 1\le i\le 2, 1\le j\le  n\}$, defined in \eqref{eq-mathfrakp-basis}, is a basis for $\fp^{\pm}$. To simplify notation, we denote $X_{ij}^{\pm}=[u_i\otimes v_j]^{\pm}$.
Then for $z\in V$, we have
$$
X^{\bullet}_{ij}(z)=
\begin{cases}
\langle z, v_j\rangle u_i &, \hbox{if $\bullet=+$}, \\ 
-\langle z, u_i\rangle v_j &, \hbox{if $\bullet =-$,}
\end{cases}\quad \hbox{and} \quad 
X^{\bullet}_{ij}(\overline{z})=
\begin{cases}
-\langle u_i, z\rangle \overline{v}_j, &\hbox{if $\bullet=+$,} \\ 
\langle v_j, z\rangle \overline{u}_i, & \hbox{if $\bullet=-$.}
\end{cases}
$$  
Moreover, by Lemma~\ref{lemma-quaternionicity-B_l,v-1}, we have
$$
X_{ij}^+\cdot B_{\ell,v}(g)=
\frac{p^{\ell-1}\overline{p}^{\ell-1}((4\ell+2)(X_{\gamma_{ij}}^+(p), p)p\overline{p}-2\ell X_{ij}^+(p)\overline{p}\|p\|^2)}{2\|p\|^{4\ell+4}}
$$ 
and 
$$
X_{ij}^-\cdot B_{\ell,v}(g)=
\frac{p^{\ell-1}\overline{p}^{\ell-1}((4\ell+2)(p, X_{\gamma_{ij}}^+(p))p\overline{p}-2\ell X_{ij}^-(\overline{p})p\|p\|^2)}{2\|p\|^{4\ell+4}}. 
$$
For each pair $(i, j)$ with $1\le i\le 2, 1\le j\le  n$, we have $K_\infty$-equivariant contractions
\begin{equation*}
\begin{split}
    \langle \cdot , X_{ij}^-\rangle \colon \Sym^2 V_2^+ \otimes \det{}_{\U(V_2^+)}^{-1} &\to (V_2^+\otimes \det{}_{\U(V_2^+)}^{-1})\boxtimes V_n^-\\
     f(u_1,u_2) &\mapsto \partial_{u_i}f(u_1,u_2)\boxtimes v_j  \\
\end{split}
\end{equation*}
and 
\begin{equation*}
\begin{split}
    \langle \cdot , X_{ij}^+\rangle \colon \Sym^2 V_2^+ \otimes \det{}_{\U(V_2^+)}^{-1} &\to (V_2^+\otimes \det{}_{\U(V_2^+)}^{-1})\boxtimes \overline{V_n^-}\\
     f(u_1,u_2) &\mapsto (-1)^i \partial_{u_{i+1}}f(u_1,u_2)\boxtimes \overline{v}_j .
\end{split}
\end{equation*}
Here the index $i$ is interpreted modulo 2. Then
$X_{ij}^{+}\cdot B_{\ell, v}(g)\otimes X_{ij}^{-}$ contracts to
\begin{equation*}
\frac{\ell\cdot p^{\ell-2}\overline{p}^{\ell-1}}{2\|z\|^{4\ell+4}}\left((4\ell+2) p(X_{ij}^+(p), p)\langle p\overline{p}, X_{ij}^-\rangle -2(\ell-1)\|p\|^2X_{ij}^+(p)\langle p\overline{p}, X_{ij}^-\rangle -2\|p\|^2p\langle X_{ij}^+(p)\overline{p}, X_{ij}^-\rangle\right), 
\end{equation*}
and the element $X_{ij}^{-}\cdot B_{\ell, v}(g)\otimes X_{ij}^{+}$ contracts to
$$
\frac{\ell\cdot p^{\ell-1}\overline{p}^{\ell-2}}{2\|p\|^{4\ell+4}}\left( (4\ell+2) \overline{p}(p,X_{ij}^+(p))\langle p\overline{p}, X_{ij}^+\rangle -2(\ell-1)\|p\|^2(X_{ij}^-(\overline{p})\langle p\overline{p}, X_{ij}^+\rangle -2\|p\|^2\overline{p}\langle X_{ij}^-(\overline{p})p, X_{ij}^+\rangle ) \right). 
$$
Thus Theorem~\ref{thm-quaternionicity-B_l,v} follows from Proposition \ref{prop-quaternionicity-B_l,v} below, whose proof is directly analogous to the proof of \cite[Proposition 3.3.3]{Pollack21b}. 
\begin{prop}
\label{prop-quaternionicity-B_l,v}
Let the notation be as above. For any $1\le j \le n$, we have
\begin{equation}
\label{eq-prop-quaternionicity-B_l,v-1}
\sum_{i=1}^2\left((4\ell+2) p(X_{ij}^+(p), p)\langle p\overline{p}, X_{ij}^-\rangle -2(\ell-1)\|p\|^2X_{ij}^+(p)\langle p\overline{p}, X_{ij}^-\rangle -2\|p\|^2p\langle X_{ij}^+(p)\overline{p}, X_{ij}^-\rangle\right)=0
\end{equation}
and
\begin{equation}
\label{eq-prop-quaternionicity-B_l,v-2}
\sum_{i=1}^2 \left( (4\ell+2) \overline{p}(p,X_{ij}^+(p))\langle p\overline{p}, X_{ij}^+\rangle -2(\ell-1)\|p\|^2(X_{ij}^-(\overline{p})\langle p\overline{p}, X_{ij}^+\rangle -2\|p\|^2\overline{p}\langle X_{ij}^-(\overline{p})p, X_{ij}^+\rangle ) \right)=0.
\end{equation}
\end{prop}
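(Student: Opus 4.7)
The plan is to reduce Proposition \ref{prop-quaternionicity-B_l,v} to a short computation in $\Sym^3 V_2^+\otimes \det_{\U(V_2^+)}^{-1}$. The starting observation is that $X_{ij}^+(p)=\alpha_j u_i$ with $\alpha_j=\langle z,v_j\rangle$ and $X_{ij}^-(\overline p)=\beta_j\delta(\overline u_i)$ with $\beta_j=\langle v_j,z\rangle$, so the $v_j$ (resp. $\overline{v}_j$) dependence in each summand of \eqref{eq-prop-quaternionicity-B_l,v-1} (resp. \eqref{eq-prop-quaternionicity-B_l,v-2}) is uniform across the index $i$. After extracting the common scalar tensor factor, it suffices to prove both identities as equalities in $\Sym^3 V_2^+\otimes \det_{\U(V_2^+)}^{-1}$ for each fixed $j$.

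I would next fix coordinates $p=p_1u_1+p_2u_2$ and use $\delta\colon \overline{V_2^+}\xrightarrow{\sim} V_2^+\otimes\det_{\U(V_2^+)}^{-1}$, $\overline u_1\mapsto -u_2$, $\overline u_2\mapsto u_1$, to obtain $\overline p=\overline{p_2}\,u_1-\overline{p_1}\,u_2$ and $\|p\|^2=p_1\overline{p_1}+p_2\overline{p_2}$. A straightforward expansion of $p\overline p\in\Sym^2 V_2^+\otimes \det_{\U(V_2^+)}^{-1}$ then yields three elementary Euler-type identities:
\[
\sum_{i=1}^{2}\overline{p_i}\,\partial_{u_i}(p\overline p)\;=\;\|p\|^{2}\,\overline p,\qquad \sum_{i=1}^{2}u_i\,\partial_{u_i}(p\overline p)\;=\;2\,p\overline p,\qquad \sum_{i=1}^{2}\partial_{u_i}(u_i\overline p)\;=\;3\,\overline p.
\]
The first is obtained by direct substitution, the second is the standard Euler relation for a homogeneous polynomial of degree $2$, and the third is immediate from $u_i\overline p\in\Sym^2 V_2^+\otimes\det_{\U(V_2^+)}^{-1}$ and the formulas for $\overline p$.

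Substituting these three identities into the left-hand side of \eqref{eq-prop-quaternionicity-B_l,v-1}, after pulling out the common factor $\alpha_j\otimes v_j$ provided by $X_{ij}^+(p)$ and $\langle\cdot,X_{ij}^-\rangle$, produces
\[
\alpha_j\otimes v_j\otimes \|p\|^{2}\,p\,\overline p\,\Bigl[(4\ell+2)-4(\ell-1)-6\Bigr]\;=\;0,
\]
as the bracketed scalar vanishes. For \eqref{eq-prop-quaternionicity-B_l,v-2} the same strategy applies with the roles of $p$ and $\overline p$ interchanged; the contraction is now $\langle\cdot,X_{ij}^+\rangle=(-1)^i\partial_{u_{i+1}}(\cdot)\boxtimes\overline{v_j}$, and the three relevant Euler-type identities (obtained by formally conjugating the previous ones) produce the identical scalar $(4\ell+2)-4(\ell-1)-6=0$.

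The main obstacle is bookkeeping: the twist by $\det_{\U(V_2^+)}^{-1}$ together with the map $\delta$ introduces signs that must be tracked consistently through every partial derivative, and the conjugate version of the third identity for \eqref{eq-prop-quaternionicity-B_l,v-2} requires particular care. Once a coordinate convention is fixed, however, the remaining calculations are entirely routine; this is the same scheme employed in \cite[Proposition 3.3.3]{Pollack21b} in the orthogonal setting, which serves as a convenient template.
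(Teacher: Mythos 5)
Your proposal is correct: I checked the three Euler-type identities (e.g.\ $\sum_i \overline{p_i}\,\partial_{u_i}(p\overline{p})=\|p\|^2\overline{p}$ and their conjugate analogues, where the signs from $\delta$ and the $(-1)^i$ in the contraction $\langle\cdot,X_{ij}^+\rangle$ cancel as you anticipate) and both sums indeed collapse to $\bigl[(4\ell+2)-4(\ell-1)-6\bigr]\|p\|^2 p\overline{p}=0$. This is essentially the argument the paper intends, since it leaves the details to the reader by direct analogy with \cite[Proposition 3.3.3]{Pollack21b}, which is exactly the coordinate computation you carry out.
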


\subsection{The Fourier Transform of $A_\ell$}

Let $v_0\in V_0$ be such that $\langle v_0, v_0\rangle>0$. In this subsection, we study a Fourier transform of the function 
$$
A_{\ell}(v_0)=\frac{Q_{\ell}(v_0)}{\Vert\mathrm{pr}_{2}(v_0)\Vert^{4\ell+2}}=\frac{P_K(\mathrm{pr}_2(v_0)^{\ell}\otimes\mathrm{pr}_2(\overline{v_0})^{\ell}}{\Vert\mathrm{pr}_{2}(v_0)\Vert^{4\ell+2}}.
$$
Recall the family of constants $\{C_T\colon T\in \bfV_0(\QQ), \langle T,T\rangle >0\}$ introduced in Theorem \ref{prop-relation-of-Fourier-coefficients}.  Our results in this section will be used to show that if $T_1,T_2\in \mathbf{V}_0(\QQ)$ are such that $\langle T_1, T_1\rangle>0$ and $\langle T_2, T_2\rangle>0$, then $C_{T_1}=C_{T_2}$. Recall the character $\eta_{v_0,\infty}$ from \eqref{defn-of-eta}. The Fourier transform in question is defined as \begin{equation}\label{eq-Fourier-integral-A_l}
\cF_{v_0}A_{\ell}(g):=\int_{\Stab_{N}(v_0)\backslash N}A_{\ell}(v_0\cdot ng)\overline{\eta_{v_0, \infty}(n)}dn.
\end{equation}
The right cosets of $\Stab_N(v_0)$ in $N$ are represented by the elements of the one parameter subgroup $\CC\to N$,  $z\mapsto \exp(b_1\otimes \overline{zv_0}-zv_0\otimes \overline{b_1})$. We have that $v_0\cdot \exp(b_1\otimes \overline{zv_0}-zv_0\otimes \overline{b_1})=v_0-\langle v_0,zv_0\rangle b_1$. It follows that $\cF_{v_0}A_{\ell}(1)$ is a non-zero constant multiple of the integral
$$
J(v_0, \ell):=\int_{\CC}A_{\ell}(zb_1+v_0)\overline{\psi_{E,\infty}(z)}dz,
$$
where $dz$ denotes the double Lebesgue measure on $\CC$ and $\psi_{E,\infty}(z)=\psi_{\infty}(\frac{1}{2}\tr(z))$. Note that the quantity $A_{\ell}(zb_1+v_0)$ is insensitive to replacing $v_0$ with its projection onto the positive definite complex two plane $V_2^+$. Moreover, since $\langle v_0,v_0\rangle>0$, there exists a unique value $a\in \CC^{\times}$ such that the projection of $v_0$ onto $V_2^+$ is given by $\mathrm{pr}_2(v_0)=\overline{a}u_2$. Hence, rewriting $J(\overline{a}u_2,\ell)$ using the change of variable $z\mapsto \overline{a}z$, and applying the homogeneity properties of $A_{\ell}$, one obtains
\begin{equation}
\label{eq-Fourier-transform-Jprime}
J(\overline{a}u_2, \ell) =\frac{1}{|a|^{2\ell}}\int_{\CC}A_{\ell}(zb_1+u_2)\overline{\psi_{E,\infty}(\overline{a}z)}dz.
\end{equation}
The above expression is not identically $0$ as a function of $\overline{a}$ since it is a Fourier transform integral. Hence there exists $a'\in \CC$ such that $J(a'u_2, \ell)\neq 0$.
\begin{lemma}
\label{Lemma-Abs-Convergence-J-integral}
Suppose $\ell\ge 1$, $a\in \CC^{\times}$, and $g\in G$. Then the integral
\begin{equation}
\label{eq-Fourier-transform-A_l}
    \int_{\CC}A_{\ell}((zb_1+u_2)g)\overline{\psi_{E,\infty}(\overline{a} z)}dz
\end{equation}
converges absolutely.
\end{lemma}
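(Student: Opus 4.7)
The plan hinges on the simple observation that the vector $v=(zb_1+u_2)g$ always has $\langle v,v\rangle = 1$, so its $V_2^+$-projection can never vanish. Indeed, since $\langle b_1,b_1\rangle = 0$, $\langle b_1,u_2\rangle = 0$ (using $b_1=\frac{1}{\sqrt{2}}(u_1+v_n)$), and $\langle u_2,u_2\rangle = 1$, expanding $\langle zb_1+u_2,zb_1+u_2\rangle$ gives $1$, and this is preserved by $g\in G$. Combining this with the identity
\begin{equation*}
\|\mathrm{pr}_2(w)\|^2 = \tfrac{1}{2}\bigl(\|w\|^2 + \langle w,w\rangle\bigr), \qquad w\in V,
\end{equation*}
which follows from the orthogonal decomposition $V=V_2^+\oplus V_n^-$ with $V_2^+$ positive definite and $V_n^-$ negative definite, yields the uniform lower bound $\|\mathrm{pr}_2(v)\|^2 \ge \tfrac{1}{2}$. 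In particular, the integrand is smooth on all of $\CC$ and has no singularities.

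Next I will control the numerator. Since $P_K$ is $K_\infty$-equivariant, and $\|p^\ell\otimes\overline{p}^\ell\|\le \|p\|^{2\ell}$ with respect to any $K_\infty$-invariant norm, there is a constant $C$ such that $\|Q_\ell(v)\|\le C\|\mathrm{pr}_2(v)\|^{2\ell}$. Therefore
\begin{equation*}
\bigl\|A_\ell\bigl((zb_1+u_2)g\bigr)\bigr\| \;\le\; C\,\|\mathrm{pr}_2(v)\|^{-2\ell-2}.
\end{equation*}

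To obtain integrability at infinity, I will expand $\|v\|^2=\langle v,v\iota\rangle$ in $z$. Writing $v=z(b_1g)+u_2g$, the leading coefficient as $|z|\to\infty$ is $|z|^2\langle b_1g,(b_1g)\iota\rangle = |z|^2\|b_1g\|^2 > 0$, because $b_1g\ne 0$ and the form $(\cdot,\cdot)$ is positive definite. Consequently $\|\mathrm{pr}_2(v)\|^2 = \tfrac{1}{2}(\|v\|^2+1)$ grows quadratically in $|z|$, and the integrand decays like $|z|^{-2\ell-2}$, which is integrable on $\{|z|\ge R\}\subset\CC$ for any $\ell\ge 1$ since $\int_R^\infty r^{-2\ell-1}\,dr<\infty$.

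Combining the boundedness of $\|A_\ell((zb_1+u_2)g)\|$ on compact sets (from the uniform lower bound on $\|\mathrm{pr}_2(v)\|$) with the polynomial decay at infinity, and using that the character $\overline{\psi_{E,\infty}(\overline{a}z)}$ has absolute value $1$, the integrand is absolutely integrable on $\CC$. The only real subtlety is verifying the $K_\infty$-equivariant norm estimate for $P_K$; once that bookkeeping is in place, everything else reduces to the elementary geometric identity above. The main obstacle is psychological: one must notice that the affine map $z\mapsto\mathrm{pr}_2((zb_1+u_2)g)$ never vanishes, which is not at all apparent from looking at $\mathrm{pr}_2(b_1g)$ and $\mathrm{pr}_2(u_2g)$ separately but is forced by the isotropy of $b_1$ and its orthogonality to $u_2$.
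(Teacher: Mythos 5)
Your proof is correct and follows essentially the same route as the paper: bound $\|A_\ell(v)\|$ by a constant times $\|\mathrm{pr}_2(v)\|^{-2\ell-2}$ and show that $\|\mathrm{pr}_2((zb_1+u_2)g)\|^2$ grows quadratically in $|z|$, so the integrand is comparable to $(1+|z|^2)^{-\ell-1}$, which is integrable on $\CC$ for $\ell\ge 1$. The only difference is that the paper computes $\|\mathrm{pr}_2(zb_1+u_2)\|^2=\tfrac{|z|^2}{2}+1$ explicitly for $g=1$ and then asserts the general case, whereas your identity $\|\mathrm{pr}_2(w)\|^2=\tfrac12\bigl(\|w\|^2+\langle w,w\rangle\bigr)$ combined with the $G$-invariance of $\langle zb_1+u_2,zb_1+u_2\rangle=1$ treats arbitrary $g$ directly, which if anything supplies the justification for the paper's terse reduction to $g=1$.
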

\begin{proof}
To prove the absolute convergence of \eqref{eq-Fourier-transform-A_l}, it suffices to consider the case $g=1$.
Since $\|Q_{\ell}(v)\|=\|P_K(\mathrm{pr}_2(v)^{\ell}\otimes\mathrm{pr}_2(\overline{v})^{\ell})\|=\|\mathrm{pr}_2(v)\|^{2\ell}$, we have 
 $$
 \|A_{\ell}((zb_1+u_2)g)\|=\dfrac{1}{\|\mathrm{pr}_2((zb_1+u_2)g)\|^{2\ell+2}}.
 $$
Taking $g=1$ we have that 
\begin{align*}
    \|\mathrm{pr}_2(zb_1+u_2)\|
    =\left\|\sum_{i=1}^2\langle zb_1+u_2,u_i\rangle u_i\right\| 
    =\sqrt{\frac{|z|^2}{2}+1}. 
\end{align*}
Hence 
\begin{align*}
\int_{\CC}\left\|A_{\ell}((zb_1+u_2)) \overline{\psi_{E,\infty}(\overline{a}z)}\right\|dz
    &=
    \int_{\CC}\dfrac{1}{(|z|^2/2+1)^{\ell+1}}dz.
\end{align*}
This proves the absolute convergence of \eqref{eq-Fourier-transform-A_l} for $g=1$, and hence for general $g$.
\end{proof}
Lemma \ref{Lemma-Abs-Convergence-J-integral} implies that \eqref{eq-Fourier-integral-A_l} converges absolutely. Hence by Theorem \ref{thm-quaternionicity-B_l,v}, the function defined by \eqref{eq-Fourier-integral-A_l} is annihilated by the differential operators $D_{\ell}^{+}$ and $D_{\ell}^-$. Moreover, the integral \eqref{eq-Fourier-integral-A_l} is of moderate  growth and satisfies the same equivariance properties as the $\eta_{v_0, \infty}$-th generalized Whittaker function $\cW_{-iv_0}$ (see subsection~\ref{subsection-Generalized-Whittaker-Coefficients}). Hence by the multiplicity at most one statement in Theorem~\ref{Theorem-Multiplicity-At-Most-1}, there exists a constant $C_{v_0, \ell}\in \CC$ (see \eqref{relation-between-eta-chi})  such that  
\begin{equation}
\label{intergral-rep-of-W_v}
\int_{\CC}A_{\ell}((zb_1+v_0)g)\overline{\psi_{E,\infty}(z)}dz=
C_{v_0, \ell}\cdot \cW_{-iv_0}(g).
\end{equation}
Moreover, by the discussion preceding Lemma \ref{Lemma-Abs-Convergence-J-integral}, we may fix a choice of the vector $a'\in \CC$ so that $C_{a'u_2,\ell}\neq 0$. In fact, we have the following result.
\begin{lemma}
\label{main-technical-lemma}
    The constant $C_{u_2,\ell}\neq 0$, and for all $v_0\in V$ such that $\langle v_0,v_0\rangle>0$,
    \begin{equation}
\label{eqn-fourier-integral-A_ell}
\int_{\CC}A_{\ell}((zb_1+v_0)g)\overline{\psi_{E,\infty}(z)}dz=C_{u_2, \ell} \mathcal{W}_{-iv_0}(g).
\end{equation}
\end{lemma}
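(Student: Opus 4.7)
The plan is to establish the lemma in two stages. Set $F_{v_0}(g):=\int_{\CC}A_{\ell}((zb_1+v_0)g)\overline{\psi_{E,\infty}(z)}dz$. By the discussion preceding the lemma we already know $F_{v_0}=C_{v_0,\ell}\cdot\cW_{-iv_0}$ for some scalar $C_{v_0,\ell}\in\CC$. Stage one shows that the scalar $C_{v_0,\ell}$ is in fact independent of $v_0$ in the positive cone; stage two uses the nonvanishing of some $J(a'u_2,\ell)$ already recorded (see the paragraph before Lemma \ref{Lemma-Abs-Convergence-J-integral}) to deduce $C_{u_2,\ell}\neq 0$.

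The first step is to compare how $F_{v_0}$ and $\cW_{-iv_0}$ transform under left translation by $m_0=(h_0,z_0)\in M$. Using $b_1\cdot m_0=z_0^{-1}b_1$ and $v_0\cdot m_0=v_0\cdot h_0$ (the $M$-action on $V$), together with the change of variable $u=wz_0^{-1}$ and the homogeneity $A_{\ell}(\lambda v)=|\lambda|^{-2\ell-2}A_{\ell}(v)$ (a direct consequence of the definition $A_\ell(v)=P_K(\mathrm{pr}_2(v)^\ell\otimes\mathrm{pr}_2(\overline v)^\ell)/\|\mathrm{pr}_2(v)\|^{4\ell+2}$), a straightforward calculation gives
\begin{equation*}
F_{v_0}(m_0 g)=|z_0|^{2\ell+2}\,F_{z_0 v_0 h_0}(g).
\end{equation*}
On the other hand, since $\beta_{z_0 T h_0}(h,z)=\beta_T(h_0 h,z_0 z)$ as is immediate from the definition of $\beta_T$, the explicit formula \eqref{Main-Formula-Mult-One} yields
\begin{equation*}
\cW_T(m_0 g)=|z_0|^{2\ell+2}\,\cW_{z_0 T h_0}(g)
\end{equation*}
for every $g\in M$. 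Both sides, viewed as functions of $g\in G$, belong to $\cC_{N,z_0Th_0}^{\mathrm{md}}(G,\VV_{\ell})_{K_{\infty}}^{\cD_{\ell}=0}$ (using \eqref{M-character-action} for the Whittaker character), and this space is one-dimensional by Theorem \ref{Theorem-Multiplicity-At-Most-1}, so the identity propagates to all of $G$.

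Combining the two transformation laws with $F_{v_0}=C_{v_0,\ell}\cW_{-iv_0}$ and $F_{z_0 v_0 h_0}=C_{z_0 v_0 h_0,\ell}\cW_{-iz_0 v_0 h_0}$ forces $C_{v_0,\ell}=C_{z_0 v_0 h_0,\ell}$ for every $m_0=(h_0,z_0)\in M$. Because $\U(V_0)$ acts transitively on vectors of any fixed positive norm in $V_0$ and $\CC^{\times}$ rescales the norm, the $M$-orbit of $u_2$ under the map $(h_0,z_0)\mapsto z_0 u_2 h_0$ exhausts the positive cone $\{v_0\in V_0:\langle v_0,v_0\rangle>0\}$. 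Hence $C_{v_0,\ell}=C_{u_2,\ell}$ for every such $v_0$, which proves \eqref{eqn-fourier-integral-A_ell}. Finally, the paragraph preceding Lemma \ref{Lemma-Abs-Convergence-J-integral} furnishes an $a'\in\CC^{\times}$ with $F_{a'u_2}(1)=J(a'u_2,\ell)\neq 0$; since $F_{a'u_2}=C_{u_2,\ell}\cW_{-ia'u_2}$ by the first step and $\cW_{-ia'u_2}\not\equiv 0$, we conclude $C_{u_2,\ell}\neq 0$.

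The main obstacle I foresee is bookkeeping in the two equivariance calculations (tracking the interaction between the hermitian conjugate-linearity of $\langle\,\cdot\,,\,\cdot\,\rangle$, the $\CC^\times$-factor $z_0$, and the homogeneity weight $-2\ell-2$ of $A_\ell$). Once the identity $F_{v_0}(m_0g)=|z_0|^{2\ell+2}F_{z_0v_0h_0}(g)$ is verified by direct substitution, the remaining step for $\cW_T$ reduces to the identity $\beta_{z_0Th_0}(h,z)=\beta_T(h_0h,z_0z)$ and an appeal to multiplicity-one, and the non-vanishing conclusion is then immediate.
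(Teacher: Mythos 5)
Your argument is correct, and it reaches the same conclusion by a mildly different organization of the same basic mechanism (multiplicity one from Theorem \ref{Theorem-Multiplicity-At-Most-1}, the explicit formula \eqref{Main-Formula-Mult-One}, and the previously recorded nonvanishing of $J(a'u_2,\ell)$). The paper only uses left translation by the torus element $\diag(a,I_n,\overline{a}^{-1})$: it compares the resulting expression for $J(au_2,\ell)$ with \eqref{eq-Fourier-transform-Jprime}, evaluates $\cW_{-iu_2}$ and $\cW_{-iau_2}$ directly on $M$ via \eqref{Main-Formula-Mult-One} to get $C_{au_2,\ell}=C_{u_2,\ell}$, and then handles a general $v_0$ by evaluating at $g=1$ and using that $A_{\ell}(zb_1+v_0)$ only sees $\mathrm{pr}_2(v_0)=\overline{a}u_2$. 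You instead prove the full left-$M$ covariance $F_{v_0}(m_0g)=|z_0|^{2\ell+2}F_{z_0v_0h_0}(g)$ by a change of variables (your homogeneity exponent $|\lambda|^{-2\ell-2}$ and the factor $|z_0|^{2\ell+2}$ both check out), match it against $\cW_T(m_0g)=|z_0|^{2\ell+2}\cW_{z_0Th_0}(g)$, and then use transitivity of $\U(V_0)\times\CC^{\times}$ on the positive cone in place of the projection trick. Two remarks: first, your $\cW$-covariance is initially only an identity of restrictions to $M$, so the propagation step via \eqref{M-character-action} and one-dimensionality of $\cC_{N,z_0Th_0}^{\mathrm{md}}(G,\VV_{\ell})_{K_{\infty}}^{\cD_{\ell}=0}$ (together with nonvanishing at $g=1$, which holds since $\langle u_2,z_0Th_0\rangle\neq 0$ for a positive-norm parameter) is genuinely needed, and you correctly supply it, whereas the paper avoids it by only ever evaluating on $M$. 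Second, your transitivity argument produces the identity for $v_0$ in the positive cone of $V_0$; this matches the scope in which $\cW_{-iv_0}$ and the constant $C_{v_0,\ell}$ of \eqref{intergral-rep-of-W_v} are actually defined (the lemma's ``$v_0\in V$'' must be read through $\mathrm{pr}_2$, exactly as in the paper's last sentence), so this is not a gap, though if you want the statement as literally phrased you would add the paper's observation that the left side is unchanged upon replacing $v_0$ by its relevant projection. The payoff of your route is that it bypasses the explicit recomputation of $J(au_2,\ell)$ and treats the whole Levi at once; the paper's route is more computational but needs nothing beyond values on $M$.
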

\begin{proof}
Let $a\in \CC^{\times}$ and set $M=\diag(a, I_{n}, \overline{a}^{-1})\in G$.  For ease of notation, denote $n(z)=\exp(b_1\otimes \overline{zu_2}-zu_2\otimes \overline{b_1})\in G$ for $z\in \CC$, so that $u_2\cdot n(z)=u_2-\overline{z}b_1$. Then
\begin{equation*}
\begin{split}
\int_{\CC}A_{\ell}((zb_1+u_2)M)\overline{\psi_{E,\infty}(z)}dz
&=\int_{\CC}A_{\ell}(u_2 \cdot MM^{-1}n(-\overline{z}) M)\overline{\psi_{E,\infty}(z)}dz \\
&= |a|^2 \int_{\CC}A_{\ell}(u_2 \cdot n( -\overline{z}) )\overline{\psi_{E,\infty}(az)}dz.
\end{split}
\end{equation*}
Combining this with \eqref{eq-Fourier-transform-Jprime}, we obtain 
\begin{equation*}
\begin{split}
J(au_2, \ell) &= \frac{1}{|a|^{2\ell+2}} \int_{\CC}A_{\ell}((zb_1+u_2)M)\overline{\psi_{E,\infty}(z)}dz.
\end{split}
\end{equation*}
Therefore, the above manipulation, together with \eqref{intergral-rep-of-W_v} gives $J(au_2, \ell)=|a|^{-2\ell-2}C_{u_2,\ell}\cW_{-iu_2}(M)$. Moreover, by definition of $J(au_2,\ell)$, \eqref{intergral-rep-of-W_v} implies $J(au_2,\ell)=C_{au_2, \ell}\mathcal{W}_{-iau_2}(1)$. So
\begin{equation}
    \label{equation-for-constants}
C_{au_2, \ell}\mathcal{W}_{-iau_2}(1)=|a|^{-2\ell-2}C_{u_2,\ell}\cW_{-iu_2}(M).
\end{equation}
Applying Theorem~\ref{Theorem-Multiplicity-At-Most-1} we calculate
\begin{equation*}
\begin{split}
\mathcal{W}_{-iu_2}(M)
=\sum_{-\ell\leq v\leq \ell}|a|^{2\ell+2}\left(\frac{|\beta_{-iau_2}(1,1)|}{\beta_{-iau_2}(1,1)}\right)^vK_v\left(|\beta_{-iau_2}(1,1)|\right) = |a|^{2\ell+2}\mathcal{W}_{-iau_2}(1). 
\end{split}
\end{equation*}
Hence, $\cW_{-iau_2}(1)=|a|^{-2\ell-2}\cW_{-iu_2}(M)$, and together with \eqref{equation-for-constants}, this implies that $C_{au_2,\ell}=C_{u_2,\ell}$. In particular, $C_{au_2,\ell}=C_{a'u_2,\ell}$ for all $a\in \CC^{\times}$, and thus $C_{u_2,\ell}$ is non-zero. Since $A_{\ell}(zb_1+v_0)$ is insensitive to replacing $v_0$ by $\mathrm{pr}_2(v_0)$, the equality \eqref{eqn-fourier-integral-A_ell} follows from \eqref{intergral-rep-of-W_v}.
\end{proof} 
\begin{prop}
\label{prop-Fourier-transform-A_l}
With notation as in Theorem \ref{prop-relation-of-Fourier-coefficients}, $C_T$ is non-zero and independent of $T$. 
\end{prop}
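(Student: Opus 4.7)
The plan is to compute the $\eta_T$-Fourier coefficient $\theta(\overline{f},\phi)_{\eta_T}$ of the theta lift directly via Proposition~\ref{prop-Fourier-coefficient-of-theta-lift} and to identify the result with the prescribed expression $C_T\cdot a_f(T;\phi_\fin;g_\fin)\cW_{-iT}(g_\infty)$ appearing in Theorem~\ref{prop-relation-of-Fourier-coefficients}. The strategy pins down $C_T$ as the product of explicit $T$-independent non-zero scalars.

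First, I would apply Proposition~\ref{prop-Fourier-coefficient-of-theta-lift} to write
$$\theta(\overline{f},\phi)_{\eta_T}(g) = \int_{\bfN_\YY(\AA)\backslash \bfH(\AA)} \omega(g,h)\phi(z_T)\,\overline{f}_{-\langle T,T\rangle}(h)\,dh,\qquad z_T=b_1\otimes w_- + T\otimes w_+.$$
Since $\phi=\phi_\fin\otimes\phi_\infty'$ with $\phi_\infty'=\mathcal{F}_{\bfU,\infty}(\phi_\infty)$, and since $\overline{f}_{-\langle T,T\rangle}$ factorizes as a product of its finite and archimedean Fourier coefficients, the integral splits as the product of a finite integral—which by direct comparison with \eqref{eq-Fourier-coefficient-nondegenerate} reproduces $a_f(T;\phi_\fin;g_\fin)$ up to a $T$-independent normalization $\kappa_\fin$—and an archimedean integral $J_\infty(T,g_\infty)$.

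To evaluate $J_\infty(T,g_\infty)$, I would use that $f$ is holomorphic of weight $N=2\ell+2-n$ with central character $\varepsilon_\infty(z)=z^{n+2}$; this forces $\overline{f}_{-\langle T,T\rangle,\infty}$ to equal $\kappa_\infty\cdot\mu_{-\langle T,T\rangle,\infty}$ for a constant $\kappa_\infty$ independent of $T$. Since $\omega_\infty$ commutes with $\mathcal{F}_{\bfU,\infty}$, one may rewrite $\omega_\infty(g_\infty,h)\phi_\infty'(z_T)$ as an integral over $z\in\CC$ of $\omega_\infty(g_\infty,h)\phi_\infty((T+zb_2)\otimes w_+)$ weighted by $\psi_{E,\infty}(z)$. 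Applying Fubini (justified by a convergence estimate in the style of Lemma~\ref{lemma-convergence-theta-lift}) and then Proposition~\ref{prop-archimedean-integral} to the inner integral—whose relevant norm is $\langle T+zb_2,T+zb_2\rangle=\langle T,T\rangle$ for every $z$—produces $C\cdot B_{\ell,T+zb_2}(g_\infty)=C\cdot A_\ell((T+zb_2)g_\infty)$. Because $\mathrm{pr}_2(b_1)=\mathrm{pr}_2(b_2)=\tfrac{1}{\sqrt{2}}u_1$, one has $A_\ell((T+zb_2)g_\infty)=A_\ell((T+zb_1)g_\infty)$, so (after a change of variable $z\mapsto -z$ reconciling $\psi_{E,\infty}$ with its conjugate) the outer integral in $z$ is precisely the Fourier transform studied in Lemma~\ref{main-technical-lemma}. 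That lemma delivers $J_\infty(T,g_\infty)=\kappa_\infty\cdot C\cdot C_{u_2,\ell}\cdot\cW_{-iT}(g_\infty)$.

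Combining the finite and archimedean pieces gives $\theta(\overline{f},\phi)_{\eta_T}(g)=(\kappa_\fin\kappa_\infty C\,C_{u_2,\ell})\cdot a_f(T;\phi_\fin;g_\fin)\cW_{-iT}(g_\infty)$, so comparison with the Fourier expansion of Theorem~\ref{prop-relation-of-Fourier-coefficients} identifies $C_T=\kappa_\fin\kappa_\infty C\,C_{u_2,\ell}$. Non-vanishing of each of the four factors (from Proposition~\ref{prop-archimedean-integral}, Lemma~\ref{main-technical-lemma}, and the standard normalization of Fourier coefficients of holomorphic modular forms on $\mathbf{H}$) forces $C_T\neq 0$, while manifest $T$-independence of each factor yields the claim. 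The main obstacle is careful bookkeeping of normalizations: the passage from the Schr\"odinger to the mixed model via $\mathcal{F}_{\bfU}$ introduces conjugate and sign conventions on $\psi_{E,\infty}$, the characters $\chi_{T,\infty}$ and $\eta_{T,\infty}$ must be identified via \eqref{relation-between-eta-chi}, and the identification $\overline{f}_{-\langle T,T\rangle,\infty}=\kappa_\infty\mu_{-\langle T,T\rangle,\infty}$ needs to be traced from the prescribed central character to confirm $T$-independence of $\kappa_\infty$.
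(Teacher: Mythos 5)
Your proposal is correct and follows essentially the same route as the paper: isolate the archimedean integral defining $C_T$, unfold the partial Fourier transform $\mathcal{F}_{\bfU,\infty}(\phi_\infty)(z_T)$ into an integral over $z\in\CC$, interchange the order of integration, identify the inner integral with $I_\infty(zb_2+T;t)=C\cdot A_\ell(zb_2+T)$ via Proposition~\ref{prop-archimedean-integral}, and evaluate the resulting Fourier transform of $A_\ell$ by Lemma~\ref{main-technical-lemma} to conclude $C_T=C\cdot C_{u_2,\ell}\neq 0$, independent of $T$. The one inaccuracy is your claim that $A_\ell((T+zb_2)g_\infty)=A_\ell((T+zb_1)g_\infty)$ for general $g_\infty$ (the difference $-\sqrt{2}\,z\,v_n g_\infty$ has nonzero $\mathrm{pr}_2$ in general, so this identity only holds at $g_\infty=1$), but this is harmless: by the multiplicity-one statement of Theorem~\ref{Theorem-Multiplicity-At-Most-1} the constant $C_T$ is pinned down by evaluating at $g_\infty=1$, which is exactly how the paper proceeds.
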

\begin{proof}
    By Proposition \ref{eq-Fourier-coeff-of-theta-lift}, we have 
\begin{equation*}
C_T= \cW_{-iT}(1)^{-1}\int_{\bfN_\mathbb{Y}(\RR)\backslash \bfH(\RR)} \omega_{\infty}(1,h) \mathcal{F}_{\bfU, \infty}(\phi_\infty)(z_{T})  \mu_{-t,\infty}(h) dh.
\end{equation*}
Applying Lemma~\ref{lemma-Weil-repn-archimedean} and the definition of the action of $\omega_\infty$ on $\mathcal{S}(\mathbb{X}(\RR))$, we have $\omega_{\infty}(1,k_\theta) \mathcal{F}_{\bfU, \infty}(\phi_\infty)=j(k_\theta, i)^{-N}\mathcal{F}_{\bfU, \infty}(\phi_\infty)$ for $k_\theta=\begin{pmatrix}\cos\theta & -\sin\theta\\ \sin\theta & \cos\theta\end{pmatrix}\in K_\infty^\prime$. Thus, by the Iwasawa decomposition $\mathbf{H}(\RR)=\bfN_\mathbb{Y}(\RR) \bfM_\mathbb{Y}(\RR)K_\infty^\prime$ and the fact that $\bfM_\mathbb{Y}(\RR) \cap K_\infty^\prime=\left\{ \begin{pmatrix} e^{it} & \\ & e^{it}\end{pmatrix}:t\in \mathbb{R} \right\}$, we have
\begin{equation*}
C_T= 2\pi\cW_{-iT}(1)^{-1} \int_{\bfM_\mathbb{Y}(\RR)} \delta_{\bfP_\mathbb{Y}}^{-1}( m)\omega_\infty(m)\mathcal{F}_{\bfU, \infty}(\phi_\infty)(z_{T})  \mu_{-t, \infty}(m)  dm.
\end{equation*}
By writing $m\in \bfM_\mathbb{Y}(\RR)$ as $m=m(a)=\begin{pmatrix} a & \\  &\overline{a}^{-1}\end{pmatrix}$ with $a\in \CC^\times$, we use the archimedean analogues of \eqref{eq-mixed-model-formula1} and \eqref{eq-Fourier-transform} to compute that
\begin{equation*}
\begin{split}
\omega_{\infty}\left( \begin{pmatrix} a & \\  &\overline{a}^{-1}\end{pmatrix}\right)\mathcal{F}_{\bfU, \infty}(\phi_\infty) (z_{T}) &=\left(\frac{a}{|a|}\right)^{n+2}(|a|^2)^{\frac{n}{2}} \mathcal{F}_{\bfU, \infty}(\phi_\infty) (b_1\otimes \overline{a}^{-1} w_-+ aT \otimes w_+)\\
&=a^{n+2}\vert a\vert^{-2}\int_\CC \phi_\infty(b_2\otimes z w_++aT\otimes w_+)\psi_{E,\infty}(za^{-1})dz \\
&= a^{n+2} \int_\CC \phi_\infty(b_2\otimes az w_++aT\otimes w_+)\psi_{E,\infty}(z)dz.
\end{split}
\end{equation*}
Thus, we obtain that $C_T$ is equal to 
\begin{equation*}
\begin{split}
& 2\pi\cW_{-iT}(1)^{-1} \int_{\CC^\times} \delta_{\bfP_\mathbb{Y}}^{-1}(m(a)) a^{n+2} \int_\CC \phi_\infty(b_2\otimes az w_++aT\otimes w_+)\psi_{E,\infty}(z)dz  \mu_{-t, \infty}(m(a))  d^\times a .
\end{split}
\end{equation*}
This double integral is absolutely convergent. Changing the order of integration gives
\begin{equation*}
\begin{split}
C_T=&\cW_{-iT}(1)^{-1} \int_{\CC} 2\pi \int_{\CC^\times} \delta_{\bfP_\mathbb{Y}}^{-1}(m(a)) a^{n+2}  \phi_\infty(b_2\otimes az w_+ +aT\otimes w_+) \mu_{-t, \infty}(m(a)) d^\times a \psi_{E,\infty}(z)dz \\
=&\cW_{-iT}(1)^{-1} \int_{\CC} \int_{\bfN_\mathbb{Y}(\RR)\backslash \bfH(\RR)} \omega_\infty(1,h)\phi_\infty( b_2\otimes z w_+ +T\otimes w_+) \mu_{-t, \infty}(h)dh \psi_{E,\infty}(z)dz.
\end{split}
\end{equation*}
The inner integral of the last formula is equal to the integral $I_\infty(zb_2+T;t)$ of \eqref{equation-archimedean-local-integral} evaluated at $g=1$. Hence by Proposition~\ref{prop-archimedean-integral}, there exists a constant $C\in \CC^{\times}$ (independent of $T$) such that 
$$
C_T=C\cW_{-iT}(1)^{-1}\cdot \int_{\CC}A_{\ell}(zb_2+T)\psi_{E,\infty}(z)dz.
$$  Applying Lemma \ref{main-technical-lemma} we conclude $C_T=C\cW_{-iT}(1)^{-1}\cdot C_{u_2, \ell}\mathcal{W}_{-iT}(1)=C\cdot C_{u_2,\ell}\neq 0$ as required.
\end{proof}
\subsection{Algebraicity of Fourier Coefficients}

We may now complete the proof of Theorem \ref{thm3-intro}. 
\begin{thm}
\label{thm-algebraicity}
Suppose $\ell>n+1$. Let $f$ be the automorphic function on $\mathbf{H}(\AA)$ associated to a weight $N=2\ell+2-n$ cuspidal holomorphic modular form $f$ on $\cH_{1,1}$. Assume $f$ has central character $\varepsilon=\prod_{v\leq \infty} \varepsilon_v$ where $\varepsilon_{\infty}(z)=\overline{z}^{n+2}$, and suppose that the functions $a_f(t)\colon \bfH(\AA_{\mathrm{fin}})\to \CC$ are valued in a single algebraic extension $L/\QQ$ for all $t>0$. Then there exists $\phi_\fin\in \mathcal{S}(\mathbb{X}(\AA_\fin))$ such that $\theta(f, \phi_\fin\otimes\phi_\infty)$ is a non-zero quaternionic cusp form on $\bfG$ with Fourier coefficients in $L(\mu_{\infty})$. Here $L(\mu_{\infty})/L$ denotes the extension obtained by adjoining all roots of unity to $L$.
\end{thm}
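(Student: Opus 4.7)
The plan is to reduce the algebraicity assertion to a finite-sum computation by combining Theorem~\ref{prop-relation-of-Fourier-coefficients} and Proposition~\ref{prop-Fourier-transform-A_l} with the explicit formulas for the Weil representation. Since the constant $C_T$ appearing in the Fourier expansion of $\theta(f,\phi_\fin\otimes\phi_\infty)_{\bfZ}$ is a single non-zero complex number $C_0=C\cdot C_{u_2,\ell}$ independent of $T$, the entire transcendental obstruction to algebraicity lives in this overall constant and can be absorbed into the choice of Schwartz data. More precisely, if I find a $\phi_0\in \mathcal{S}(\XX(\AA_\fin))$ taking values in $\QQ(\mu_\infty)$ (say, a $\QQ$-linear combination of characteristic functions of compact open subsets defined over $\QQ$) such that $\theta(f,\phi_0\otimes\phi_\infty)\not\equiv 0$ and such that each $a_f(T;\phi_0;g_\fin)\in L(\mu_\infty)$, then setting $\phi_\fin:=C_0^{-1}\phi_0$ will give a non-zero cuspidal quaternionic modular form on $\bfG$ whose Fourier coefficients are exactly $a_f(T;\phi_0;g_\fin)\in L(\mu_\infty)$, as required.

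For the non-vanishing I would invoke Proposition~\ref{prop-nonvanishing-theta-lift} together with the fact that $\mathcal{S}(\XX(\AA_\fin))$ is $\CC$-spanned by $\{0,1\}$-valued characteristic functions of rationally-defined compact open subsets. By $\CC$-linearity of the theta map in $\phi_\fin$, at least one such $\phi_0$ must produce a non-vanishing lift. The Fourier integral
\begin{equation*}
a_f(T;\phi_0;g_\fin)=\int_{\bfN_\YY(\AA_\fin)\backslash \bfH(\AA_\fin)}\omega(g_\fin,h)\phi_0(b_1\otimes w_-+T\otimes w_+)\,a_f(-\langle T,T\rangle)(h)\,dh
\end{equation*}
then collapses to a finite $\CC$-linear combination of products $\omega(g_\fin,h_i)\phi_0(b_1\otimes w_-+T\otimes w_+)\cdot a_f(-\langle T,T\rangle)(h_i)$, because $\omega(g_\fin,\cdot)\phi_0$ is compactly supported modulo $\bfN_\YY(\AA_\fin)$, while $h\mapsto a_f(t)(h)$ is locally constant on $\bfH(\AA_\fin)$.

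It remains to verify that each such summand lies in $L(\mu_\infty)$. The values $a_f(t)(h_i)\in L$ by hypothesis. The Weil values $\omega(g_\fin,h_i)\phi_0(b_1\otimes w_-+T\otimes w_+)$ can be computed by iterating the mixed-model formulas of Lemma~\ref{lemma-mixed-model-formula} together with a Bruhat decomposition for the Weyl element of $\bfH(\AA_\fin)$; every elementary factor is either a value of $\psi_E$ (a root of unity), a value of the splitting character $\chi$ (which can be chosen of finite order since the quadratic character of $E/\QQ$ admits a finite-order Hecke character extension to $\AA_E^\times/E^\times$), or an adelic norm factor $|a|_{\AA_E}^{k/2}$ with $a\in E^\times$. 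Quadratic Gauss sums ensure $\sqrt{p}\in \QQ(\mu_\infty)$ for every rational prime $p$, so every such norm factor already lies in $\QQ(\mu_\infty)$; consequently $a_f(T;\phi_0;g_\fin)\in L(\mu_\infty)$.

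The main obstacle is the bookkeeping in this last step: one must track the Weil index $\gamma(\psi,\bfV)$ and the various normalizing constants arising in the Bruhat expansion of the Weyl element to confirm that nothing transcendental creeps in at any finite place, and must verify that the auxiliary character $\chi$ really can be chosen of finite order satisfying the required splitting condition of \cite{GR91}. Both are standard but somewhat delicate; everything else in the argument is essentially formal manipulation based on the structural results already established.
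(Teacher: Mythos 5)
Your overall strategy coincides with the paper's: reduce via Theorem~\ref{prop-relation-of-Fourier-coefficients} and Proposition~\ref{prop-Fourier-transform-A_l} to the finite-place integral \eqref{eq-Fourier-coefficient-nondegenerate}, and show that this integral collapses to a finite sum of values of $\omega(\cdot)\phi_\fin$ and of the coefficients $a_f(t)$. Making the rescaling $\phi_\fin:=C_0^{-1}\phi_0$ explicit is a worthwhile clarification (the paper absorbs this into the phrase ``there exists $\phi_\fin$''). Two remarks on the middle step. First, the assertion that $h\mapsto \omega(g_\fin,h)\phi_0(b_1\otimes w_-+T\otimes w_+)$ is compactly supported modulo $\bfN_\YY(\AA_\fin)$ is exactly where the paper does its work: after the Iwasawa decomposition one reduces to $\bfM_\YY(\AA_\fin)$, and the mixed-model formula \eqref{eq-mixed-model-formula1} together with the support of $\phi_1\otimes\phi_2$ forces $a,\overline{a}^{-1}\in t^{-1}\hat{\mathcal{O}_E}$, hence finitely many $U_{\bfM_\YY}$-cosets; this needs to be argued, not asserted. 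Second, your Bruhat expansion of the Weyl element is unnecessary: after Iwasawa only $\bfM_\YY(\AA_\fin)$ and $\bfH(\hat{\ZZ})$ act on $\phi_0$, the $\bfH(\hat{\ZZ})$-orbit of $\phi_0$ is finite by smoothness, and so no Fourier transforms or Weil indices $\gamma(\psi,\bfV)$ ever enter the computation.

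There is, however, one genuinely false claim in your algebraicity bookkeeping: the splitting character $\chi$ cannot be chosen of finite order. Since $E$ is imaginary quadratic, $E_\infty^\times=\CC^\times$ is connected, so every finite-order Hecke character of $E$ has trivial archimedean component, and its restriction to $\AA^\times$ is then trivial on the sign at the real place; but $\chi\rvert_{\AA^\times}$ is required to be the quadratic character attached to $E/\QQ$, whose archimedean component is the sign character. (Indeed, the archimedean computations of \S\ref{section-theta-lifting} use $\chi_\infty(a)=a/|a|$, which is not of finite order.) The claim can be repaired by taking $\chi$ to be the unitarization of an algebraic Hecke character, whose values on $\AA_{E,\fin}^\times$ are algebraic numbers times half-integral powers of rational primes (the latter lying in $\QQ(\mu_\infty)$); with such a choice each of the finitely many summands is algebraic. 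Note that the paper's own proof is silent on this point --- it only establishes the finite-sum statement --- so your attempt to pin down the values of $\chi$ goes beyond the paper, but as written the justification is wrong, and pinning the coefficients down to $L(\mu_\infty)$ rather than $\overline{\QQ}$ would require an additional argument about the field generated by the values of $\chi_\fin$.
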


\begin{proof}
Recall that the non-degenerate Fourier coefficients of $\theta(f, \phi)$ are given by \eqref{eq-Fourier-coefficient-nondegenerate}. It suffices to show that the integral in \eqref{eq-Fourier-coefficient-nondegenerate} is a finite sum. As functions of $h\in \bfH(\AA_\fin)$, both $\omega(h)\phi(b_1\otimes w_- +v_0\otimes w_+)$ and $\overline{a_f(\langle v_0, v_0\rangle)(h)}$ are right invariant under a compact open subgroup $U_0$ of the maximal compact open subgroup $\bfH(\hat{\mathbb{Z}})$ of $\bfH(\AA_\fin)$. Using the Iwasawa decomposition, we obtain
\begin{equation}
\label{eq-Fourier-coefficient-nondegenerate-1}
\begin{split}
a(v_0;\phi;f) = \int\limits_{\bfM_\mathbb{Y}(\AA_\fin) } \int\limits_{\bfH(\hat{\mathbb{Z}})}    \omega(kh)\phi_\fin(b_1\otimes w_- +v_0\otimes w_+)\overline{a_f(\langle v_0, v_0\rangle)(kh)} dkdh .
\end{split}
\end{equation}
Let $\{k_1, \cdots, k_m\}$ be a set of representatives of $U_0\backslash \bfH(\hat{\mathbb{Z}})$. Then \eqref{eq-Fourier-coefficient-nondegenerate-1} 
becomes
\begin{equation*}
\begin{split}   
a(v_0;\phi;f) = \sum_{j=1}^m \mathrm{vol}(U_0) \int\limits_{\bfM_\mathbb{Y}(\AA_\fin) } \omega(h) \omega(k_j)\phi_{\mathrm{fin}}(b_1\otimes w_- +v_0\otimes w_+)\overline{a_f(\langle v_0, v_0\rangle)(h k_j)} dh  
\end{split}
\end{equation*}
Thus $a(v_0;\phi;f)$ is a finite sum of integrals of the form
\begin{equation}
\label{eq-Fourier-coefficient-nondegenerate-2}
    \int\limits_{\bfM_\mathbb{Y}(\AA_\fin) } \omega(h) \phi_{\mathrm{fin}}(b_1\otimes w_- +v_0\otimes w_+)\overline{a_f(\langle v_0, v_0\rangle)(h)} dh.
\end{equation}
It suffices to show that \eqref{eq-Fourier-coefficient-nondegenerate-2} is a finite sum.

Write $h\in \bfM_\mathbb{Y}(\AA_\fin)$ as $h=\diag(a, \overline{a}^{-1})$ with $a\in \AA_{E,\fin}^\times$. Let $U_{\bfM_\mathbb{Y}}=U\cap \bfM_\mathbb{Y}(\AA_\fin)$. We claim that the set of $\diag(a, \overline{a}^{-1})$ with 
\begin{equation*}
    \omega\left( \begin{pmatrix} a & \\ & \overline{a}^{-1}\end{pmatrix}\right) \phi_\fin(b_1\otimes w_- +v_0\otimes w_+) =\chi(a)^{2+n} |a|_{\AA}^{\frac{n}{2}} \phi_\fin(b_1\otimes w_- \overline{a}^{-1}+v_0a\otimes w_+) \not=0
\end{equation*}
has a finite number of $U_{\bfM_\mathbb{Y}}$ cosets. Without loss of generality we may assume that
\begin{equation*}
    \phi_\fin(b_1\otimes w_- \overline{a}^{-1}+v_0a\otimes w_+)=\phi_1(b_1\otimes w_- \overline{a}^{-1}) \phi_2(v_0a\otimes w_+),
\end{equation*}
where $\phi_1$ and $\phi_2$ are characteristic functions on $(b_1\otimes \mathbb{Y})(\AA_\fin)$ and $(\bfV_0\otimes w_+)(\AA_\fin)$ respectively. These conditions on $\phi_1$ and $\phi_2$ imply that there is some $t\in \GL_1(\AA_{E,\fin})\cap \hat{\mathcal{O}_E}$ such that $\overline{a}^{-1}\in t^{-1} \hat{\mathcal{O}_E}$ and $a\in t^{-1} \hat{\mathcal{O}_E}$, where $\hat{\mathcal{O}_E}$ is the maximal compact subgroup of $\AA_{E,\fin}$. Hence, there is a finite number of $U_{\bfM_\mathbb{Y}}$ cosets. This completes the proof.  
\end{proof}

\goodbreak 

\printbibliography
\end{document}